\newtheorem{lemma}{Lemma}[section]
\newtheorem{remark}{Remark}[section]
\newtheorem{proposition}{Proposition}[section]
\newtheorem{theorem}{Theorem}[section]
\newtheorem{corollary}{Corollary}[section]
\newtheorem{problem}{Problem}
\def\bA{\mathbf{A}}
\def\bu{\mathbf{u}}
\def\bv{\mathbf{v}}
\def\bw{\mathbf{w}}
\def\bff{\mathbf{f}}
\def\beps{\boldsymbol{\varepsilon}}
\def\bO{\boldsymbol{0}}
\def\bp{\mathbf{p}}
\def\bta{\boldsymbol{\tau}}
\def\bpi{\boldsymbol{\Pi}}
\def\bI{\boldsymbol{I}}
\def\bsi{\boldsymbol{\sigma}}
\def\bta{\boldsymbol{\tau}}
\def\be{\boldsymbol{e}}
\def\bP{\mathbf{P}}
\def\bV{\boldsymbol{\mathcal{V}}}
\def\bW{\boldsymbol{\mathcal{W}}}
\def\CE{\mathcal{E}}
\def\CS{\mathcal{S}}
\def\CT{\mathcal{T}}
\def\Cten{\boldsymbol{\mathcal{C}}}
\def\dim{\mathop{\mathrm{\,dim}}\nolimits}
\def\disp{\displaystyle}
\def\Div{\mathop{\mathbf{div}}\nolimits}
\def\G{\Gamma}
\def\hCT{\widehat{\CT}}
\def\hdel{\widehat{\delta}}
\def\H{\mathrm{H}}
\def\HuO{{\H^1(\O)}}
\def\HurO{{\H^{1+r}(\O)}}
\def\l{\lambda}
\def\LO{{\mathrm{L}^2(\O)}}
\def\N{{\mathbb{N}}}
\def\O{\Omega}
\def\PiE{\Pi_{\varepsilon}^E}
\def\bpisi{\boldsymbol{\PiE}}
\def\R{{\mathbb{R}}}
\def\tE{\widetilde{\CE}}
\def\tr{\mathop{\mathrm{tr}}\nolimits}
\def\T{{\mathcal T}}
\def\vv{v}
\def\PiO{\Pi_{0}^{E}}
\def\bpiO{\boldsymbol{\PiO}}
\renewcommand\H{\mathrm{H}}
\newcommand\0{\boldsymbol{0}}
\newcommand\bn{\boldsymbol{n}}
\newcommand\bT{\mathbf{T}}
\renewcommand\sp{\mathop{\mathrm{sp}}\nolimits}
\newcommand\bbP{\mathbb{P}}
\def\tr{\mathop{\mathrm{tr}}\nolimits}
\journal{}
\date{\today}
\begin{document}
\begin{frontmatter}

\title{A priori and a posteriori error estimates for
a virtual element spectral analysis for the elasticity equations}

\author[1,2]{David Mora}
\ead{dmora@ubiobio.cl}
\address[1]{Departamento de Matem\'atica,
Universidad del B\'io-B\'io, Casilla 5-C, Concepci\'on, Chile.}
\address[2]{Centro de Investigaci\'on en Ingenier\'ia Matem\'atica
(CI$^2$MA), Universidad de Concepci\'on, Concepci\'on, Chile.}
\author[3]{Gonzalo Rivera}
\ead{gonzalo.rivera@ulagos.cl}
\address[3]{Departamento de Ciencias Exactas,
Universidad de Los Lagos, Casilla 933, Osorno, Chile.}

\begin{abstract} 
We present a priori and a posteriori error analysis
of a Virtual Element Method (VEM) to
approximate the vibration frequencies and modes of an elastic solid.
We analyze a variational formulation relying only on the solid displacement
and propose an $H^{1}(\O)$-conforming discretization by means of VEM.
Under standard assumptions on the computational domain, we show that
the resulting scheme provides a correct approximation of the spectrum
and prove an optimal order error estimate for the eigenfunctions and a
double order for the eigenvalues. Since, the VEM
has the advantage of using general polygonal meshes,
which allows implementing efficiently mesh refinement strategies,
we also introduce a residual-type
a posteriori error estimator and prove its reliability and efficiency.
We use the corresponding error estimator to drive an adaptive scheme.
Finally, we report the results of a  couple of numerical tests
that allow us to assess the performance of this approach.


\end{abstract}

\begin{keyword} 
virtual element method 
\sep  elasticity equations
\sep eigenvalue problem
\sep a priori error estimates
\sep a posteriori error analysis
\sep polygonal meshes

\MSC 65N25 \sep 65N30 \sep 70J30  \sep 76M25.
\end{keyword}

\date{\today}
\end{frontmatter}


\setcounter{equation}{0}
\section{Introduction}
\label{SEC:INTR}


We analyze in this paper a {\it Virtual Element Method}
for an eigenvalue problem arising in linear elasticity.
The Virtual Element Method (VEM), recently introduced in
\cite{BBCMMR2013,BBMR2014}, is a generalization of the
Finite Element Method, which is characterized by the capability
of dealing with very general polygonal/polyhedral meshes. 
In recent years, the interest in numerical methods that can make
use of general polygonal/polyhedral meshes for the numerical
solution of partial differential equations has undergone a significant growth;
this because of the high flexibility that this kind of meshes allow
in the treatment of complex geometries. Among the large number of
papers on this subject, we cite as a minimal
sample~\cite{BLMbook2014,CGH14,DPECMAME2015,DPECRAS2015,ST04,TPPM10}.

Although  VEM  is very recent, it has been applied to a large
number of  problems; for instance, to Stokes, Brinkman,
Cahn-Hilliard, plates bending, advection-diffusion,
Helmholtz, parabolic, and hyperbolic
problems have been introduced in
\cite{ABMV2014,ABSV2016,BLV-M2AN,BMR2016,BM12,BBBPS2016,CG16,CGS17,ChM-camwa,PPR15,vacca1,V-m3as18,vacca2}. Regarding 
VEM for linear and non-linear
elasticity we mention \cite{BBM,BLM2015,Paulino-VEM,WRR2016},  for spectral problems \cite{BMRR,GVXX,MRR2015,MRV},
whereas a posteriori error analysis for VEM have been developed
in \cite{BMm2as,BeBo2017,CGPS,MRR2}.





The numerical approximation of eigenvalue problems for partial
differential equations is object
of great interest from both, the practical and theoretical points of
view, since they appear in many applications. We refer to \cite{Boffi,BGG2012}
and the references therein for the state of the art in this subject area.
In particular, this paper focus on the approximation by VEM of the
vibration frequencies and modes of an elastic solid. One motivation for
considering this problem is that it constitutes a stepping stone towards
the more challenging goal of devising virtual element spectral
approximations for coupled systems involving fluid-structure
interaction, which arises in many engineering problems
(see \cite{BGHRS2008} for a thorough discussion on this topic).
Among the existing techniques to solve this problem,
various finite element methods have been proposed
and analyzed in different frameworks for
instance in the following references~\cite{BO,BHPR2001,Hernadez2009,MMR2013}.


On the other hand, in numerical computations it is important
to use adaptive mesh refinement strategies based on a posteriori
error indicators. For instance, they guarantee achieving errors
below a tolerance with a reasonable computer cost in presence of
singular solutions. Several approaches have been
considered to construct error estimators based on the residual
equations (see \cite{ATO,Verfurth} and the references therein).
Due to the large flexibility of the meshes to which the
VEM is applied, mesh adaptivity becomes an
appealing feature since mesh refinement strategies
can be implemented very efficiently. However, the design and
analysis of a posteriori error bounds for the VEM is a challenging task.
References~\cite{BMm2as,BeBo2017,CGPS,MRR2} are the only a posteriori
error analyses for VEM currently available in the literature.
In \cite{BMm2as}, a posteriori error bounds for the $C^{1}$-conforming VEM
for the two-dimensional Poisson problem are proposed. In
\cite{BeBo2017}  a residual-based
a posteriori error estimator for the VEM discretization of the
Poisson problem with discontinuous diffusivity coefficient has been introduced and analyzed. Moreover,
in \cite{CGPS}, a posteriori error bounds are introduced for the $C^{0}$-conforming
VEM  for the discretization of second order linear elliptic reaction-convection-diffusion
problems with non-constant coefficients in two and three dimensions.
Finally, in \cite{MRR2} 
a posteriori error analysis of a virtual
element method for the Steklov eigenvalue problem has been developed.

The aim of this paper is to introduce and analyze an $\H^{1}(\O)$-VEM
that applies to general polygonal meshes, made by possibly non-convex elements,
for the two-dimensional eigenvalue problem for the linear elasticity equations.
We begin with a variational formulation of the spectral problem relying only
on the solid displacement. Then, we propose a discretization by means of VEM,
which is based on \cite{equiv} in order to construct a proper $\mathrm{L}^2$-projection operator,
which is used to approximate the bilinear form on the right hand side
of the spectral problem.
Then, we use the so-called Babu\v ska--Osborn abstract spectral approximation
theory (see \cite{BO}) to deal with the continuous and discrete solutions operators
which appear as the solution of the continuous and discrete source
problems and whose spectra are related with the solutions of the
spectral problem. 
Under rather mild assumptions on the polygonal meshes, we establish that the
resulting VEM scheme provides a correct approximation of the spectrum and
prove optimal-order error estimates for the eigenfunctions and a double order
for the eigenvalues. The second goal of this paper is to  introduce and
analyze an a posteriori error estimator of residual type for the
virtual element approximation of the eigenvalue problem.
Since  normal fluxes of the VEM solution are not computable,
they  will be replaced in the estimators by a proper projection.
We prove that the error estimator is equivalent to the error and use the
corresponding indicator to drive an adaptive scheme.
In addition, in this work we address the issue of 
comparing the proposed a posteriori error estimator
with the standard residual estimator for a finite
element method.

The outline of this article is as follows: We introduce in
Section~\ref{SEC:STAT} the variational formulation of the
spectral problem, define a solution operator and establish
its spectral characterization. In Section~\ref{SEC:Discrete},
we introduce the virtual element discrete formulation, describe
the spectrum of a discrete solution operator and establish some
auxiliary results. In Section~\ref{SEC:approximation}, we prove
that the numerical scheme provides a correct spectral approximation
and establish optimal order error estimates for the eigenvalues
and eigenfunctions using the standard theory for compact operators.
In Section~\ref{SEC:L2}, we establish an
error estimate for the eigenfunctions in the $\mathrm{L}^{2}(\O)$-norm,
which will be useful in the a posteriori error analysis.
In Section~\ref{SEC:aposteriori1}, we define the a posteriori
error estimator and proved its reliability and efficiency.
Finally, in Section~\ref{SEC:NUMER}, we report a set of numerical
tests that allow us to assess the convergence properties of the method,
to confirm that it is not polluted with spurious modes and to check
that the experimental rates of convergence agree with the theoretical ones.
Moreover, we have also made a comparison between the proposed estimator
and the standard residual error estimator for a finite element method,

Throughout the article, $\O$ is a generic Lipschitz bounded domain of $\R^2$
with boundary $\partial \O$,
we will use standard notations for Sobolev spaces, norms and seminorms.  Finally,
we employ $\0$ to denote a generic null vector and
$C$ to denote generic constants independent of the discretization
parameters $h$, which may take different values at different occurrences.

\setcounter{equation}{0}
\section{The spectral problem}
\label{SEC:STAT}
We assume that an isotropic and linearly elastic solid occupies
a bounded and connected Lipschitz domain $\O\subset \R^{2}$.
We assume that the  boundary of the solid $\partial \O$ admits
a disjoint partition $\partial \O=\G_{D}\cup\G_{N}$,
the structure being fixed on $\G_{D}$ and free of stress on $\G_{N}$.
We denote by $\boldsymbol{\nu}$ the outward unit normal
vector to the boundary $\partial \O$.
Let us consider the eigenvalue problem for the linear
elasticity equation in $\O$ with mixed boundary conditions,
written in the variational form:

\begin{problem}
\label{P0}
Find $(\l,\bw)\in \R\times \bV:=[\H^{1}_{\G_{D}}(\O)]^{2}$, $\bw\neq \bO$, such that 
\begin{equation*}
\int_{\O}\bsi(\bw):\beps(\bv)=\l\int_{\O}\varrho\bw\cdot\bv\quad
\forall\bv\in\bV,
\end{equation*}
\end{problem}
\noindent where $\bw$ is the solid displacement and $\omega:=\sqrt{\l}$
is the corresponding vibration frequency; $\varrho$
is the density of the material, which we assume a strictly positive constant.
The constitutive equation relating the Cauchy stress tensor
$\bsi$ and the displacement field $\bw$ is given by 
$$\bsi(\bw)=\Cten\beps(\bw)\quad \text{ in }\O,$$
with $\beps(\bw):=\dfrac{1}{2}\left(\nabla \bw+(\nabla \bw)^{t}\right)$
being the standard strain tensor and $\Cten$  the elasticity operator,
which we assume given by Hooke's law, i.e., 
$$\Cten\bta:=2\mu_{S}\bta+\l_{S}\tr(\bta)\bI,$$
where $\l_{S}$ and $\mu_{S}$ are the Lam\'e coefficients,
which we assume constant.

%

We introduce the following bounded bilinear forms:
\begin{align*}
a(\bw,\bv)&:=\int_{\O}\Cten\beps(\bw):\beps(\bv),\qquad\bw,\bv\in \bV,\\
b(\bw,\bv)&:=\int_{\O}\varrho\bw\cdot\bv,\qquad\bw,\bv\in \bV.
\end{align*}
Then, the eigenvalue problem above can be rewritten as follows:
\begin{problem}
\label{P1}
Find $(\l,\bw)\in \R\times\bV$, $\bw\neq \bO$, such that 
\begin{equation*}
a(\bw,\bv)= \l b(\bw,\bv) \qquad \forall \bv\in \bV.
\end{equation*}
\end{problem}
It is easy to check (as a consequence of the Korn inequality)
that $a(\bv,\bv)\geq C\|\bv\|_{1,\O}^{2}$ for all $\bv\in \bV$.
Then, the bilinear form $a(\cdot,\cdot)$ is $\bV$-elliptic.

Next, we define the corresponding solution operator:
\begin{align*}
\bT:\bV & \longrightarrow \bV,\\
\bff & \longmapsto \bT\bff:=\bu,
\end{align*}
where $\bu\in \bV$ is the unique solution of the following source problem:
\begin{equation}\label{4}
a(\bu,\bv)=b(\bff,\bv)\qquad\forall \bv\in \bV.
\end{equation}
Thus, the linear operator $\bT$
is well defined and bounded. Notice that $(\l,\bw)\in\R\times\bV$ solves
Problem~\ref{P1} if and only if  $(\mu,\bw)$ is an eigenpair of $\bT$,
i.e, if and only if  $$\bT\bw=\mu\bw\quad \text{ with } \mu:=\dfrac{1}{\l}.$$
Moreover, it is easy to check that $\mathbf{T}$ is self-adjoint
with respect to the inner product $a(\cdot,\cdot)$ in $\bV$.

The following is an additional regularity result for the solution of
problem~\eqref{4} and consequently, for the eigenfunctions of $\bT$.

\begin{lemma}
\label{LEM:REG}
There exists $r_{\O}>0$ such that the following results hold:
\begin{itemize}
\item[(i)] for all $\bff\in[\LO]^{2}$ and for all $r\in(0,r_{\O})$,
the solution $\bu$ of problem~\eqref{4} satisfies $\bu\in[\H^{1+r_1}(\O)]^{2}$
with $r_1:=\min\{r,1\}$ and there exists $C>0$ such that 
$$
\left\|\bu\right\|_{1+r_1,\O}
\le C\left\|\bff\right\|_{0,\O}.
$$
\item[(ii)] if $\bw$ is an eigenfunction
of Problem~\ref{P1} with eigenvalue $\l$,
for all $r\in(0,r_{\O})$, $\bw\in[\HurO]^{2}$
and there exists $C>0$ (depending on $\l$) such that 
$$
\left\|\bw\right\|_{1+r,\O}
\le C\left\|\bw\right\|_{0,\O}.
$$
\end{itemize}
\end{lemma}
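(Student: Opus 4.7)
The plan is to invoke classical elliptic shift theorems for the Lamé system with mixed boundary conditions, together with a bootstrap argument to handle the eigenfunctions.

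For part (i), observe that $\bu$ solves, in the distributional sense, the system $-\Div(\Cten\beps(\bu))=\varrho\bff$ in $\O$, with $\bu=\bO$ on $\G_D$ and $\bsi(\bu)\boldsymbol{\nu}=\bO$ on $\G_N$. This is a second-order strongly elliptic system in divergence form with constant coefficients, whose coercivity in $\bV$ has already been established via Korn's inequality. Under the standing Lipschitz hypothesis on $\partial\O$ and the disjoint partition $\partial\O=\G_D\cup\G_N$, classical shift theorems for such problems (for the polygonal setting one may cite Grisvard, Kondratiev, Dauge, and Nicaise; for the purely Lipschitz setting, Savaré and Costabel--Dauge) provide a critical regularity index $r_\O>0$, depending only on the geometry of $\O$ and on the location of the Dirichlet--Neumann transition points, such that the solution operator of \eqref{4} maps $[\LO]^2$ continuously into $[\H^{1+r_1}(\O)]^2$ for every $r\in(0,r_\O)$, with $r_1=\min\{r,1\}$. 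The cap at one is natural because $L^2$ data cannot in general yield more than $H^2$-regularity on non-smooth domains. This produces the desired bound with $C$ independent of $\bff$.

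For part (ii), I would bootstrap. An eigenfunction $\bw\in\bV\subset[\LO]^2$ satisfies \eqref{4} with datum $\bff=\l\bw$, so a first application of (i) gives $\bw\in[\H^{1+r_1}(\O)]^2$ together with $\|\bw\|_{1+r_1,\O}\le C\l\|\bw\|_{0,\O}$. If $r_\O\le 1$, picking $r$ close enough to $r_\O$ already concludes the proof. If instead $r_\O>1$ and the target index satisfies $r\in[1,r_\O)$, the same boundary value problem admits a shift theorem at higher Sobolev regularity, so once the datum lies in $[\H^{s}(\O)]^2$ for a suitable $s>0$ the solution gains extra regularity; a finite number of iterations raises $\bw$ all the way into $[\HurO]^2$ for any prescribed $r<r_\O$. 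The constant absorbs the finitely many powers of $\l$ accumulated along the bootstrap, which accounts for the $\l$-dependence stated in the bound.

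The main technical obstacle is the precise identification of $r_\O$ and the validity of the shift theorem at Dirichlet--Neumann transition points, where corner-like singularities typically arise even on smooth portions of $\partial\O$. This is a well-documented issue for the two-dimensional Lamé system on polygonal and Lipschitz domains, and I would simply cite the corresponding results rather than reprove them. A minor secondary point is to ensure that each step of the bootstrap in (ii) stays within the admissible range of the shift theorem, which is guaranteed by the strict inequality $r<r_\O$ and by taking the increment of regularity at each iteration to be small enough.
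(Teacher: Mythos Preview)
Your proposal is correct and follows the same approach as the paper, which simply cites the classical regularity theory for the Lam\'e system (Grisvard~\cite{G2}) in a single sentence. Your write-up is considerably more detailed---in particular, you make explicit the bootstrap needed for part~(ii) when $r_\O>1$, which the paper leaves implicit in its one-line citation---but the underlying idea is identical.
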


\begin{proof}
The proof  follows from the regularity
result for the classical elasticity problem
(cf. \cite{G2}).
\end{proof}

Hence, because of the compact inclusion $[\H^{1+r_{1}}(\O)]^{2}\hookrightarrow[\HuO]^{2}$, $\bT$ is a
compact operator. Therefore, we have the following spectral
characterization result.

\begin{theorem}
\label{CHAR_SP}
The spectrum of $\bT$  satisfies 
$\sp(\bT)=\{0\}\cup\left\{\mu_k\right\}_{k\in\N}$, where
 $\left\{\mu_k\right\}_{k\in\N}$ is a
sequence of real positive eigenvalues which converges to $0$.
The multiplicity of each eigenvalue is finite and
their corresponding eigenspaces lie in $[\HurO]^{2}$.
\end{theorem}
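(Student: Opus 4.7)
The plan is to invoke the spectral theorem for compact, self-adjoint, positive operators on a Hilbert space. The space $\bV$, endowed with the inner product $a(\cdot,\cdot)$ (which is equivalent to the usual $[\HuO]^2$ inner product by Korn's inequality and the $\bV$-ellipticity of $a$ recorded above), is a Hilbert space. The paper has already noted that $\bT$ is self-adjoint with respect to $a(\cdot,\cdot)$, so the remaining ingredients to verify are compactness, strict positivity, and infinite-dimensionality of the range.

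First I would establish compactness of $\bT$. Given a bounded sequence $\{\bff_n\}\subset\bV$, the continuous embedding $[\HuO]^2\hookrightarrow[\LO]^2$ makes it bounded in $[\LO]^2$. Applying Lemma~\ref{LEM:REG}(i), the image sequence $\{\bT\bff_n\}$ is bounded in $[\H^{1+r_1}(\O)]^2$, and the compact embedding $[\H^{1+r_1}(\O)]^2\hookrightarrow[\HuO]^2$ furnished by Rellich-Kondrachov extracts a convergent subsequence in $\bV$. Hence $\bT$ is compact.

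Second, I would verify positivity and injectivity from the defining identity $a(\bT\bff,\bff)=b(\bff,\bff)=\int_\O\varrho|\bff|^2$ together with $\varrho>0$: this shows $a(\bT\bff,\bff)>0$ whenever $\bff\neq\bO$, hence every nonzero eigenvalue of $\bT$ is strictly positive, and simultaneously $\bT\bff=\bO$ forces $b(\bff,\bff)=0$ and therefore $\bff=\bO$. Since $\bV$ is infinite-dimensional and $\bT$ is injective, the range of $\bT$ cannot be finite-dimensional, so $\bT$ has infinitely many nonzero eigenvalues.

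Putting the pieces together, the Hilbert-Schmidt theorem for compact self-adjoint operators then yields the decomposition $\sp(\bT)=\{0\}\cup\{\mu_k\}_{k\in\N}$ with real eigenvalues of finite multiplicity accumulating only at zero; positivity upgrades each $\mu_k$ to $\mu_k>0$. The regularity claim that the eigenspaces lie in $[\HurO]^2$ is then a direct application of Lemma~\ref{LEM:REG}(ii) to any eigenfunction $\bw$ satisfying $\bT\bw=\mu_k\bw$. There is no substantive obstacle here beyond the bookkeeping above, since every structural fact (self-adjointness, Korn ellipticity, extra regularity of $\bu=\bT\bff$) has already been supplied.
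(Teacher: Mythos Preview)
Your proposal is correct and follows the same approach as the paper, which simply notes (immediately before the theorem) that $\bT$ is compact via the inclusion $[\H^{1+r_1}(\O)]^2\hookrightarrow[\HuO]^2$ and then states the spectral characterization without further detail. You have merely filled in the standard bookkeeping (positivity, injectivity, Hilbert--Schmidt, and the invocation of Lemma~\ref{LEM:REG}(ii)) that the paper leaves implicit.
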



\setcounter{equation}{0}
\section{Virtual elements discretization}
\label{SEC:Discrete}

We begin this section, by recalling the mesh construction and
the shape regularity assumptions to introduce the discrete virtual element
space. Then, we will introduce a virtual element discretization of
Problem~\ref{P1}  and provide a spectral characterization
of the resulting discrete eigenvalue problem.
Let $\left\{\CT_h\right\}_h$ be a sequence of decompositions of $\O$ 
into polygons $E$. Let $h_E$ denote the diameter of the element $E$
and $h:=\displaystyle\max_{E\in\O}h_E$.
In what follows, we denote by $N_E$ the number of vertices of
$E$, and by $\ell$ a generic edge of $\CT_h$.

For the analysis, we will make the following
assumptions as in  \cite{BMRR}:
there exists a positive real number $C_{\T}$ such that,
for every $h$ and every $E\in \T_h$,
\begin{itemize}
\item[$\mathbf{A_1}$:]  the ratio between the shortest edge
and the diameter $h_E$ of $E$ is larger than $C_{\T}$;
\item[$\mathbf{A_2}$:]  $E\in\CT_h$ is star-shaped with
respect to every point of a  ball
of radius $C_{\T}h_E$.
\end{itemize}
Moreover, for any subset $S\subseteq\R^2$ and nonnegative
integer $k$, we indicate by $\bbP_{k}(S)$ the space of
polynomials of degree up to $k$ defined on $S$.


To continue the construction of the discrete scheme, we need some preliminary
definitions. First, we split the bilinear forms $a(\cdot,\cdot)$ and
$b(\cdot,\cdot)$, introduced in the previous section as follows:
\begin{eqnarray*}
a\left(\bu,\bv \right)&=&\sum_{E\in\CT_h}a^{E}\left(\bu,\bv\right),
\quad\text{ and }\quad b\left(\bu,\bv\right)=\sum_{E\in\CT_h}b^{E}\left(\bu,\bv\right),\quad \bu,\bv\in\bV
\end{eqnarray*}
with
\begin{eqnarray*}
\label{formasa}
a^{E}\left(\bu,\bv \right)&:=&\int_{E}\Cten\beps(\bu):\beps(\bv)\qquad\forall \bu,\bv 
\in[\H^{1}(\O)]^{2} ,\\\label{formasc}
b^{E}\left(\bu,\bv\right)&:=&\int_{E}\varrho \bu\cdot\bv\qquad\forall \bu,\bv 
\in[\H^{1}(\O)]^{2}.
\end{eqnarray*}

Now, we consider a simple polygon $E$ and, for $k\in\N$, we define 
\begin{equation*}
\label{espace-1}
\boldsymbol{\mathbb{B}}_{\partial E}:=\{\bv_{h}\in [C^0(\partial E)]^{2}: \bv_{h}|_{\ell}\in 
[\bbP_k(\ell)]^{2}\ \ \forall \ell\subset
\partial E\}.
\end{equation*}
We then consider the
following finite dimensional 
space:
\begin{equation*}
 \label{space2}
\bW_h^E:=\left\{\bv_{h}\in [\H^1(E)]^2: \Delta \bv_{h}\in[\mathbb{P}_k(E)]^2\,\,
\textrm{and}\,\,\bv_{h}|_{\partial E}\in \boldsymbol{\mathbb{B}}_{\partial E}\right\}.
 \end{equation*}
The following set of linear operators are well defined for all $\bv_h\in\bW_h^E$:
 \begin{itemize}
  \item $\mathcal{V}_E^h$ : The (vector) values of $\bv_h$ at the vertices.
  \item $\mathcal{E}_E^h$, for $k>1$ : The edge moments $\disp\int_{\ell}\bp\cdot\bv_{h}$
  for  $\bp\in [\mathbb{P}_{k-2}(\ell)]^2$ on each edge $\ell$ of $E$.
  \item $\mathcal{K}_E^h$, for $k>1$ : The internal moments $\disp\int_{E}\bp\cdot\bv_{h}$
  for  $\bp\in [\mathbb{P}_{k-2}(E)]^2$ on each element $E$.
\end{itemize}
Now we define the projector
$\bpisi: \bW_h^{E}\longrightarrow [\mathbb{P 
}_k(E)]^2\subset\bW_h^{E}$
for each $\bv_h\in\bW_h^{E}$ as the solution of
\begin{align}
\label{proje_0}
\left\{\begin{array}{ll}
&  a^{E}(\bp,\bpisi\bv_h) = a^{E}(\bp,\bv_h)
\quad \forall \bp\in [\bbP_k(E)]^2,\\\\
& \left<\left<\bp,\bpisi\bv_h\right>\right>=\left<\left<\bp,
\bv_h\right>\right>\quad\forall \bp\in ker(a^E(\cdot,\cdot)), 
\end{array}\right.
\end{align}
where for all ${\bf r}_h,{\bf s}_h\in\bW_h^{E}$,
$$
\left<\left<{\bf r}_h,{\bf s}_h\right>\right>:=\disp\dfrac{1}{N_{E}}\sum_{i=1}^{N_{E}}
{\bf r}_h(\vv_i)\cdot {\bf s}_h (\vv_i),\quad\vv_i=\text{ vertices of }E , \; 1\leq i\leq N_{E} .
$$
We note that the second equation in \eqref{proje_0} is needed for the problem to be well-posed.

Now, we introduce our local virtual space:
\begin{equation*}
 \label{space_3}
\bV_h^E:=\left\{\bv_{h}\in 
\bW_h^E: \displaystyle\int_E 
\boldsymbol{p}\cdot\bpisi\bv_{h}=\displaystyle\int_E\boldsymbol{p}\cdot\bv_{h},\quad \forall \boldsymbol{p}\in 
[\mathbb{P}_k(E)]^2/[\mathbb{P}_{k-2}(E)]^2\right\},
 \end{equation*}
where the space $[\mathbb{P}_k(E)]^2/[\mathbb{P}_{k-2}(E)]^2$ denote the polynomials
in $[\mathbb{P}_k(E)]^2$ that are $[{\mathrm L}^{2}(E)]^{2}$ orthogonal to $[\mathbb{P}_{k-2}(E)]^2$.
We observe that, since $\bV_h^E\subset \bW_h^E$, the operator $\bpisi$ is well defined on $\bV_h^E$ and computable 
only on the basis of the output values of the operators in $\mathcal{V}_E^h$,  $\mathcal{E}_E^h$ and $\mathcal{K}_E^h$.
We note that it can be proved, see \cite{equiv,BBCMMR2013,BBMRm3as2016} that the set of linear operators
$\mathcal{V}_E^h$,  $\mathcal{E}_E^h$ and $\mathcal{K}_E^h$ constitutes a set of degrees of
freedom for the local virtual space $\bV_h^E$. Moreover, it is easy to check that
$[\mathbb{P}_k(E)]^2\subset \bV_h^E$. This will guarantee the good approximation properties for the space.

Additionally, we have that the  standard $[{\mathrm L}^2(E)]^2$-projector operator
$\bpiO: \bV_h^{E}\to[\bbP_k(E)]^2$  can be  computed from the set
of degrees freedom. In fact, for all $\bv_h\in  \bV_h^E$, the function $\bpiO\bv_h\in [\bbP_k(E)]^2$ is defined by:
\begin{equation*}
\label{pi0}
\int_E\boldsymbol{p}\cdot\bpi_0^E\bv_h=\left\{\begin{array}{ll}
 \disp\int_E \boldsymbol{p}\cdot\bpisi\bv_h,&\quad\forall \boldsymbol{p}\in[\mathbb{P}_k(E)]^2/[\mathbb{P}_{k-2}(E)]^2
,\\\\
\disp \int_E\boldsymbol{p}\cdot\bv_h,&\quad\forall \bp\in [\mathbb{P}_{k-2}(E)]^2. 
\end{array}\right. 
\end{equation*}

We can now present the global virtual space:
for every decomposition $\CT_h$ of $\O$ into
simple polygons $E$.
\begin{align*}
\bV_h:=\left\{\bv_{h}\in\bV:\bv_h|_{E}\in  \bV_h^{E},\quad \forall E\in \CT_h \right\}.
\end{align*}
In agreement with the local choice of the degrees of freedom, in 
$\bV_h$ we choose the following degrees of freedom:
\begin{itemize}
\item $\mathcal{V}^h$: the (vector) values of $\bv_h$ at the vertices of $\CT_h$.
 \item $\mathcal{E}^h$, for $k>1$ : The edge moments $\disp\int_{\ell}\bp\cdot\bv_{h}\quad
 \forall \bp\in [\mathbb{P}_{k-2}(\ell)]^2$ on each edge $\ell\not\subset\G_{D}$ .
  \item $\mathcal{K}^h$, for $k>1$ : The internal moments $\disp\int_{E}\bp\cdot\bv_{h}\quad
  \forall \bp\in [\mathbb{P}_{k-2}(E)]^2$ on each element $E\in \CT_{h}$.
\end{itemize}

On the other hand, let   $S_{\beps}^E(\cdot,\cdot)$ and $S_0^E(\cdot,\cdot)$
be  symmetric
positive definite bilinear forms  chosen as to satisfy 
\begin{align}
&\disp c_0a^{E}(\bv_h,\bv_h )\leq S_{\beps}^E(\bv_h,\bv_h)\leq\disp c_1a^{E}(\bv_h,\bv_h )
\quad\forall\bv_h\in \bV_h^{E} 
\ \textrm{ with } \bpisi \bv_{h} = \boldsymbol{0},\label{stabilS} \\
&\disp \tilde{c}_0 b^E(\bv_h,\bv_h)\leq S_0^E(\bv_h,\bv_h)\leq\disp \tilde{c}_1b^{E}(\bv_h,\bv_h)
\quad\forall\bv_h\in \bV_h^{E},
\label{stabilS0}
\end{align}
for some positive constants $c_0$, $c_1$, $\tilde{c}_0$ and $\tilde{c}_1$
depending only on the constant $C_{\CT}$ that appears in assumptions $\bA_1$ and $\bA_2$.
Then, we introduce on each element $E$ the local (and computable) bilinear forms 
\begin{align}
a_h^E(\bu_h,\bv_h):=a^E(\bpisi\bu_h,\bpisi\bv_h)
+S_{\beps}^E(\bu_h-\bpisi\bu_h,\bv_h-\bpisi\bv_h)\qquad \bu_h,\bv_h\in  \bV_h^{E},\label{bilineal_ae}\\
b_h^E(\bu_h,\bv_h):=b^E(\bpiO\bu_h,\bpiO\bv_h)+S_0^E(\bu_{h}-\bpiO\bu_h,\bv_h-\bpiO\bv_h)
\qquad \bu_h,\bv_h\in  \bV_h^{E}.\label{bilineal_be}
\end{align}

Now, we define in a natural way
\begin{equation*}
\label{bilinearforms}
a_{h}(\bu_h,\bv_h)=\sum_{E\in\CT_{h}}a_h^{E}(\bu_h,\bv_h),
\qquad b_{h}(\bu_h,\bv_h):=\sum_{E\in\CT_{h}}b_{h}^{E}(\bu_h,\bv_h)\qquad \bu_h,\bv_h\in \bV_h.
\end{equation*}

The construction of $a_h^E(\cdot,\cdot)$ and $b_h^E(\cdot,\cdot)$ 
guarantees the usual \textit{consistency} and  \textit{stability}
properties of VEM, as noted in the proposition below.
Since the proof is simple and follows standard arguments
in the Virtual Element literature, it is omitted (see \cite{BBCMMR2013}).
\begin{proposition}
The local bilinear forms $a_h^E(\cdot,\cdot)$ and $b_h^E(\cdot,\cdot)$ on each element $E$ satisfy
\begin{itemize}
 \item Consistency: for all $h>0$ and for all $E\in\CT_h$ we have that
\begin{align}
\label{consis0}
a_h^E\left(\bp,\bv_h\right)&=a^E(\bp,\bv_h)\quad\forall\bp\in[\bbP_k(E)]^2,\; \forall\bv_h\in \bV_h^{E};\\
 \label{consis1}
b_h^E\left(\bp,\bv_h\right)&=b^E(\bp,\bv_h)\quad\forall\bp\in[\bbP_k(E)]^2,\; \forall\bv_h\in \bV_{h}^{E}.
 \end{align}
 \item Stability: there exist positive constants
 $\alpha_{*}$, $\alpha^{*}$, $\beta_{*}$ and $\beta^{*}$, independent of $h$ and $E$, such that
\begin{align}
\alpha_{*}a^E(\bv_h,\bv_h)&\leq a_h^E(\bv_h,\bv_h)\leq\alpha^{*}a^E(\bv_h,\bv_h)\quad\forall 
\bv_h\in \bV_h^{E},\quad \forall E\in \CT_{h},\label{stab0}\\
\beta_{*}b^E(\bv_h,\bv_h)&\leq b_h^E(\bv_h,\bv_h)\leq\beta^{*}b^E(\bv_h,\bv_h)\quad\forall 
\bv_h\in \bV_h^{E},\quad \forall E\in \CT_{h}.\label{stab2}
\end{align}
\end{itemize}
\end{proposition}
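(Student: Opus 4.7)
The plan is to address the four properties separately, since each relies on a standard VEM argument. The backbone for both the consistency and stability statements is the \emph{polynomial-preserving} property of the projectors: for every $\bp\in[\bbP_k(E)]^2$ one has $\bpisi\bp=\bp$ and $\bpiO\bp=\bp$. For $\bpisi$ this follows from \eqref{proje_0}: the first equation forces $\bpisi\bp-\bp$ to lie in the kernel of $a^{E}(\cdot,\cdot)$ restricted to polynomials (i.e., the rigid body modes), and the second equation then pins down this remaining component so that $\bpisi\bp=\bp$. For $\bpiO$ the statement is immediate, since $[\bbP_k(E)]^2$ is the range of the $[\mathrm{L}^2(E)]^2$-projector.

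For the consistency identity \eqref{consis0}, I would take $\bp\in[\bbP_k(E)]^2$ and $\bv_h\in\bV_h^{E}$. Because $\bpisi\bp=\bp$, the stabilization term in \eqref{bilineal_ae} vanishes and $a_h^{E}(\bp,\bv_h)=a^{E}(\bp,\bpisi\bv_h)$. Using $\bp$ as a test polynomial in the first equation of \eqref{proje_0} (now with $\bv_h$ playing the role of the input) gives $a^{E}(\bp,\bpisi\bv_h)=a^{E}(\bp,\bv_h)$, which is \eqref{consis0}. The proof of \eqref{consis1} is entirely analogous: $\bpiO\bp=\bp$ kills the stabilization term in \eqref{bilineal_be}, and the defining identity of $\bpiO$ together with the constancy of $\varrho$ yields $b^{E}(\bp,\bpiO\bv_h)=b^{E}(\bp,\bv_h)$.

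For the stability bounds \eqref{stab0}, I would start from the Pythagorean-type decomposition
\begin{equation*}
a^{E}(\bv_h,\bv_h)=a^{E}(\bpisi\bv_h,\bpisi\bv_h)+a^{E}(\bv_h-\bpisi\bv_h,\bv_h-\bpisi\bv_h),
\end{equation*}
which follows from the $a^{E}$-orthogonality $a^{E}(\bpisi\bv_h,\bv_h-\bpisi\bv_h)=0$ (apply \eqref{proje_0} to $\bv_h$ with the test polynomial $\bpisi\bv_h\in[\bbP_k(E)]^2$, noting that the rigid body component contributes nothing to $a^{E}$). Since $\bpisi$ is a projection, $\bpisi(\bv_h-\bpisi\bv_h)=\bO$, so \eqref{stabilS} applies to $\bv_h-\bpisi\bv_h$ and, combined with \eqref{bilineal_ae}, yields \eqref{stab0} with $\alpha_{*}=\min\{1,c_0\}$ and $\alpha^{*}=\max\{1,c_1\}$. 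The argument for \eqref{stab2} is parallel: $\bpiO$ replaces $\bpisi$, \eqref{stabilS0} replaces \eqref{stabilS}, and the required orthogonality $b^{E}(\bpiO\bv_h,\bv_h-\bpiO\bv_h)=0$ is simply the $[\mathrm{L}^2(E)]^2$-orthogonality of $\bpiO$ multiplied by the positive constant $\varrho$.

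The only delicate point is the polynomial-preserving property of $\bpisi$: without the second equation of \eqref{proje_0}, $\bpisi$ would be defined only modulo rigid body modes and would fail to fix general polynomials, so consistency of $a_h^{E}$ would break down on polynomials with a nontrivial rigid body part. Once this is settled, everything else reduces to straightforward algebraic manipulation and a direct invocation of the stabilization bounds \eqref{stabilS}--\eqref{stabilS0}.
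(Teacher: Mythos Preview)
Your argument is correct and is precisely the standard VEM argument the paper has in mind: the paper omits the proof entirely, stating only that it ``is simple and follows standard arguments in the Virtual Element literature'' with a reference to \cite{BBCMMR2013}. Your use of the polynomial-invariance of $\bpisi$ and $\bpiO$ for consistency, together with the $a^{E}$- and $b^{E}$-orthogonal (Pythagorean) decompositions combined with \eqref{stabilS}--\eqref{stabilS0} for stability, is exactly that standard route.
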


Now, we are in a position to write the virtual
element discretization of Problem~\ref{P1}.
\begin{problem}
\label{P3}
Find $(\l_h,\bw_h)\in \R\times\bV_{h}$, $\bw_h\neq \0$, such that 
\begin{equation*}\label{vp}
a_{h}(\bw_h,\bv_h)=\l_h b_h(\bw_h,\bv_h) \qquad \forall \bv_h\in\bV_{h}.
\end{equation*}
\end{problem}
We observe that by virtue of  \eqref{stab0},
the bilinear form $a_h(\cdot,\cdot)$ is bounded. Moreover, as is shown in
the following lemma, it is also uniformly elliptic.
\begin{lemma}
\label{ha-elipt-disc}
There exists a constant $\beta>0$, independent of $h$, such that
$$
a_h(\bv_h,\bv_h)
\ge\beta\left\|\bv_h\right\|_{1,\O}^{2}
\qquad\forall \bv_h\in\bV_{h}.
$$
\end{lemma}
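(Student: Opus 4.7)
The proof will combine two ingredients already established in the excerpt: the stability property \eqref{stab0} for the discrete local bilinear form, and the $\bV$-ellipticity of the continuous form $a(\cdot,\cdot)$, which itself follows from Korn's inequality as noted just after Problem~\ref{P1}.

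First, I would apply the lower stability bound \eqref{stab0} elementwise: for every $E \in \CT_h$,
\begin{equation*}
\alpha_* \, a^E(\bv_h, \bv_h) \le a_h^E(\bv_h, \bv_h).
\end{equation*}
Summing over all $E \in \CT_h$ and using the decomposition $a(\bu,\bv) = \sum_{E \in \CT_h} a^E(\bu,\bv)$ yields
\begin{equation*}
\alpha_* \, a(\bv_h, \bv_h) \le a_h(\bv_h, \bv_h) \qquad \forall \bv_h \in \bV_h.
\end{equation*}

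Next, since $\bV_h \subset \bV$, the global continuous $\bV$-ellipticity estimate $a(\bv,\bv) \ge C\|\bv\|_{1,\O}^2$ applies to any $\bv_h \in \bV_h$. Combining the two inequalities gives
\begin{equation*}
a_h(\bv_h, \bv_h) \ge \alpha_* \, C \, \|\bv_h\|_{1,\O}^2,
\end{equation*}
so the result follows with $\beta := \alpha_* C$, which is independent of $h$ because $\alpha_*$ is (by \eqref{stab0}) and $C$ is a fixed constant from Korn's inequality on $\O$.

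There is no genuine obstacle here: the argument is a direct bookkeeping of the stability constant \eqref{stab0} together with the continuous coercivity. The only point worth emphasizing in writing the proof is that one must invoke the lower inequality of \eqref{stab0}, which holds for \emph{all} $\bv_h \in \bV_h^E$ (not just those in the kernel of $\bpisi$), so no extra splitting between the projected and non-projected parts is necessary.
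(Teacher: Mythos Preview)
Your proof is correct and follows essentially the same approach as the paper: apply the lower stability bound \eqref{stab0} and then the continuous $\bV$-ellipticity of $a(\cdot,\cdot)$. The paper's one-line proof records the constant as $\beta=\min\{\alpha_*,1\}$ rather than your $\beta=\alpha_* C$, but the underlying argument is identical.
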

\begin{proof}
Thanks to \eqref{stab0}, it is easy to check that
the above inequality holds with
$\beta:=\min\left\{\alpha_{*},1\right\}$.
\end{proof}

The next step is to introduce the discrete version of operator $\bT$:
\begin{align*}
\bT_h:\bV_{h} & \longrightarrow \bV_{h},\\
\bff_h & \longmapsto \bT_h\bff_h:=\bu_h,
\end{align*}
where $\bu_h\in\bV_{h}$ is the solution of the corresponding
discrete source problem:
\begin{equation}\label{T2}
a_h(\bu_h,\bv_h)=b_h(\bff_h,\bv_h)\qquad\forall \bv_h\in\bV_{h}.
\end{equation}
We deduce from Lemma \ref{ha-elipt-disc}, \eqref{stab0}--\eqref{stab2}
and the Lax-Milgram Theorem,
that the linear operator $\bT_h$ 
is well defined and bounded uniformly with respect to $h$.

Once more, as in the continuous case, $(\l_h,\bw_h)$ solves Problem~\ref{P3} if and 
only if  $(\mu_h,\bw_{h})$ is an eigenpair of $\bT_h$, i.e, if 
and only if 
$$\bT_h\bu_{h}=\mu_h\bu_{h}\quad \text{ with }\;\; \mu_h:=\dfrac{1}{\l_h}.$$
Moreover, it is easy to check that $\bT_h$ is self-adjoint with respect to 
$a_h(\cdot,\cdot)$ and $b_h(\cdot,\cdot)$. 

As a consequence, we have the following spectral characterization
of the discrete solution operator.

\begin{theorem}
\label{CHAR_SP_DISC}
The spectrum of $\bT_h$ consists of $M_h:=\dim(\bV_{h})$ eigenvalues
repeated according
to their respective multiplicities. All of them are real and positive.
\end{theorem}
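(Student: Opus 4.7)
The plan is to exploit the fact that $\bV_h$ is finite-dimensional of dimension $M_h$ and reduce the claim to the spectral theorem for self-adjoint operators on a finite-dimensional inner product space.

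First, I would observe that Lemma~\ref{ha-elipt-disc} together with the symmetry of $a_h(\cdot,\cdot)$ implies that $a_h(\cdot,\cdot)$ is a genuine inner product on $\bV_h$, so that $(\bV_h, a_h)$ is a finite-dimensional Euclidean space of dimension $M_h$. Any linear operator on such a space has a characteristic polynomial of degree $M_h$ and hence at most $M_h$ eigenvalues counted with multiplicity, which already gives the upper bound on the cardinality of $\sp(\bT_h)$.

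Second, I would verify carefully that $\bT_h$ is self-adjoint with respect to $a_h(\cdot,\cdot)$. By the definition~\eqref{T2} and the symmetry of $a_h(\cdot,\cdot)$ and $b_h(\cdot,\cdot)$, for any $\bff_h, \bg_h \in \bV_h$ one has
\begin{equation*}
a_h(\bT_h\bff_h, \bg_h) = b_h(\bff_h, \bg_h) = b_h(\bg_h, \bff_h) = a_h(\bT_h\bg_h, \bff_h) = a_h(\bff_h, \bT_h\bg_h).
\end{equation*}
The spectral theorem for self-adjoint operators on a finite-dimensional inner product space then yields an $a_h$-orthonormal basis of $\bV_h$ consisting of eigenvectors of $\bT_h$ with real eigenvalues, which accounts for exactly $M_h$ eigenvalues counted with multiplicity.

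Third, for the positivity of the eigenvalues, let $\bff_h \in \bV_h$ satisfy $\bT_h\bff_h = \mu_h \bff_h$ with $\bff_h \neq \bO$. Taking $\bv_h = \bff_h$ in the identity $a_h(\bT_h\bff_h,\bff_h) = b_h(\bff_h,\bff_h)$, I obtain
\begin{equation*}
\mu_h \, a_h(\bff_h,\bff_h) = b_h(\bff_h,\bff_h).
\end{equation*}
By the lower bound in~\eqref{stab2} together with $\varrho>0$, the right-hand side is strictly positive whenever $\bff_h \neq \bO$, and by Lemma~\ref{ha-elipt-disc} so is $a_h(\bff_h,\bff_h)$. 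Hence $\mu_h > 0$. There is no real obstacle in this argument; the only point that requires care is not to confuse the $a$-self-adjointness with a formal $L^2$-self-adjointness, since it is the former that underlies both the reality and the counting of the eigenvalues.
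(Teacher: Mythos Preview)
Your argument is correct and is exactly the standard reasoning the paper has in mind: the paper does not spell out a proof of Theorem~\ref{CHAR_SP_DISC} at all, simply stating it ``as a consequence'' of the preceding observation that $\bT_h$ is self-adjoint with respect to $a_h(\cdot,\cdot)$ and $b_h(\cdot,\cdot)$. Your proposal faithfully supplies the omitted details---finite-dimensionality, the spectral theorem for self-adjoint operators on a Euclidean space, and the positivity via \eqref{stab2} and Lemma~\ref{ha-elipt-disc}---so there is nothing to add.
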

\setcounter{equation}{0}
\section{Spectral approximation and error estimates}
\label{SEC:approximation}
To prove that $\bT_{h}$ provides a correct spectral approximation
of $\bT$, we will resort to the classical theory for compact operators (see \cite{BO}).
With this aim, we recall the following approximation result which is derived by interpolation
between Sobolev spaces (see for instance \cite[Theorem I.1.4]{GR}
from the analogous result for integer values of $s$.
In its turn, the result for integer values is stated
in \cite[Proposition 4.2]{BBCMMR2013} and follows from the
classical Scott-Dupont theory (see \cite{BS-2008}):
\begin{lemma}
\label{estima2}
Assume $\bA_1$ and $\bA_2$ are satisfied.
There exists a constant $C>0$,  such that for every
$\bv\in [\H^{1+t}(E)]^2$ with $0\leq t\leq k$,
there exists $\bv_{\Pi}\in [\bbP_k(E)]^2$, $k\geq 0$ such that
\begin{eqnarray*}
\|\bv-\bv_{\Pi}\|_{0,E}+h_{E}\arrowvert \bv-\bv_{\Pi}\arrowvert_{1,E}\leq C h_E^{1+t}|\bv|_{1+t,E}.
\end{eqnarray*}
\end{lemma}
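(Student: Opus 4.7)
The plan is to reduce the statement to the classical Scott--Dupont / Bramble--Hilbert theory on star-shaped domains and then to extend the result from integer to non-integer regularity indices by Sobolev interpolation, exactly as the paragraph preceding the lemma suggests.

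First, I would handle the case where the regularity index is an integer, i.e. $t \in \{0,1,\dots,k\}$. Under assumption $\bA_2$, each $E \in \CT_h$ is star-shaped with respect to a ball $B_E \subset E$ of radius at least $C_\CT h_E$. This is precisely the setting in which the averaged Taylor polynomial of degree $k$ over $B_E$, call it $Q^k$, is a well-defined bounded linear projection from $[\H^{1+t}(E)]^2$ onto $[\bbP_k(E)]^2$ (applied componentwise). Setting $\bv_\Pi := Q^k \bv$, the Bramble--Hilbert lemma combined with a scaling to a reference element of unit diameter gives
$$
\|\bv - \bv_\Pi\|_{0,E} + h_E\, |\bv - \bv_\Pi|_{1,E} \;\le\; C\, h_E^{1+t}\, |\bv|_{1+t,E},
$$
with $C$ depending only on $k$ and on the chunkiness parameter of $E$, which is uniformly controlled by $C_\CT$ thanks to $\bA_1$ and $\bA_2$. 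This is the content of \cite[Proposition~4.2]{BBCMMR2013} and the underlying machinery is developed in \cite{BS-2008}.

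Next, for a non-integer $t \in (0,k)$, I would interpolate. Let $t_0 := \lfloor t \rfloor$ and $t_1 := t_0 + 1$, so that $t_0 \le t \le t_1 \le k$, and write $\theta := t - t_0 \in [0,1]$. Consider the linear operator $R := \mathrm{Id} - Q^k$ acting componentwise. By the integer case, $R$ maps $[\H^{1+t_j}(E)]^2$ continuously into both $[\mathrm{L}^2(E)]^2$ and $[\H^1(E)]^2$, with norms bounded respectively by $C h_E^{1+t_j}$ and $C h_E^{t_j}$, for $j = 0,1$. Real interpolation between these two endpoint estimates, together with the fact that $[\H^{1+t}(E)]^2$ is the intermediate space between $[\H^{1+t_0}(E)]^2$ and $[\H^{1+t_1}(E)]^2$ with parameter $\theta$, yields
$$
\|R\bv\|_{0,E} + h_E\, |R\bv|_{1,E} \;\le\; C\, h_E^{1+t}\, \|\bv\|_{1+t,E},
$$
which is the stated estimate with the seminorm $|\cdot|_{1+t,E}$ on the right-hand side replaced by the full norm; because $R$ annihilates polynomials in $[\bbP_k(E)]^2$, a standard quotient argument (Deny--Lions on the reference element, scaled back) then replaces the full norm by the seminorm $|\bv|_{1+t,E}$. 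This interpolation step is the content of \cite[Theorem~I.1.4]{GR}.

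The main obstacle is keeping the constant $C$ independent of both $h_E$ and the specific shape of $E$. The natural way to do this is to pull everything back to a reference configuration of unit diameter, where the Scott--Dupont constants are absolute and the interpolation constants are fixed, and then scale back. It is precisely at this scaling step that assumptions $\bA_1$ and $\bA_2$ are used: they guarantee that the family of rescaled elements remains in a compact family of admissible star-shaped domains, so that the constants do not deteriorate as $h \to 0$. Once this uniformity is secured, the exponents of $h_E$ in the estimate come out by a direct change-of-variables count, and the combination of the integer case and the interpolation step delivers the lemma.
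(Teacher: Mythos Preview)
Your proposal is correct and follows exactly the route the paper indicates: the integer case via the averaged Taylor polynomial / Scott--Dupont theory on star-shaped domains (\cite[Proposition~4.2]{BBCMMR2013}, \cite{BS-2008}), and then passage to non-integer $t$ by Sobolev interpolation (\cite[Theorem~I.1.4]{GR}). The paper does not supply a proof beyond these references, so your write-up is in fact a faithful expansion of what the paper only sketches.
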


The classical theory for compact operators, is based on the
convergence in norm of $\bT_{h}$ to $\bT$ as $h\rightarrow 0$.
However, the operator $\bT_{h}$ is not well defined for any
$\bff\in\bV$, since the definition of bilinear form $S_{0}^{E}(\cdot,\cdot)$
in \eqref{stabilS0} needs the degrees of freedom and in particular
the pointwise values of $\bff$.
To circumvent this drawback, we introduce the projector
$\bP_h:\;[\LO]^{2}\longrightarrow\bV_{h}\hookrightarrow\bV$ with range $\bV_h$,
which is defined by the relation 
\begin{equation}
\label{prop}b(\bP_h\bu-\bu,\bv_h)=0
\qquad\forall\bv_h\in\bV_h.
\end{equation}
In our case, the bilinear form $b(\cdot,\cdot)$ correspond
to the $\LO$ inner product. Thus, $\left\|\bP_h\bu\right\|_{0,\O}\leq\left\|\bu\right\|_{0,\O}$.
Moreover,
\begin{equation}\label{errorpro}
\Vert\bu-\bP_h\bu\Vert_{0,\O}=\inf_{\bv_h\in\bV_{h}}\Vert\bu-\bv_h\Vert_{0,\O}.
\end{equation}

For the analysis we introduce the following broken seminorm:
\begin{equation}
\vert \bv\vert_{1,h,\O}^2:=\sum_{E\in\CT_h}\vert \bv\vert_{1,E}^2,
\end{equation}
which is well defined for every $\bv\in[\LO]^{2}$
such that $\bv|_{E}\in[\H^1(E)]^{2}$ for all polygon $E\in\CT_h$.

Now, we define
$\widehat{\bT}_{h}:=\bT_{h}\bP_h:\;\bV\longrightarrow\bV_{h}$. Notice
that $\sp(\widehat{\bT}_h)=\sp(\bT_h)\cup\{0\}$ and the eigenfunctions
of $\widehat{\bT}_h$ and $\bT_h$ coincide.
Furthermore, we have the following result.

\begin{lemma}
\label{lemcotste}
There exists $C>0$ such that, for all $\bff\in\bV$, if $\bu:=\mathbf{T}\bff$
and $\bu_h:=\widehat{\bT}_{h}\bff=\mathbf{T}_h\bP_h\bff$, then
\begin{align*}
\|(\mathbf{T}-\widehat{\bT}_h)\bff\|_{1,\O}& \leq C\left(h\left(||\bff-\bff_{I}\|_{0,\O}+\|\bff-\bff_{\pi}\|_{0,\O}\right)
+\|\bu-\bu_{I}\|_{1,\O}+|\bu-\bu_{\pi}|_{1,h,\O}\right),
\end{align*}
for all $\bu_{I},\bff_{I}\in \bV_{h}$, for all $\bu_{\pi}\in [\LO]^{2}$
such that $\bu_{\pi}|_{E}\in [\bbP_k(E)]^2$ $\forall E\in\T_{h}$ and for all $\bff_{\pi}\in [\LO]^{2}$
such that $\bff_{\pi}|_{E}\in [\bbP_k(E)]^2$ $\forall E\in\T_{h}$.
\end{lemma}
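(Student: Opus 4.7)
The plan is a Strang-type lemma tailored to the VEM setting. First I would write the error as $\bu-\bu_h=(\bu-\bu_I)+(\bu_I-\bu_h)$, so that $\|\bu-\bu_I\|_{1,\O}$ is already one of the terms allowed on the right-hand side. What remains is to bound $\|\bu_I-\bu_h\|_{1,\O}$, and for this I would invoke the uniform discrete ellipticity of $a_h(\cdot,\cdot)$ (Lemma~\ref{ha-elipt-disc}): choosing the test function $\bv_h:=\bu_I-\bu_h\in\bV_h$ gives
\begin{equation*}
\beta\,\|\bu_I-\bu_h\|_{1,\O}^{2}\le a_h(\bu_I-\bu_h,\bv_h).
\end{equation*}
Using the continuous equation $a(\bu,\bv_h)=b(\bff,\bv_h)$ and the discrete equation $a_h(\bu_h,\bv_h)=b_h(\bP_h\bff,\bv_h)$, the right-hand side above splits as the sum of an \emph{elasticity consistency} residual $\mathcal{R}_a:=a_h(\bu_I,\bv_h)-a(\bu,\bv_h)$ and a \emph{mass consistency} residual $\mathcal{R}_b:=b(\bff,\bv_h)-b_h(\bP_h\bff,\bv_h)$.

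For $\mathcal{R}_a$ I would work element by element, insert a piecewise polynomial $\bu_\pi$, and exploit the polynomial consistency \eqref{consis0} to write
\begin{equation*}
\mathcal{R}_a=\sum_{E\in\CT_h}\bigl[a_h^E(\bu_I-\bu_\pi,\bv_h)+a^E(\bu_\pi-\bu,\bv_h)\bigr].
\end{equation*}
Stability \eqref{stab0}, the boundedness of $\Cten$, Korn's inequality, a Cauchy--Schwarz per element and a triangle inequality inserting $\bu$ then produce the clean bound $|\mathcal{R}_a|\le C\bigl(\|\bu-\bu_I\|_{1,\O}+|\bu-\bu_\pi|_{1,h,\O}\bigr)|\bv_h|_{1,\O}$.

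The more delicate piece is $\mathcal{R}_b$, because I need to recover the \emph{extra factor} $h$ on the $\bff$ terms. The key observation is that by the definition~\eqref{prop} of $\bP_h$ we have $b(\bff-\bP_h\bff,\bv_h)=0$, and therefore $\mathcal{R}_b=(b-b_h)(\bP_h\bff,\bv_h)$. On each element, using consistency \eqref{consis1} with respect to $\bpiO\bP_h\bff$ together with the $L^2$-orthogonality of $\bpiO$, one obtains the identity
\begin{equation*}
(b^E-b_h^E)(\bP_h\bff,\bv_h)=b^E(\bP_h\bff-\bpiO\bP_h\bff,\bv_h-\bpiO\bv_h)-S_0^E(\bP_h\bff-\bpiO\bP_h\bff,\bv_h-\bpiO\bv_h).
\end{equation*}
The stability~\eqref{stab2}, the standard estimate $\|\bv_h-\bpiO\bv_h\|_{0,E}\le Ch_E|\bv_h|_{1,E}$, and the best-approximation property of $\bpiO$ applied against $\bff_\pi$ (inserting $\bff$ and using $\|\bff-\bP_h\bff\|_{0,\O}\le\|\bff-\bff_I\|_{0,\O}$ from~\eqref{errorpro}) together yield $|\mathcal{R}_b|\le Ch\bigl(\|\bff-\bff_I\|_{0,\O}+\|\bff-\bff_\pi\|_{0,\O}\bigr)|\bv_h|_{1,\O}$.

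Combining the two bounds, dividing by $\|\bu_I-\bu_h\|_{1,\O}$, and adding $\|\bu-\bu_I\|_{1,\O}$ via the triangle inequality gives exactly the asserted estimate. The main obstacle is squeezing out the factor $h$ in $\mathcal{R}_b$: a naive bound of $(b-b_h)(\bff_I,\bv_h)$ via stability alone would only give $\|\bv_h\|_{0,\O}$ and hence no gain in $h$; the trick is precisely to first absorb $\bff$ into $\bP_h\bff$ so that the residual depends only on the non-polynomial part of $\bP_h\bff$, and then to transfer the lack of polynomial consistency onto $\bv_h-\bpiO\bv_h$ through the orthogonality of $\bpiO$, which costs one power of $h$.
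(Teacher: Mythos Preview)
Your proposal is correct and follows essentially the same Strang-type argument as the paper: the triangle inequality, discrete ellipticity applied to $\bv_h=\bu_I-\bu_h$, the same splitting into an elasticity residual and a mass residual, and the same treatment of the elasticity piece via consistency~\eqref{consis0} plus stability~\eqref{stab0}. The only cosmetic difference is in the mass residual $\mathcal{R}_b$: the paper subtracts a generic polynomial approximation $\bv_h^\pi$ of $\bv_h$ (from Lemma~\ref{estima2}) and a generic $\bff_\pi$, invoking consistency~\eqref{consis1} twice, whereas you use the specific $L^2$-projection $\bpiO$ in both slots and exploit its orthogonality to obtain an explicit identity before bounding---both routes produce the same factor $h$ from $\|\bv_h-\bpiO\bv_h\|_{0,E}\le Ch_E|\bv_h|_{1,E}$ and then reduce $\|\bP_h\bff-\bff_\pi\|_{0,\O}$ to $\|\bff-\bff_I\|_{0,\O}+\|\bff-\bff_\pi\|_{0,\O}$ via~\eqref{errorpro}.
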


\begin{proof}
Let $\bff\in\bV $, for $\bu_{I}\in \bV_{h}$ we have that
\begin{equation}
\label{triangle}
 \|(\mathbf{T}-\widehat{\bT}_h)\bff\|_{1,\O}\leq
 \|\bu-\bu_{I}\|_{1,\O}+\|\bu_{I}-\bu_h\|_{1,\O}.
\end{equation}
Now, if we define $\bv_h:=\bu_h-\bu_{I}\in \bV_{h}$,
thanks to Lemma~\ref{ha-elipt-disc}, the definition of $a_h^{E}(\cdot,\cdot)$
(cf \eqref{bilineal_ae}) and those of $\mathbf{T}$ and $\bT_h$, we have 
\begin{align*}
\beta\left\|\bv_h\right\|_{1,\O}^2
&\leq a_h(\bv_h,\bv_h)
=a_h(\bu_h,\bv_h)-a_h(\bu_{I},\bv_h)=b_h(\bP_h\bff,\bw_h)-\sum_{E\in \T_{h}}a_{h}^{E}(\bu_{I},\bv_{h})\\
&=\underbrace{b_h(\bP_h\bff,\bv_h)-b(\bff,\bv_{h})}_{T_{1}}-\underbrace{\sum_{E\in \T_{h}}\left[a_{h}^{E}(\bu_{I}-\bu_{\pi},\bv_{h})+a^{E}(\bu_{\pi}-\bu,\bv_{h})\right]}_{T_{2}},
\end{align*}
where we have used the \textit{consistency} property~\eqref{consis0}
to derive the last equality. 
We now bound each term $T_{i},i=1,2$, with a constant $C>0$. 

The term $T_{1}$ can be bounded as follows:
Let $\bv_{h}^{\pi}\in [\bbP_k(E)]^2$ such that Lemma~\ref{estima2}
holds true, then by \eqref{prop}, we have 
 \begin{align*}
T_{1}&=b_h(\bP_h\bff,\bv_h)-b(\bP_h\bff,\bv_{h})= \sum_{E\in \T_{h} }\left[b_h^{E}(\bP_h\bff,\bv_h-\bv_{h}^{\pi})-b^{E}(\bP_h\bff,\bv_{h}-\bv_{h}^{\pi})\right]\\
& =\sum_{E\in \T_{h} }\left[b_h^{E}(\bP_h\bff-\bff_{\pi},\bv_h-\bv_{h}^{\pi})-b^{E}(\bP_h\bff-\bff_{\pi},\bv_{h}-\bv_{h}^{\pi})\right]\\
&\leq C\left(\sum_{E\in \T_{h} }\|\bP_h\bff-\bff_{\pi}\|_{0,E}^{2}\right)^{1/2}\left(\sum_{E\in \T_{h} }\|\bv_h-\bv_{h}^{\pi}\|_{0,E}^{2}\right)^{1/2}\\
&\leq C\|\bP_h\bff-\bff_{\pi}\|_{0,\O}\left(\sum_{E\in \T_{h} }h_{E}^2|\bv_h|_{1,E}^2\right)^{1/2}\\
&\leq Ch\left(\|\bP_h\bff-\bff\|_{0,\O}+\|\bff-\bff_{\pi}\|_{0,\O}\right)||\bv_{h}||_{1,\O}\\
&\leq Ch\left(||\bff-\bff_{I}\|_{0,\O}+\|\bff-\bff_{\pi}\|_{0,\O}\right)||\bv_{h}||_{1,\O}
\end{align*}
where we have used the definitions of $b_{h}(\cdot,\cdot)$
and $b(\cdot,\cdot)$, the {\it consistency} and {\it stability} properties
\eqref{consis1} and \eqref{stab2}, respectively,
together with Cauchy-Schwarz inequality, Lemma~\ref{estima2} and \eqref{errorpro}.

To bound $T_{2}$, we first use the  {\it stability} property~\eqref{stab0},
Cauchy-Schwarz inequality again and  adding and subtracting  $\bu$ to obtain 
\begin{align*}
T_{2}&\leq C\sum_{E\in \T_{h}}\left(|\bu-\bu_{I}|_{1,E}+2|\bu-\bu_{\pi}|_{1,E}\right)|\bv_{h}|_{1,E}.
\end{align*}
Therefore, by combining the above bounds, we obtain
\begin{align*}
\beta\left\|\bv_h\right\|_{1,\O}&\leq   C\left(h||\bff-\bff_{I}\|_{0,\O}+h\|\bff-\bff_{\pi}\|_{0,\O}+\|\bu-\bu_{I}\|_{1,\O}+|\bu-\bu_{\pi}|_{1,h,\O}\right).
\end{align*}
Hence, the proof follows from the above estimate  and \eqref{triangle}.
 \end{proof}

The next step is to find appropriate term $\bu_{I}$ that can
be used in the above lemma. Thus, we have the following result. 
\begin{lemma}
\label{estima4}
Assume $\bA_1$ and $\bA_2$ are satisfied.
Then, for every $\bv\in [\H^{1+t}(E)]^2$ with $0\leq t\leq k$,  there exists  
$\bv_{I}\in \bV_{h}$ and a constant $C > 0$, such that
 \begin{eqnarray*}
\|\bv-\bv_{I}\|_{0,E}+h_{E}\arrowvert \bv-\bv_{I}\arrowvert_{1,E}\leq C h_E^{1+t}|\bv|_{1+t,E}.
\end{eqnarray*}
\end{lemma}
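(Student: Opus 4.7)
The plan is to construct $\bv_I$ as the canonical VEM interpolant, obtained by matching the degrees of freedom of $\bv$ element by element, and then to reduce the estimate to the polynomial approximation result of Lemma~\ref{estima2} by invoking a stability property of this interpolation operator.

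First, I would define $\bv_I|_E \in \bV_h^E$ as the unique element of $\bV_h^E$ whose values under the functionals in $\mathcal{V}_E^h$, $\mathcal{E}_E^h$ and $\mathcal{K}_E^h$ coincide with those of $\bv$. Unisolvence of these degrees of freedom for $\bV_h^E$ (stated after the definition of the local space) makes this well-defined, and Sobolev embedding $[\H^{1+t}(E)]^2 \hookrightarrow [C^0(\bar E)]^2$ (for $t>0$ in 2D) ensures the vertex functionals make sense; the inclusion $[\bbP_k(E)]^2 \subset \bV_h^E$ then gives the polynomial preservation property $\bp_I = \bp$ for all $\bp \in [\bbP_k(E)]^2$.

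Next, I would exploit the classical splitting with the polynomial $\bv_\Pi \in [\bbP_k(E)]^2$ provided by Lemma~\ref{estima2}:
\begin{equation*}
\bv - \bv_I = (\bv - \bv_\Pi) - (\bv - \bv_\Pi)_I .
\end{equation*}
The first summand is controlled directly by Lemma~\ref{estima2}. For the second, I need the stability bound
\begin{equation*}
\|\bw_I\|_{0,E} + h_E |\bw_I|_{1,E} \;\le\; C\bigl(\|\bw\|_{0,E} + h_E |\bw|_{1,E} + h_E^{1+t}|\bw|_{1+t,E}\bigr) \qquad \forall\, \bw \in [\H^{1+t}(E)]^2,
\end{equation*}
applied to $\bw = \bv - \bv_\Pi$. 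Combining these two estimates and using Lemma~\ref{estima2} once more to bound $\|\bv-\bv_\Pi\|_{0,E}$, $|\bv-\bv_\Pi|_{1,E}$ and $|\bv-\bv_\Pi|_{1+t,E}\le |\bv|_{1+t,E}$ yields the desired inequality.

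The main obstacle is proving the stability bound in the previous paragraph. The standard route is a scaling argument: map $E$ to a reference element $\hat E$ of unit size, for which assumptions $\bA_1$ and $\bA_2$ guarantee a uniform shape regularity class, and show on $\hat E$ that $\|\hat\bw_I\|_{1,\hat E}$ is controlled by the values of the degrees of freedom on $\hat\bw$. This in turn follows from the structural definition of $\bV_h^E$ (boundary values in $[C^0 \cap \bbP_k]$ piecewise, $\Delta \hat\bw_I \in [\bbP_k(\hat E)]^2$, and the constraint involving $\bpisi$), combined with trace inequalities and continuous Sobolev embeddings on $\hat E$ to bound the degrees of freedom in terms of $\|\hat\bw\|_{1+t,\hat E}$. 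Scaling back to $E$ restores the $h_E$-weighted form of the estimate. This stability result is by now standard in the VEM literature (see, e.g., \cite{BBCMMR2013,BBMRm3as2016,MRR2015}) and is the only non-routine ingredient of the proof.
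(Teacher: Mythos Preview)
Your construction via the nodal (degree-of-freedom matching) interpolant is a legitimate and classical route, but it is \emph{not} the one the paper uses, and it has one real gap relative to the stated lemma.

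\textbf{The gap: the endpoint $t=0$.} The statement allows $0\le t\le k$, and the case $t=0$ is actually invoked later (e.g.\ in the reliability proof of Theorem~\ref{erroipo}, where $\be\in[\H^1(\Omega)]^2$ only and one needs $\|\be-\be_I\|_{0,E}+h_E|\be-\be_I|_{1,E}\le Ch_E\|\be\|_{1,E}$). Your interpolant requires vertex evaluations, hence the Sobolev embedding into $C^0$, which fails precisely at $t=0$. So as written your argument does not prove the full lemma. The paper avoids this by taking the Cl\'ement route: it cites Theorem~11 of \cite{CGPS} verbatim, replacing their Cl\'ement estimate by the fractional-order version from \cite[Proposition~4.2]{MRR2015}. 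In that construction $\bv_I$ is built on top of a Cl\'ement interpolant on the sub-triangulation, so no point values of $\bv$ are needed and the bound holds down to $t=0$ (at the price of a patch $\widetilde T$ on the right-hand side, which is harmless once one sums over elements).

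\textbf{On the scaling argument.} Your sketch ``map $E$ to a reference element $\hat E$'' is too loose for general polygons: there is no single reference polygon, and the number of edges varies with $E$. One can make this work by a pure dilation to a unit-diameter polygon and a compactness argument over the class of polygons satisfying $\mathbf{A_1}$--$\mathbf{A_2}$, but this is exactly the delicate step that the references you cite (\cite{BBMRm3as2016,MRR2015}) and \cite{CGPS} spend effort on. Since you invoke those references anyway, your proposal for $t>0$ is essentially correct; just be aware that ``the standard route'' here is not the affine-equivalence argument from triangular finite elements.

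In short: your approach is fine for $t>0$ and is a genuine alternative to the paper's Cl\'ement-based proof, but to cover the full range $0\le t\le k$ claimed in the lemma you would have to switch to a quasi-interpolant that avoids point evaluations, which is precisely what the paper does.
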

\begin{proof}
The proof is identical to that of Theorem 11 from \cite{CGPS}
(in the 2D case), but using the following estimate
$$\|\bv-\bv_{c}\|_{0,T}+h|\bv-\bv_{c}|_{1,T}\leq \widehat{C}_{Clem}h^{1+t}\|\bv\|_{1+t,\widetilde{T}},$$
instead of estimate (4.2) of Theorem~11 from \cite{CGPS},
where $\bv_{c}$ is an adequate Cl\'ement interpolant of degree $k$ of $\bv$
(see \cite[Proposition 4.2]{MRR2015}).
\end{proof}
Now, we are in a position to conclude  that $\widehat{\bT}_h$ converges in norm to $\mathbf{T}$
as $h$ goes to zero.

\begin{corollary}
\label{cotaT}
There exist $C>0$ independent of $h$ and $r_1>0$ (as in Lemma~\ref{LEM:REG}(i)), such that
\begin{equation*}
\|(\mathbf{T}-\widehat{\bT}_h)\bff\|_{1,\O}\leq Ch^{r_1}\|\bff\|_{1,\O}\quad \forall \bff\in\bV.
\end{equation*}
\end{corollary}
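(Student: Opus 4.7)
The plan is to apply Lemma~\ref{lemcotste} and bound each of the four error contributions on its right-hand side using the regularity statement in Lemma~\ref{LEM:REG} together with the polynomial/interpolation estimates of Lemmas~\ref{estima2} and \ref{estima4}.

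First, I would invoke Lemma~\ref{LEM:REG}(i): since $\bff\in\bV\subset[\LO]^2$, the solution $\bu:=\bT\bff$ lies in $[\H^{1+r_1}(\O)]^2$ and satisfies the a priori bound
$$\|\bu\|_{1+r_1,\O}\le C\|\bff\|_{0,\O}\le C\|\bff\|_{1,\O}.$$
Then, choosing $\bu_\pi$ as the piecewise polynomial provided by Lemma~\ref{estima2} applied element-by-element with $t=r_1\le 1\le k$, and choosing $\bu_I\in\bV_h$ as the interpolant given by Lemma~\ref{estima4} with $t=r_1$, I obtain
$$\|\bu-\bu_I\|_{1,\O}+|\bu-\bu_\pi|_{1,h,\O}\le Ch^{r_1}\|\bu\|_{1+r_1,\O}\le Ch^{r_1}\|\bff\|_{1,\O}.$$

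Second, for the two terms in Lemma~\ref{lemcotste} that involve $\bff$, I only need to use the fact that $\bff\in[\H^1(\O)]^2$. Applying Lemmas~\ref{estima2} and \ref{estima4} with $t=0$ element-wise, there exist $\bff_\pi$ and $\bff_I\in\bV_h$ such that
$$\|\bff-\bff_\pi\|_{0,\O}+\|\bff-\bff_I\|_{0,\O}\le Ch\,\|\bff\|_{1,\O},$$
so that $h\bigl(\|\bff-\bff_I\|_{0,\O}+\|\bff-\bff_\pi\|_{0,\O}\bigr)\le Ch^2\|\bff\|_{1,\O}$.

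Plugging these four bounds into Lemma~\ref{lemcotste} yields
$$\|(\bT-\widehat{\bT}_h)\bff\|_{1,\O}\le C(h^2+h^{r_1})\|\bff\|_{1,\O}\le Ch^{r_1}\|\bff\|_{1,\O},$$
where in the last step I use that $r_1\le 1$. I don't expect any genuine obstacle here; the argument is essentially a bookkeeping exercise that matches the regularity exponent $r_1$ from Lemma~\ref{LEM:REG} against the approximation orders allowed by Lemmas~\ref{estima2} and \ref{estima4}. The only point that requires a moment of care is making sure that the interpolant $\bu_I$ provided by Lemma~\ref{estima4} actually lies in the global space $\bV_h$ (i.e.\ that it respects the homogeneous Dirichlet condition on $\G_D$), which is implicit in the statement of that lemma and inherited from the underlying Cl\'ement-type construction used in its proof.
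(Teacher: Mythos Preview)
Your proposal is correct and follows exactly the approach the paper takes: the paper's proof is simply ``The result follows from Lemmas~\ref{estima2}--\ref{estima4} and Lemma~\ref{LEM:REG},'' and you have spelled out precisely how those ingredients combine in Lemma~\ref{lemcotste}. Your additional remark about the boundary behaviour of $\bu_I$ is a reasonable sanity check but is indeed already built into the statement of Lemma~\ref{estima4}.
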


\begin{proof}
The result follows from Lemmas~\ref{estima2}--\ref{estima4}  and Lemma~\ref{LEM:REG}.
\end{proof}
As a direct consequence of Corollary~\ref{cotaT}, standard results about
spectral approximation (see \cite{K}, for instance) show that isolated
parts of $\sp(\bT)$ are approximated by isolated parts of $\sp(\widehat{\bT}_h)$
and therefore by $\sp(\bT_h)$. More precisely,
let $\mu\ne0$ be an isolated eigenvalue of $\bT$ with
multiplicity $m$ and let $\CE$ be its associated eigenspace. Then, there
exist $m$ eigenvalues $\mu^{(1)}_h,\dots,\mu^{(m)}_h$ of $\bT_h$ (repeated
according to their respective multiplicities) which converge to $\mu$.
Let $\CE_h$ be the direct sum of their corresponding associated
eigenspaces.

We recall the definition of the \textit{gap} $\hdel$ between two closed
subspaces $\boldsymbol{\mathcal{X}}$ and $\boldsymbol{\mathcal{Y}}$ of $\bV$:
$$
\hdel(\boldsymbol{\mathcal{X}},\boldsymbol{\mathcal{Y}})
:=\max\left\{\delta(\boldsymbol{\mathcal{X}},\boldsymbol{\mathcal{Y}}),\delta(\boldsymbol{\mathcal{Y}},
\boldsymbol{\mathcal{X}})\right\},$$
where
$$
\delta(\boldsymbol{\mathcal{X}},\boldsymbol{\mathcal{Y}})
:=\sup_{\mathbf{x}\in\boldsymbol{\mathcal{X}}:\ 
\left\|\mathbf{x}\right\|_ {1,\O}=1}\delta(\mathbf{x},\boldsymbol{\mathcal{Y}}),
\quad\text{with }\delta(\mathbf{x},\boldsymbol{\mathcal{Y}}):=\inf_{\mathbf{y}\in\boldsymbol{\mathcal{Y}}}\|\mathbf{x}-\mathbf{y}\|_ {1,\O}.$$

The following error estimates for the approximation of eigenvalues and
eigenfunctions hold true.

\begin{theorem}
\label{gap}
There exists a strictly positive constant $C$ such that
\begin{align*}
\hdel(\CE,\CE_h) 
& \le C\gamma_{h},
\\
\left|\mu-\mu_h^{(i)}\right|
& \le C\gamma_{h},\qquad i=1,\dots,m,
\end{align*}
where
$$
\gamma_{h}:=\sup_{\bff\in\CE:\ \left\|\bff\right\|_{1,\O}=1}
\left\|(\mathbf{T}-\widehat{\bT}_h)\bff\right\|_{1,\O}.
$$
\end{theorem}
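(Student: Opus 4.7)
The plan is to invoke the classical spectral approximation framework of Babu\v{s}ka--Osborn and Kato for compact operators, using the norm convergence $\widehat{\bT}_h\to\bT$ already established in Corollary~\ref{cotaT}. Since $\mu\neq 0$ is an isolated eigenvalue of the compact operator $\bT$ with finite multiplicity $m$, one can encircle $\mu$ by a fixed Jordan curve $\Gamma\subset\rho(\bT)$ that separates $\mu$ from the rest of $\sp(\bT)$. Norm convergence then guarantees that $\Gamma\subset\rho(\widehat{\bT}_h)$ for $h$ small enough, and the spectral projectors
\[
\bE:=\frac{1}{2\pi i}\int_{\Gamma}(z-\bT)^{-1}\,dz,
\qquad
\bE_h:=\frac{1}{2\pi i}\int_{\Gamma}(z-\widehat{\bT}_h)^{-1}\,dz
\]
are well-defined, have ranges $\CE$ and $\CE_h$ respectively (with $\dim\CE_h=m$), and satisfy $\|\bE-\bE_h\|_{\mathcal{L}(\bV)}\to 0$.

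For the gap estimate, I would exploit the standard resolvent identity $(z-\widehat{\bT}_h)^{-1}-(z-\bT)^{-1}=(z-\widehat{\bT}_h)^{-1}(\widehat{\bT}_h-\bT)(z-\bT)^{-1}$ together with uniform boundedness of the resolvent on $\Gamma$ to obtain
\[
\|\bE-\bE_h\|_{\mathcal{L}(\bV)}\le C\,\sup_{z\in\Gamma}\|(\widehat{\bT}_h-\bT)(z-\bT)^{-1}\|_{\mathcal{L}(\bV)}.
\]
Since $(z-\bT)^{-1}$ maps $\CE$ into itself (as $\CE$ is $\bT$-invariant), restricting to $\bff\in\CE$ with $\|\bff\|_{1,\O}=1$ yields $\|\bE-\bE_h\|_{\mathcal{L}(\bV,\CE)}\le C\gamma_h$, from which $\hdel(\CE,\CE_h)\le C\gamma_h$ follows by a standard argument (any unit vector in $\CE$ is mapped by $\bE$ to itself and by $\bE_h$ to an element of $\CE_h$ at distance $\le C\gamma_h$, and the reverse direction is handled symmetrically using the dimensional equality).

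For the eigenvalue estimate, I would work on the $m$-dimensional invariant subspace $\CE$ and compare $\bT|_\CE$ with $\bE_h\widehat{\bT}_h\bE_h|_\CE$ in the spirit of \cite{BO}. More precisely, choosing an $a(\cdot,\cdot)$-orthonormal basis $\{\bff_1,\ldots,\bff_m\}$ of $\CE$, one considers the matrices of $\bT$ and of $\bE_h\widehat{\bT}_h\bE_h$ (or $\widehat{\bT}_h\bE_h$) in this basis. Their entries differ by at most $C\gamma_h$, and the eigenvalues $\mu^{(i)}_h$ of $\widehat{\bT}_h$ near $\mu$ coincide with the eigenvalues of $\widehat{\bT}_h|_{\CE_h}$; continuity of eigenvalues of finite matrices with respect to their entries then delivers $|\mu-\mu_h^{(i)}|\le C\gamma_h$ for $i=1,\ldots,m$.

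The main obstacle I expect is not any single inequality but rather the bookkeeping needed because $\bT$ is $a$-self-adjoint while $\widehat{\bT}_h$ is $a_h$-self-adjoint on the discrete space (and defined on all of $\bV$ only through $\bP_h$): I need to take care that the basis and inner product chosen on $\CE$ are consistent throughout the matrix comparison. This is precisely the setting treated abstractly in the Babu\v{s}ka--Osborn theory \cite{BO}, and invoking that framework directly avoids having to redo the perturbation argument by hand.
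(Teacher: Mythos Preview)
Your proposal is correct and follows essentially the same approach as the paper: both rely on the norm convergence of $\widehat{\bT}_h$ to $\bT$ established in Corollary~\ref{cotaT} and then invoke the abstract Babu\v{s}ka--Osborn theory (Theorems~7.1 and~7.3 in \cite{BO}) to conclude. The paper's proof is in fact a one-line citation of those results, while you have helpfully unpacked what the resolvent/spectral-projector machinery is doing behind the scenes; your final sentence acknowledges that invoking \cite{BO} directly is the cleanest route, which is exactly what the paper does. Your worry about the mismatch between $a$- and $a_h$-self-adjointness is unnecessary here: the $\gamma_h$-order estimates in Theorems~7.1 and~7.3 of \cite{BO} require only norm convergence, not self-adjointness.
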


\begin{proof}
As a consequence of Corollary~\ref{cotaT}, $\widehat{\bT}_h$ converges in norm to $\mathbf{T}$
as $h$ goes to zero. Then, the proof follows as a direct consequence of
Theorems~7.1 and 7.3 from \cite{BO}.
\end{proof}

The theorem above yields error estimates depending on $\gamma_{h}$.
The next step is to show an optimal-order estimate for this term.
\begin{theorem}
\label{cotaT2}
 There exist  $r>0$ and $C>0$, independent of $h$, such that
\begin{equation*}
\|(\mathbf{T}-\widehat{\bT}_h)\bff\|_{1,\O}\leq Ch^{\min\{r,k\}}\|\bff\|_{1,\O} \qquad\forall \bff\in \CE,
\end{equation*}
and consequently,
$\gamma_{h}\leq Ch^{\min\{r,k\}}.$
\end{theorem}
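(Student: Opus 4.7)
The plan is to apply Lemma~\ref{lemcotste} and then bound each of the four terms on its right-hand side by exploiting the enhanced regularity available for eigenfunctions via Lemma~\ref{LEM:REG}(ii).

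First I would observe that for $\bff\in\CE$ with $\mathbf{T}\bff=\mu\bff$, we have $\bu:=\mathbf{T}\bff=\mu\bff$, so both $\bff$ and $\bu$ inherit the regularity of an eigenfunction. By Lemma~\ref{LEM:REG}(ii), for any $r\in(0,r_\O)$ there holds $\bff,\bu\in[\H^{1+r}(\O)]^2$ together with bounds of the form $\|\bff\|_{1+r,\O}\le C\|\bff\|_{0,\O}\le C\|\bff\|_{1,\O}$, and similarly for $\bu$.

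Next, set $t:=\min\{r,k\}$, so that $0\le t\le k$. By Lemma~\ref{estima2}, element by element, there exist $\bu_\pi,\bff_\pi\in[\LO]^2$ with $\bu_\pi|_E,\bff_\pi|_E\in[\bbP_k(E)]^2$ such that
\begin{align*}
|\bu-\bu_\pi|_{1,h,\O} &\le C h^{t}\,|\bu|_{1+t,\O}, \\
\|\bff-\bff_\pi\|_{0,\O} &\le C h^{1+t}\,|\bff|_{1+t,\O}.
\end{align*}
Analogously, by Lemma~\ref{estima4}, there exist virtual element interpolants $\bu_I,\bff_I\in\bV_h$ with
\begin{align*}
\|\bu-\bu_I\|_{1,\O} &\le C h^{t}\,|\bu|_{1+t,\O}, \\
\|\bff-\bff_I\|_{0,\O} &\le C h^{1+t}\,|\bff|_{1+t,\O}.
\end{align*}

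Plugging these into the estimate of Lemma~\ref{lemcotste} gives
\begin{equation*}
\|(\mathbf{T}-\widehat{\bT}_h)\bff\|_{1,\O}\le C\left(h\cdot h^{1+t}|\bff|_{1+t,\O}+h^{t}|\bu|_{1+t,\O}\right)\le Ch^{t}\bigl(|\bff|_{1+t,\O}+|\bu|_{1+t,\O}\bigr),
\end{equation*}
where the $h^{t}$ term from $\bu$ dominates the $h^{2+t}$ term from $\bff$. Since $\bu=\mu\bff$ and the eigenfunction regularity bound gives $|\bff|_{1+t,\O}+|\bu|_{1+t,\O}\le C\|\bff\|_{1,\O}$, we obtain the desired bound with exponent $\min\{r,k\}$, and the estimate on $\gamma_h$ then follows immediately from its definition as a supremum over the unit sphere in $\CE$.

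I do not expect serious obstacles: the estimate of Lemma~\ref{lemcotste} already isolates exactly the four approximation terms that Lemmas~\ref{estima2} and~\ref{estima4} are designed to control. The only subtle point is that the abstract bound of Lemma~\ref{lemcotste} depends on $\|\bu\|_{1+t,\O}$ rather than directly on $\|\bff\|_{1,\O}$; using the identity $\bu=\mu\bff$ (available only because $\bff$ is an eigenfunction, not a generic datum) is the key observation that lets us close the inequality with the norm $\|\bff\|_{1,\O}$ on the right-hand side.
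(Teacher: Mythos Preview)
Your proposal is correct and follows essentially the same route as the paper: the paper's proof simply says ``identical to Corollary~\ref{cotaT} but using Lemma~\ref{LEM:REG}(ii)'', which amounts precisely to invoking Lemma~\ref{lemcotste}, bounding the four approximation terms via Lemmas~\ref{estima2} and~\ref{estima4}, and feeding in the eigenfunction regularity. Your explicit observation that $\bu=\mu\bff$ (so that $\bu$ inherits the full $[\H^{1+r}(\O)]^2$ regularity of Lemma~\ref{LEM:REG}(ii), not merely the $r_1=\min\{r,1\}$ of part~(i)) is exactly the point behind the paper's terse reference to ``the additional regularity from Lemma~\ref{LEM:REG}(ii)''.
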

\begin{proof}
The proof is identical to that of  Corollary~\ref{cotaT},
but using now the additional
regularity from Lemma~\ref{LEM:REG}(ii).
\end{proof}
The error estimate for the eigenvalue $\mu$ of $\bT$ leads to an analogous
estimate for the approximation of the eigenvalue $\l=1/\mu $ of Problem \ref{P1} by means $\mu$
of the discrete eigenvalues $\l_{h}^{i}=1/\mu_{h}^{i}$, $1\leq i\leq m$, of Problem \ref{P3}. However, the 
order of convergence in Theorem \ref{gap} is not optimal for $\mu$ and,
hence, not optimal for $\l$ either. Our next goal is to improve this order.

\begin{theorem}\label{convibration}
There exists $C>0$ independent of $h$ such that
\begin{equation*}
\big|\l-\l_h^{(i)}\big|
\le Ch^{2\min\{r,k\}},\qquad i=1,\ldots,m.
\end{equation*}
\end{theorem}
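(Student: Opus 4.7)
The plan is to start from a Babu\v{s}ka--Osborn-style algebraic identity for the eigenvalue error that accounts for the variational crime intrinsic to VEM. Fix an index $i$ and let $(\l,\bw)\in\CE$ be a continuous eigenpair associated (via the gap $\hdel(\CE,\CE_h)$) with the discrete eigenpair $(\l_h^{(i)},\bw_h^{(i)})$; normalize so that $b(\bw,\bw)=1=b_h(\bw_h^{(i)},\bw_h^{(i)})$. Writing $\bw_h:=\bw_h^{(i)}$ and $\l_h:=\l_h^{(i)}$, I would insert $\pm\bw_h$ in $a(\bw,\bw)$ and $b(\bw,\bw)$, use the continuous eigenrelation $a(\bw,\bv)=\l b(\bw,\bv)$ with $\bv=\bw_h$, and then replace $a(\bw_h,\bw_h)$ and $b(\bw_h,\bw_h)$ by $a_h(\bw_h,\bw_h)$ and $b_h(\bw_h,\bw_h)$ via the discrete eigenrelation. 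A routine rearrangement yields
\begin{equation*}
(\l_h-\l)\,b_h(\bw_h,\bw_h)
= a(\bw-\bw_h,\bw-\bw_h) - \l\, b(\bw-\bw_h,\bw-\bw_h)
+ (a_h-a)(\bw_h,\bw_h) + \l\,(b-b_h)(\bw_h,\bw_h).
\end{equation*}
Since $b_h(\bw_h,\bw_h)=1$, estimating $|\l-\l_h|$ reduces to bounding the four terms on the right-hand side.

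The first term is controlled by Theorem~\ref{cotaT2} combined with Theorem~\ref{gap}, which together give $\|\bw-\bw_h\|_{1,\O}\le C h^{\min\{r,k\}}$ after a suitable choice of representative in $\CE$; thus $a(\bw-\bw_h,\bw-\bw_h)\le C h^{2\min\{r,k\}}$. The second term requires the $\LO$-error bound for the eigenfunctions established in Section~\ref{SEC:L2}, which (by a standard duality argument using Lemma~\ref{LEM:REG}) gives $\|\bw-\bw_h\|_{0,\O}\le C h^{2\min\{r,k\}}$, so $|b(\bw-\bw_h,\bw-\bw_h)|$ is actually of even higher order.

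The main obstacle, as is typical for VEM spectral analysis, is controlling the two consistency terms $(a_h-a)(\bw_h,\bw_h)$ and $(b-b_h)(\bw_h,\bw_h)$. The idea is to exploit the $k$-consistency \eqref{consis0}--\eqref{consis1} by inserting the projections: writing $\bw_h=(\bw_h-\bpisi\bw_h)+\bpisi\bw_h$ elementwise and using that $\bpisi\bw_h\in[\bbP_k(E)]^2$, the polynomial contribution cancels and one is left with
\begin{equation*}
(a_h^E-a^E)(\bw_h,\bw_h) = S_\beps^E(\bw_h-\bpisi\bw_h,\bw_h-\bpisi\bw_h) - a^E(\bw_h-\bpisi\bw_h,\bw_h-\bpisi\bw_h),
\end{equation*}
which by the stability \eqref{stabilS} and the optimality of $\bpisi$ in the $a^E$-seminorm is bounded by $C|\bw_h-\bw_\pi|_{1,E}^2$ for any polynomial $\bw_\pi\in[\bbP_k(E)]^2$. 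Inserting $\pm\bw$, choosing $\bw_\pi$ as in Lemma~\ref{estima2}, summing over $E$, and invoking the $H^1$-convergence gives $|(a_h-a)(\bw_h,\bw_h)|\le C h^{2\min\{r,k\}}$. The term $(b-b_h)(\bw_h,\bw_h)$ is treated in an entirely analogous way using $\bpiO$, \eqref{stabilS0} and the $\LO$-approximation bound of Lemma~\ref{estima2}, yielding the same order.

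Combining the four bounds and using that $b_h(\bw_h,\bw_h)=1$ is bounded away from zero gives $|\l-\l_h^{(i)}|\le C h^{2\min\{r,k\}}$, completing the proof. The whole argument is conceptually simple once the identity is in hand; the delicate step is really the bookkeeping for the consistency terms, where one must be careful that the stabilization contributes only at the optimal $H^1$-approximation order and does not pollute the doubling.
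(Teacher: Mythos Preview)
Your approach is essentially the paper's: derive the algebraic identity expressing $(\l_h-\l)\,b_h(\bw_h,\bw_h)$ as the sum of $a(\bw-\bw_h,\bw-\bw_h)-\l\,b(\bw-\bw_h,\bw-\bw_h)$ and the two VEM consistency defects, then bound the latter elementwise by inserting a polynomial and invoking \eqref{consis0}--\eqref{consis1} with \eqref{stab0}--\eqref{stab2}. One remark: your appeal to Section~\ref{SEC:L2} for the term $b(\bw-\bw_h,\bw-\bw_h)$ is both unnecessary and a forward reference (Section~\ref{SEC:L2} is developed after this theorem and in fact relies on estimates extracted from its proof, cf.\ Remark~\ref{obserbapos}); the paper simply uses the trivial bound $\|\bw-\bw_h\|_{0,\O}^2\le\|\bw-\bw_h\|_{1,\O}^2\le C\gamma_h^2$, which already gives the required order.
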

\begin{proof}
 Let $\bw_h\in \CE_{h}$ be an eigenfunction corresponding to one
 of the eigenvalues $\l_{h}^{(i)}$ $(i=1,\ldots,m)$ with $\left\|\bw_h\right\|_{1,\O}=1$. According to
Theorem \ref{gap},  there exists $(\l,\bw)$ eigenpair  of Problem~\ref{P1}
such that 
\begin{equation}\label{fin4}
\left\|\bw-\bw_h\right\|_{1,\O}\le C\gamma_h.
\end{equation}
From the symmetry of the bilinear forms and the facts that 
$a( \bw,\bv)=\l b(\bw,\bv)$ for all $\bv\in\bV$ (cf. Problem~\ref{P1}) and 
$a_{h}( \bw_{h},\bv_{h})=\l^{(i)}_{h} b(\bw_{h},\bv_{h})$ for all $\bv_h\in\bV_{h}$ (cf.
Problem~\ref{P3}), we have
\begin{align*}
a(\bw-\bw_{h},\bw-\bw_{h})-\l b(\bw-\bw_{h},\bw-\bw_{h})&=a(\bw_{h},\bw_{h})-\l b(\bw_{h},\bw_{h})\\
&\hspace{-3cm}=a(\bw_{h},\bw_{h})-a_{h}(\bw_{h},\bw_{h})+\l^{(i)}_{h}\left[b_{h}(\bw_{h},\bw_{h})- b(\bw_{h},\bw_{h})\right]\\
&\hspace{-3cm}+(\l^{(i)}_{h}-\l) b(\bw_{h},\bw_{h}),
\end{align*}
thus, we obtain the following identity:
\begin{align}
\nonumber
(\l^{(i)}_{h}-\l) b(\bw_{h},\bw_{h})&=a(\bw-\bw_{h},\bw-\bw_{h})-\l b(\bw-\bw_{h},\bw-\bw_{h})\\
&+a_{h}(\bw_{h},\bw_{h})-a(\bw_{h},\bw_{h})+\l^{(i)}_{h}\left[b(\bw_{h},\bw_{h})- b_{h}(\bw_{h},\bw_{h})\right].\label{45}
\end{align}
The next step is to bound each term on the right hand side above.
The first and the second ones are easily bounded using the Cauchy-Schwarz
inequality and \eqref{fin4}:
\begin{align}\label{sinnum}
\left|a(\bw-\bw_{h},\bw-\bw_{h})-\l b(\bw-\bw_{h},\bw-\bw_{h})\right| &\leq C\|\bw-\bw_{h}\|_{1,\O}^{2} \leq C\gamma_h^2.
\end{align}
For the third term, let  $\bw_{\pi}\in [\LO]^{2}$
such that $\bw_{\pi}|_{E}\in [\bbP_k(E)]^2$ $\forall E\in\T_{h}$.
From the definition of $a_{h}^{E}(\cdot,\cdot)$ (cf \eqref{bilineal_ae}),
adding and subtracting $\bw_{\pi}$ and using the \textit{consistency} property (cf \eqref{consis0})
we obtain
\begin{align*}
\vert a_{h}(\bw_{h},\bw_{h})-a(\bw_{h},\bw_{h})\vert&=\sum_{E\in\T_{h}}\left[a_{h}^{E}(\bw_{h},\bw_{h})-a^{E}(\bw_{h},\bw_{h})\right]\\
&= \sum_{E\in\T_{h}}\left[a_{h}^{E}(\bw_{h}-\bw_{\pi},\bw_{h}-\bw_{\pi})+a^{E}(\bw_{h}-\bw_{\pi},\bw_{h}-\bw_{\pi})\right]\\
&\leq C\sum_{E\in\T_{h}}\left| \bw_{h}-\bw_{\pi}\right|_{1,E}^{2}\leq C\sum_{E\in\T_{h}}\left(\left| \bw-\bw_{h}\right|_{1,E}^{2}+\left| \bw-\bw_{\pi}\right|_{1,E}^{2}\right).
\end{align*}
Then, from the last  inequality, Lemma \ref{estima2} and   \eqref{fin4},we obtain
\begin{align}
\label{doblea}
\left|a_{h}(\bw_{h},\bw_{h})-a(\bw_{h},\bw_{h})\right|\leq  C\gamma_h^2.
\end{align}

For the fourth term, repeating similar arguments to the previous case,
but using the \textit{consistency} property (cf \eqref{consis1}) we have
\begin{align*}
\left|b_{h}(\bw_{h},\bw_{h})-b(\bw_{h},\bw_{h})\right|&\leq C\sum_{E\in\T_{h}}\left\| \bw_{h}-\bw_{\pi}\right\|_{0,E}^{2}\leq C\sum_{E\in\T_{h}}\left(\left\| \bw-\bw_{h}\right\|_{0,E}^{2}+\left\| \bw-\bw_{\pi}\right\|_{0,E}^{2}\right).
\end{align*}
Then, from the last  inequality, Lemma~\ref{estima2} and \eqref{fin4}, we have
\begin{align}
\label{dobleb}
\left|b_{h}(\bw_{h},\bw_{h})-b(\bw_{h},\bw_{h})\right|&\leq C\gamma_h^2.
\end{align}
On the other hand, from the Korn's inequality and Lemma~\ref{ha-elipt-disc},
together with the fact that $\l_h^{(i)}\to\l$ as $h$ goes to zero, we have that
\begin{equation}
\label{439}
b_h(\bw_h,\bw_h)=\frac{a_{h}(\bw_{h},\bw_{h})}{\l_h^{(i)}}\geq
C\dfrac{\| \bw_h\|_{1,\O}^2}{\l_h^{(i)}}=\tilde{C}>0.
\end{equation}


Therefore, the theorem follows from \eqref{45}--\eqref{439}
and the fact that $\gamma_{h}\leq Ch^{\min\{r,k\}}$.
\end{proof}

\begin{remark}
\label{obserbapos}
The above theorem establishes that the resulting
discrete scheme provides a double order estimates
for the eigenvalues. However, we can also conclude
the following estimate which will be useful in the a posteriori
error analysis.
\begin{align}\label{cotavalor}
\left|\l-\l^{(i)}_{h}\right|&\leq C\left[\|\bw-\bw_{h}\|_{1,\O}^{2}+
\sum_{E\in\T_{h}}\left(\left\| \bw-\bpiO\bw_{h}\right\|_{0,E}^{2}
+\left|\bw-\bpisi\bw_{h}\right|_{1,E}^{2}\right)\right]\qquad i=1,\ldots,m.
\end{align}
In fact, repeating the arguments used in the proof of the above theorem (see \eqref{45}) we have 
\begin{align}\label{lamdass}
|\l-\l_h^{(i)}|&\leq C\left[\|\bw-\bw_h\|_{1,\O}^2+\vert a_h(\bw_h,\bw_h)-a(\bw_h,\bw_h)\vert+\vert b(\bw_{h},\bw_{h})- b_{h}(\bw_{h},\bw_{h})\vert\right].
\end{align}
Then, for the second and third terms on the right
hand side of \eqref{lamdass}, we use the definition
of $a_{h}^{E}(\cdot,\cdot)$ (cf \eqref{bilineal_ae}),
adding and subtracting $\bpisi\bw_{h}\in[\bbP_k(E)]^2$ $\forall E\in\T_{h}$
and using the \textit{consistency} property (cf \eqref{consis0}) we have
\begin{align*}
\vert a_h(\bw_h,\bw_h)-a(\bw_h,\bw_h)\vert&\leq C\sum_{E\in\CT_h}|\bw_h-\bpisi \bw_h|_{1,E}^{2}\leq C\sum_{E\in\CT_h}\left(|\bw- \bw_h|_{1,E}^{2}+|\bw-\bpisi \bw_h|_{1,E}^{2}\right)
\end{align*}
For the fourth and fifth terms on the right hand side of
\eqref{lamdass}, we use the definition of $b_{h}^{E}(\cdot,\cdot)$
(cf. \eqref{bilineal_be}), adding and subtracting $\bpiO\bw_{h}\in[\bbP_k(E)]^2$ $\forall E\in\T_{h}$
and using \textit{consistency} property (cf \eqref{consis1}) we obtain
\begin{align*}
\vert b(\bw_{h},\bw_{h})- b_{h}(\bw_{h},\bw_{h})\vert \leq C\sum_{E\in\CT_h}\left(\|\bw-\bw_h\|_{0,E}^{2}
+\|\bw-\bpiO \bw_h\|_{0,E}^{2}\right).
\end{align*}
Thus, \eqref{cotavalor} follows from the previous inequalities. 
\end{remark}

\section{Error estimates for the eigenfunctions in the $[\LO]^{2}$-norm}
\label{SEC:L2}

Our next goal is to derive an error estimate for
the eigenfunctions in the $[\LO]^{2}$-norm.
The main result of this section is the following bound.

\begin{theorem}
\label{asintotico}
There exists $C>0$ independent of $h$ such that
\begin{equation}
\label{meta}
\left\|\bw-\bw_h\right\|_{0,\O}\leq C h^{r_1}\left\{\|\bw-\bw_{h}\|_{1,\O}
+\left[\sum_{E\in\T_{h}}\left(\left\| \bw-\bpiO\bw_{h}\right\|_{0,E}^{2}
+\left| \bw-\bpisi\bw_{h}\right|_{1,E}^{2}\right)\right]^{1/2}\right\}.
\end{equation}
\end{theorem}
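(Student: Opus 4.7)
The natural approach is an Aubin–Nitsche duality argument, adapted to handle the VEM non-conformity $a_h \ne a$ and $b_h \ne b$. I would introduce the dual source problem: find $\bphi \in \bV$ with $a(\bphi, \bv) = (\bw - \bw_h, \bv)_{0,\O}$ for all $\bv \in \bV$. By Lemma~\ref{LEM:REG}(i) applied to this source problem, $\bphi \in [\H^{1+r_1}(\O)]^2$ with $\|\bphi\|_{1+r_1,\O} \le C \|\bw - \bw_h\|_{0,\O}$, which is exactly what supplies the overall factor $h^{r_1}$ in \eqref{meta}. Testing with $\bv = \bw - \bw_h$ yields $\|\bw - \bw_h\|_{0,\O}^2 = a(\bphi, \bw - \bw_h)$, and splitting through a VEM interpolant $\bphi_I \in \bV_h$ of $\bphi$ (from Lemma~\ref{estima4}) writes this as $a(\bphi - \bphi_I, \bw - \bw_h) + a(\bphi_I, \bw - \bw_h)$. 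The first summand is bounded immediately by Cauchy–Schwarz and Lemma~\ref{estima4} as $C h^{r_1} \|\bw - \bw_h\|_{0,\O} \|\bw - \bw_h\|_{1,\O}$, which recovers the first contribution on the right-hand side of \eqref{meta}.

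For the second summand, since $\bphi_I \in \bV \cap \bV_h$, I would exploit the continuous eigenvalue equation $a(\bw, \bphi_I) = \lambda b(\bw, \bphi_I)$ together with the discrete one $a_h(\bw_h, \bphi_I) = \lambda_h b_h(\bw_h, \bphi_I)$, and insert $\pm a_h(\bw_h, \bphi_I)$ to obtain the Strang-like decomposition $a(\bphi_I, \bw - \bw_h) = \lambda b(\bw - \bw_h, \bphi_I) + (\lambda - \lambda_h) b_h(\bw_h, \bphi_I) + \lambda\,[b - b_h](\bw_h, \bphi_I) + [a_h - a](\bw_h, \bphi_I)$. The VEM-consistency brackets are then handled exactly as in Remark~\ref{obserbapos} and the treatment of $T_2$ in Lemma~\ref{lemcotste}: inserting element-wise polynomial approximations of $\bw$ and $\bphi$ and using the defining orthogonality $a^E(\bv_h - \bpisi \bv_h, \bp) = 0$ for $\bp \in [\bbP_k(E)]^2$ (and analogously for $\bpiO$), combined with the stability \eqref{stab0}--\eqref{stab2}. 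This produces exactly the projection-error terms $|\bw - \bpisi \bw_h|_{1,E}$ and $\|\bw - \bpiO \bw_h\|_{0,E}$ weighted by $|\bphi_I - \bpisi \bphi_I|_{1,E}$-type factors; bounding the latter by $C h^{r_1} \|\bphi\|_{1+r_1,\O}$ through Lemmas~\ref{estima2}--\ref{estima4} reconstructs the second bracket on the right-hand side of \eqref{meta}. The eigenvalue-mismatch term $(\lambda - \lambda_h) b_h(\bw_h, \bphi_I)$ is already $O(h^{2 r_1})$ by Theorem~\ref{convibration} and is therefore absorbed.

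The crux is the remaining term $\lambda b(\bw - \bw_h, \bphi_I)$. I would decompose it as $\lambda b(\bw - \bw_h, \bphi) + \lambda b(\bw - \bw_h, \bphi_I - \bphi)$; the second piece is higher-order via $L^2$-approximation of $\bphi$ by $\bphi_I$, whereas the first piece equals $\lambda \varrho\, a(\bphi, \bphi)$ upon testing the dual problem with $\bphi$ itself, and is only $O(\|\bw - \bw_h\|_{0,\O}^2)$ a priori. Preventing this from swamping the left-hand side is the main obstacle: I expect to handle it either by an absorption argument valid for $h$ sufficiently small (using the already-established convergence $\bw_h \to \bw$), or by exploiting the self-adjointness of $\bT$ together with the near-orthogonality of $\bw - \bw_h$ to the eigenspace $\CE$ to show that the offending piece is genuinely of higher order. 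Once this term is under control, dividing by $\|\bw - \bw_h\|_{0,\O}$ and using the regularity bound on $\bphi$ to eliminate $\|\bphi\|_{1+r_1,\O}$ from the right-hand side produces \eqref{meta}.
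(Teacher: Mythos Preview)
Your duality setup is natural, and the Strang-type splitting you describe does produce the projection terms in \eqref{meta}. But the ``main obstacle'' you flag is a genuine gap that neither of your proposed fixes closes. After dividing through by $\|\bw-\bw_h\|_{0,\O}$, the offending contribution $\lambda\,b(\bw-\bw_h,\bphi)$ leaves a term of size $C\,\|\bw-\bw_h\|_{0,\O}$ with a constant that does \emph{not} tend to zero with $h$; mere convergence $\bw_h\to\bw$ supplies no small factor, so absorption fails. The near-orthogonality idea fares no better: writing $\bw-\bw_h=\alpha\bw+\bz$ with $\bz\perp_{L^2}\bw$ and $\alpha=\tfrac12\|\bw-\bw_h\|_{0,\O}^2$, the $\alpha$-part is indeed higher order, but since $\bphi=\widetilde{\bT}\big(\varrho^{-1}(\bw-\bw_h)\big)$ one is left with $\lambda\,(\bz,\widetilde{\bT}\bz)_{0,\O}\le (\mu_{*}/\mu)\,\|\bz\|_{0,\O}^{2}$, where $\mu_{*}$ is the spectral radius of $\widetilde{\bT}$ on $\CE^{\perp}$. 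The ratio $\mu_{*}/\mu$ is a fixed constant, and for any eigenvalue other than the largest $\mu$ it is at least $1$, so the term cannot be absorbed.

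The paper circumvents this circularity by \emph{not} running the duality directly on the eigenfunction error. Instead it (i) proves an $L^2$ bound for the \emph{source} problem (Lemma~\ref{lasttheo}): for $\bff\in\CE$ fixed, a clean Aubin--Nitsche argument gives $\|(\bT-\widehat{\bT}_h)\bff\|_{0,\O}\le Ch^{r_1}\{\cdots\}$ with no self-referential term, because $\bff$ is data rather than the error; (ii) establishes $L^2$-norm convergence $\widehat{\bT}_h\to\widetilde{\bT}$ (Lemma~\ref{convnormnew}) and then invokes Babu\v ska--Osborn Theorem~7.1 in the $[\LO]^2$ framework (Lemma~\ref{erroga}) to obtain $\|\bw-\bw_h\|_{0,\O}\le C\sup_{\bff\in\CE}\|(\widetilde{\bT}-\widehat{\bT}_h)\bff\|_{0,\O}$; and (iii) converts the source quantities $\bu=\bT\bw$, $\bu_h=\widehat{\bT}_h\bw$ appearing in the right-hand side of Lemma~\ref{erroga} back into $\bw,\bw_h$ via $\bu=\bw/\lambda$ and a direct estimate of $\|\bu_h-\bw_h/\lambda_h\|_{1,\O}$. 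The passage through the abstract spectral theory in step~(ii) is precisely what replaces the missing small constant your direct argument lacks.
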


The proof of the above result will follow by combining Lemmas~\ref{lasttheo},
\ref{convnormnew} and \ref{erroga} shown in the sequel.


\begin{lemma}
\label{lasttheo}
There exist $C>0$ and $r_1>0$ (as in Lemma~\ref{LEM:REG}(i))
such that, for all $\bff\in\CE$, if $\bu:=\mathbf{T}\bff$
and $\bu_h:=\widehat{\bT}_{h}\bff=\mathbf{T}_h\bP_h\bff$, then
$$\|\bu-\bu_{h}\|_{0,\O}\leq C h^{r_1}\left\{\|\bu-\bu_{h}\|_{1,\O}+\left[\sum_{E\in\T_{h}}\left(\left\| \bu-\bpiO\bu_{h}\right\|_{0,E}^{2}+\left| \bu-\bpisi\bu_{h}\right|_{1,E}^{2}\right)\right]^{1/2}\right\}.$$
\end{lemma}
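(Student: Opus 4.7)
The plan is a standard Aubin--Nitsche duality argument that leverages the additional elliptic regularity supplied by Lemma~\ref{LEM:REG}(i). Setting $\be:=\bu-\bu_h$, I would first consider the auxiliary dual problem: find $\bphi\in\bV$ solving $a(\bphi,\bv)=b(\be,\bv)$ for all $\bv\in\bV$. Since $\be\in[\LO]^{2}$, Lemma~\ref{LEM:REG}(i) yields $\bphi\in[\H^{1+r_1}(\O)]^{2}$ with $\|\bphi\|_{1+r_1,\O}\le C\|\be\|_{0,\O}$. Taking $\bv=\be$ gives the starting identity $\varrho\|\be\|_{0,\O}^{2}=b(\be,\be)=a(\bphi,\be)$.

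Next, I would introduce the virtual interpolant $\bphi_I\in\bV_h$ of $\bphi$ from Lemma~\ref{estima4} and a piecewise polynomial approximant $\bphi_\pi$ of $\bphi$ from Lemma~\ref{estima2}; both enjoy approximation of order $h^{r_1}$ in the broken $H^{1}$-seminorm and of order $h^{1+r_1}$ in the $[\LO]^{2}$-norm. Using the defining identities $a(\bu,\bphi_I)=b(\bff,\bphi_I)$, $a_h(\bu_h,\bphi_I)=b_h(\bP_h\bff,\bphi_I)$, and $b(\bff,\bphi_I)=b(\bP_h\bff,\bphi_I)$ (from~\eqref{prop}), straightforward algebra produces the error representation
$$
a(\bphi,\be)=a(\bphi-\bphi_I,\be)+[a_h-a](\bphi_I,\bu_h)+[b-b_h](\bP_h\bff,\bphi_I)=:R_1+R_2+R_3.
$$
$R_1$ is immediate: continuity of $a$ and Lemma~\ref{estima4} give $|R_1|\le Ch^{r_1}\|\bphi\|_{1+r_1,\O}\|\bu-\bu_h\|_{1,\O}$. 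For $R_2$ I would apply the standard VEM manipulation: by the consistency property~\eqref{consis0} with $\bphi_\pi$ in the first slot and $\bpisi\bu_h\in[\bbP_k(E)]^{2}$ in the second, together with the symmetry of $a^{E}$ and $a_h^{E}$, one rewrites $[a_h^{E}-a^{E}](\bphi_I,\bu_h)=[a_h^{E}-a^{E}](\bphi_I-\bphi_\pi,\bu_h-\bpisi\bu_h)$; stability~\eqref{stab0}, Cauchy--Schwarz, and the triangle bound $|\bu_h-\bpisi\bu_h|_{1,E}\le|\bu-\bu_h|_{1,E}+|\bu-\bpisi\bu_h|_{1,E}$ then produce
$$
|R_2|\le Ch^{r_1}\|\bphi\|_{1+r_1,\O}\Bigl(\|\bu-\bu_h\|_{1,\O}+\Bigl[\sum_{E\in\T_h}|\bu-\bpisi\bu_h|_{1,E}^{2}\Bigr]^{1/2}\Bigr).
$$
An analogous computation with~\eqref{consis1},~\eqref{stab2} and $\bpiO$ in place of $\bpisi$ bounds $R_3$ by $C\bigl(\sum_E\|\bP_h\bff-\bpiO\bP_h\bff\|_{0,E}^{2}\bigr)^{1/2}\bigl(\sum_E\|\bphi_I-\bpiO\bphi_I\|_{0,E}^{2}\bigr)^{1/2}$.

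The main obstacle is this last term $R_3$, which does not directly exhibit the factor $\|\bu-\bpiO\bu_h\|_{0,E}$ appearing in the target bound. The resolution is to exploit the extra regularity of eigenfunctions furnished by Lemma~\ref{LEM:REG}(ii), namely $\bff\in\CE\subset[\H^{1+r_1}(\O)]^{2}$: the $[\LO]^{2}$-best-approximation property of $\bpiO$ combined with Lemma~\ref{estima2}, Lemma~\ref{estima4} and~\eqref{errorpro} yields $\bigl(\sum_E\|\bP_h\bff-\bpiO\bP_h\bff\|_{0,E}^{2}\bigr)^{1/2}\le Ch^{1+r_1}\|\bff\|_{1+r_1,\O}$ and likewise $\bigl(\sum_E\|\bphi_I-\bpiO\bphi_I\|_{0,E}^{2}\bigr)^{1/2}\le Ch^{1+r_1}\|\bphi\|_{1+r_1,\O}$. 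Consequently $|R_3|\le Ch^{2(1+r_1)}\|\be\|_{0,\O}$, strictly of higher order than the target right-hand side and therefore absorbable into the $\|\bu-\bpiO\bu_h\|_{0,E}$ contribution already present on it. Gathering the three bounds, using $\|\bphi\|_{1+r_1,\O}\le C\|\be\|_{0,\O}$, and dividing by $\|\be\|_{0,\O}$ produces the claimed inequality.
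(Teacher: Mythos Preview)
Your handling of $R_1$ and $R_2$ matches the paper's argument, and the decomposition itself is the same one the authors use. The gap is in $R_3$. After dividing by $\|\be\|_{0,\O}$, your bound on $R_3$ leaves the residual $Ch^{2(1+r_1)}\|\bff\|_{1+r_1,\O}$. Calling this ``higher order'' does not make it bounded by the stated right-hand side: that right-hand side contains only $\|\bu-\bu_h\|_{1,\O}$ and the two projection discrepancies, and none of these admits any a priori lower bound in terms of $h$ or $\|\bff\|$. The terms $\|\bu-\bpiO\bu_h\|_{0,E}$ could in principle be arbitrarily small relative to $h^{2+r_1}\|\bff\|_{1+r_1,\O}$, so there is nothing into which the spurious term can be ``absorbed''. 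The lemma as stated is therefore not established by your argument.

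The missing idea is precisely why the hypothesis $\bff\in\CE$ is present: since $\bu=\bT\bff=\mu\bff$, one has $\bP_h\bff=\mu^{-1}\bP_h\bu$. In the first slot of $R_3$ subtract the polynomial $\mu^{-1}\bpiO\bu_h$ rather than $\bpiO\bP_h\bff$; consistency~\eqref{consis1} and stability~\eqref{stab2} then yield
\[
|R_3|\le C\Bigl(\sum_{E\in\T_h}\bigl\|\bP_h\bff-\mu^{-1}\bpiO\bu_h\bigr\|_{0,E}^{2}\Bigr)^{1/2}\Bigl(\sum_{E\in\T_h}\|\bphi_I-\bphi_\pi\|_{0,E}^{2}\Bigr)^{1/2},
\]
and the first factor equals $|\mu|^{-1}\bigl(\sum_E\|\bP_h\bu-\bpiO\bu_h\|_{0,E}^{2}\bigr)^{1/2}$, which by the triangle inequality and~\eqref{errorpro} is controlled by $\|\bu-\bu_h\|_{0,\O}+\bigl(\sum_E\|\bu-\bpiO\bu_h\|_{0,E}^{2}\bigr)^{1/2}$. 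Combined with the $Ch^{1+r_1}\|\be\|_{0,\O}$ bound on the second factor, this produces exactly the $\|\bu-\bpiO\bu_h\|_{0,E}$ contribution that the lemma requires, with no extraneous $\|\bff\|_{1+r_1,\O}$.
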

\begin{proof}
Let $\bv\in \bV$ the unique solution of the following problem:
\begin{equation*}
a(\bv,\bta)=b(\bu-\bu_{h},\bta)\qquad\forall \bta\in \bV.
\end{equation*}
Therefore, $\bv=\bT(\bu-\bu_{h})$, so that according to Lemma~\ref{LEM:REG}(i),
there exists $r_1>0$ such that $\bv\in[\H^{1+r_1}(\O)]^2$  and 
\begin{equation}
\label{L2error}
\|\bv\|_{1+r_1,\O}\leq C\|\bu-\bu_{h}\|_{0,\O} \quad\text{ with } C=C(\O,\mu_{S},\l_{S}).
\end{equation} 
Let $\bv_{I}\in \bV_{h}$ such that the estimate of Lemma~\ref{estima4} holds true.
Then, by simple manipulations, we have that
\begin{align}
\nonumber
\|\bu-\bu_{h}\|_{0,\O}^{2}&=a(\bu-\bu_{h},\bv-\bv_{I})+a(\bu-\bu_{h},\bv_{I})\\ \nonumber
&\leq C h^{r_1}|\bu-\bu_{h}|_{1,\O}|\bv|_{1+r_1,\O}+a(\bu-\bu_{h},\bv_{I})\\ 
&\leq C h^{r_1}\|\bu-\bu_{h}\|_{1,\O}\|\bu-\bu_{h}\|_{0,\O}+a(\bu-\bu_{h},\bv_{I}).
\label{l2estimate}
\end{align}

For the second term on the right hand side above, we have
the following equality
\begin{align}
\label{do}
a(\bu-\bu_{h},\bv_{I})&=\underbrace{a_{h}(\bu_{h},\bv_{I})-a(\bu_{h},\bv_{I})}_{B_1}
+\underbrace{b(\bff,\bv_{I})-b_{h}(\bP_h\bff,\bv_{I})}_{B_{2}},
\end{align}
where we have used \eqref{4}, added and subtracted $b_{h}(\bP_h\bff,\bv_{I})$
and \eqref{T2}.

To bound the term $B_{1}$, we consider $\bv_{\pi}\in [\LO]^{2}$
such that $\bv_{\pi}|_{E}\in[\bbP_k(E)]^2$ $\forall E\in\T_{h}$
and estimate of Lemma~\ref{estima2} holds true.
Then, using the  \textit{consistency} property
(cf \eqref{consis0}) twice and the \textit{stability}
property  (cf \eqref{stab0}), we obtain
\begin{align*}
B_{1}&=\sum_{E\in \CT_{h}}\left[a_{h}^{E}(\bu_{h},\bv_{I})-a^{E}(\bu_{h},\bv_{I})\right]
= \sum_{E\in\T_{h}}\left[a_{h}^{E}(\bu_{h}-\bpisi\bu_{h},\bv_{I}-\bv_{\Pi})+a^{E}(\bu_{h}-\bpisi\bu_{h},\bv_{I}-\bv_{\pi})\right]\\
&\leq C\left(\sum_{E\in\T_{h}}\left| \bu_{h}-\bpisi\bu_{h}\right|_{1,E}^{2}\right)^{1/2}\left(\sum_{E\in\T_{h}}\left| \bv_{I}-\bv_{\pi}\right|_{1,E}^{2}\right)^{1/2}\\
&\leq C\left[\sum_{E\in\T_{h}}\left(\left| \bu-\bu_{h}\right|_{1,E}^{2}
+\left| \bu-\bpisi\bu_{h}\right|_{1,E}^{2}\right)\right]^{1/2}\left[\sum_{E\in\T_{h}}\left(\left| \bv-\bv_{I}\right|_{1,E}^{2}
+\left| \bv-\bv_{\pi}\right|_{1,E}^{2}\right)\right]^{1/2}\\
&\leq C h^{r_1}\left[\left\| \bu-\bu_{h}\right\|_{1,\O}
+\left(\sum_{E\in\T_{h}}\left| \bu-\bpisi\bu_{h}\right|_{1,E}^{2}\right)^{1/2}\right]|\bv|_{1+r_1,\O},
\end{align*}
where for the last inequality, we have used Lemmas~\ref{estima2}
and \ref{estima4}. Then, from \eqref{L2error}, we obtain
\begin{align}
\label{l2a}
B_{1}\leq Ch^{r_1}\left[\left\| \bu-\bu_{h}\right\|_{1,\O}
+\left(\sum_{E\in\T_{h}}\left| \bu-\bpisi\bu_{h}\right|_{1,E}^{2}\right)^{1/2}\right]\|\bu-\bu_{h}\|_{0,\O}.
\end{align}
For the term $B_{2}$, we use the fact that $\bff\in \CE$,
$\bu=\mathbf{T}\bff=\mu\bff$, \eqref{prop}, the  \textit{consistency} property
\eqref{consis1} twice and the \textit{stability} property (cf \eqref{stab2}), to obtain
\begin{align*}
B_{2}&=b(\bP_h\bff,\bv_{I})-b_{h}(\bP_h\bff,\bv_{I})=\sum_{E\in \CT_{h}}\left[b^{E}(\bP_h\bff,\bv_{I})-b_{h}^{E}(\bP_h\bff,\bv_{I})\right]\\
&= \sum_{E\in\T_{h}}\left[b^{E}(\bP_h\bff-\mu^{-1}\bpiO\bu_{h},\bv_{I}-\bv_{\pi})-b_{h}^{E}(\bP_h\bff-\mu^{-1}\bpiO\bu_{h},\bv_{I}-\bv_{\pi})\right]\\
&\leq C\left(\sum_{E\in\T_{h}}\left\| \bP_h\bff-\mu^{-1}\bpiO\bu_{h}\right\|_{0,E}^{2}\right)^{1/2}\left(\sum_{E\in\T_{h}}\left\| \bv_{I}-\bv_{\pi}\right\|_{0,E}^{2}\right)^{1/2}\\
&\leq Ch^{1+r_1}\left(\sum_{E\in\T_{h}}\left\| \bP_h\bff
-\mu^{-1}\bpiO\bu_{h}\right\|_{0,E}^{2}\right)^{1/2}\|\bu-\bu_{h}\|_{0,\O},
\end{align*}
where for the last inequality,
we have used Lemmas \ref{estima2} and \ref{estima4} together with \eqref{L2error}.
Now, we have that  
\begin{align*}
\left(\sum_{E\in\T_{h}}\left\| \bP_h\bff-\mu^{-1}\bpiO\bu_{h}\right\|_{0,E}^{2}\right)^{1/2}&=\left|\mu^{-1}\right|\left(\sum_{E\in\T_{h}}\left\| \bP_h\bu-\bpiO\bu_{h}\right\|_{0,E}^{2}\right)^{1/2}\\
&\leq
\left|\mu^{-1}\right|\left[\sum_{E\in\T_{h}}\left(\left\| \bP_h\bu-\bu\right\|_{0,E}^{2}
+\left\| \bu-\bpiO\bu_{h}\right\|_{0,E}^{2}\right)\right]^{1/2}\\
&\leq C\left[\left\| \bu-\bP_h\bu\right\|_{0,\O}+\left(\sum_{E\in\T_{h}}\left\| \bu-\bpiO\bu_{h}\right\|_{0,E}^{2}\right)^{1/2}\right]\\
&\leq C\left[\left\| \bu-\bu_{h}\right\|_{0,\O}+\left(\sum_{E\in\T_{h}}\left\| \bu-\bpiO\bu_{h}\right\|_{0,E}^{2}\right)^{1/2}\right],
\end{align*}
where we have used the fact that $\bff\in \CE$,
$\bu=\mathbf{T}\bff=\mu\bff$ and the stability property of $\bP_{h}$.
Thus, we obtain
\begin{align}
\label{bl2}
B_{2}\leq Ch^{1+r_1}\left[\left\| \bu-\bu_{h}\right\|_{1,\O}
+\left(\sum_{E\in\T_{h}}\left\| \bu-\bpiO\bu_{h}\right\|_{0,E}^{2}\right)^{1/2}\right]\|\bu-\bu_{h}\|_{0,\O}.
\end{align}

Finally, combining \eqref{do}--\eqref{bl2}, with \eqref{l2estimate}
allow us to conclude the proof. 
\end{proof}

The next step is to define a solution operator on the space $[\LO]^{2}$:
\begin{align*}
\widetilde{\mathbf{T}}:\ [\LO]^{2} & \longrightarrow [\LO]^{2},
\\
\widetilde{\bff} & \longmapsto \widetilde{\mathbf{T}}\widetilde{\bff}:=\widetilde{\bu},
\end{align*}
where $\widetilde{\bu}\in\bV$ is the unique solution of the following problem:
\begin{equation}
\label{T14}
a(\widetilde{\bu},\bv)=b(\widetilde{\bff}, \bv)\qquad\forall \bv\in\bV.
\end{equation}
It is easy to check that the operator $\widetilde{\mathbf{T}}$
is compact and self-adjoint. Moreover, the spectra of $\mathbf{T}$
and  $\widetilde{\mathbf{T}}$ coincide.

Now, we will establish the convergence  of $\widehat{\bT}_{h}$ to
 $\widetilde{\mathbf{T}}$.

\begin{lemma}
\label{convnormnew}
There exist $C>0$ and $r_1>0$ (as in Lemma~\ref{LEM:REG}(i)) such that
$$
\big\|(\widetilde{\mathbf{T}}-\widehat{\bT}_{h})\bff\big\|_{0,\O}
\leq Ch^{r_1}\big\|\bff\big\|_{0,\O}\quad\forall \bff\in[\LO]^2.
$$
\end{lemma}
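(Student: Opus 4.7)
The strategy is to derive the $[\LO]^{2}$-estimate as a trivial consequence of the stronger $H^{1}$-bound
\[
\|(\widetilde{\bT}-\widehat{\bT}_{h})\bff\|_{1,\O}\le Ch^{r_{1}}\|\bff\|_{0,\O}\qquad\forall\bff\in[\LO]^{2},
\]
which together with $\|\cdot\|_{0,\O}\le\|\cdot\|_{1,\O}$ yields the claim at once. Heuristically, this is just Corollary~\ref{cotaT} upgraded in the data regularity: the source $\bff$ is taken in $[\LO]^{2}$ instead of $\bV$, and the factor $\|\bff\|_{0,\O}$ replaces $\|\bff\|_{1,\O}$. Fix $\bff\in[\LO]^{2}$ and set $\widetilde{\bu}:=\widetilde{\bT}\bff$, $\bu_{h}:=\widehat{\bT}_{h}\bff=\bT_{h}\bP_{h}\bff$. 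The key input is Lemma~\ref{LEM:REG}(i): since $\bff\in[\LO]^{2}$, the solution $\widetilde{\bu}$ of \eqref{T14} lies in $[\H^{1+r_{1}}(\O)]^{2}$ with $\|\widetilde{\bu}\|_{1+r_{1},\O}\le C\|\bff\|_{0,\O}$.

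From here I would essentially replay the proof of Lemma~\ref{lemcotste}. Choose $\bu_{I}\in\bV_{h}$ as the VEM-interpolant from Lemma~\ref{estima4} and a piecewise polynomial approximant $\bu_{\pi}$ from Lemma~\ref{estima2}; then test the discrete ellipticity (Lemma~\ref{ha-elipt-disc}) with $\bv_{h}:=\bu_{h}-\bu_{I}$ and subtract \eqref{T2} from \eqref{T14}. This produces the same $T_{1}+T_{2}$ decomposition as in the original proof. The term $T_{2}$ is handled verbatim and is bounded by $C(\|\widetilde{\bu}-\bu_{I}\|_{1,\O}+|\widetilde{\bu}-\bu_{\pi}|_{1,h,\O})\|\bv_{h}\|_{1,\O}$, which via Lemmas~\ref{estima2}--\ref{estima4} and the regularity of $\widetilde{\bu}$ yields $|T_{2}|\le Ch^{r_{1}}\|\widetilde{\bu}\|_{1+r_{1},\O}\|\bv_{h}\|_{1,\O}\le Ch^{r_{1}}\|\bff\|_{0,\O}\|\bv_{h}\|_{1,\O}$.

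The only genuine modification concerns $T_{1}=b_{h}(\bP_{h}\bff,\bv_{h})-b(\bff,\bv_{h})$: for $\bff\in[\LO]^{2}$ a VEM-interpolant $\bff_{I}$ is not available, but inspection of Lemma~\ref{lemcotste} shows that $\bff_{I}$ enters only through the best-approximation inequality $\|\bP_{h}\bff-\bff\|_{0,\O}\le\|\bff-\bff_{I}\|_{0,\O}$. In our setting I would simply use the $[\LO]^{2}$-stability of $\bP_{h}$ encoded in \eqref{prop}, namely $\|\bP_{h}\bff-\bff\|_{0,\O}\le C\|\bff\|_{0,\O}$, combined with a polynomial approximant $\bff_{\pi}$ (for instance $\bff_{\pi}=\boldsymbol{0}$). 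The same consistency/stability manipulations for $b_{h}^{E}$ used in Lemma~\ref{lemcotste} then deliver $|T_{1}|\le Ch\|\bff\|_{0,\O}\|\bv_{h}\|_{1,\O}\le Ch^{r_{1}}\|\bff\|_{0,\O}\|\bv_{h}\|_{1,\O}$, where the last step uses $h\le h^{r_{1}}$ since $h\le 1$ and $r_{1}\in(0,1]$. Assembling the two contributions and combining with the triangle inequality $\|\widetilde{\bu}-\bu_{h}\|_{1,\O}\le\|\widetilde{\bu}-\bu_{I}\|_{1,\O}+\|\bv_{h}\|_{1,\O}$ yields the desired $H^{1}$-bound, and the statement follows. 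The main obstacle is bookkeeping: verifying that in every step where Lemma~\ref{lemcotste} exploited $\bff_{I}$, the substitution by the $[\LO]^{2}$-stability of $\bP_{h}$ preserves the $h^{r_{1}}$-rate. No new analytical ingredient beyond the regularity Lemma~\ref{LEM:REG}(i) and the arguments already present in Lemma~\ref{lemcotste} is needed.
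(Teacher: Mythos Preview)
Your proposal is correct and follows essentially the same route as the paper. The paper also replays the proof of Lemma~\ref{lemcotste}, keeps the bound on $T_{2}$ unchanged, and modifies only the treatment of $T_{1}$ by bounding $\|\bP_{h}\bff\|_{0,\O}\le\|\bff\|_{0,\O}$ directly (which is exactly your choice $\bff_{\pi}=\boldsymbol{0}$ combined with the $[\LO]^{2}$-stability of $\bP_{h}$), arriving at $\|\widetilde{\bu}-\bu_{h}\|_{1,\O}\le C(h\|\bff\|_{0,\O}+\|\widetilde{\bu}-\bu_{I}\|_{1,\O}+|\widetilde{\bu}-\bu_{\pi}|_{1,h,\O})$ and then invoking Lemmas~\ref{estima2}--\ref{estima4} and Lemma~\ref{LEM:REG}(i).
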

\begin{proof}
Given $\bff\in[\LO]^{2}$, let $\bu\in\bV$ and $\bu_h\in\bV_{h}$ be the solutions of
problems \eqref{T14} and \eqref{T2}, respectively. Hence,
$\bu=\widetilde{\mathbf{T}}\bff$ and $\bu_h=\widehat{\bT}_{h}\bff$.
The arguments used in the proof of Lemma \ref{lemcotste} can be repeated,  however to bound the term $T_{1}$, we use
\begin{align*}
\label{513}
T_{1}&=b_h(\bP_h\bff,\bv_h)-b(\bP_h\bff,\bv_{h})= \sum_{E\in \T_{h} }\left[b_h^{E}(\bP_h\bff,\bv_h-\bv_{h}^{\pi})-b^{E}(\bP_h\bff,\bv_{h}-\bv_{h}^{\pi})\right]\\
&\leq C\left(\sum_{E\in \T_{h} }\|\bP_h\bff\|_{0,E}^{2}\right)^{1/2}\left(\sum_{E\in \T_{h} }\|\bv_h-\bv_{h}^{\pi}\|_{0,E}^{2}\right)^{1/2}\\
&\leq C\|\bP_h\bff\|_{0,\O}\left(\sum_{E\in \T_{h} }h_{E}^2|\bv_h|_{1,E}^2\right)^{1/2}\leq
Ch\|\bff\|_{0,\O}||\bv_{h}||_{1,\O}.
\end{align*}
Therefore, in this case, we obtain
$$\|\bu-\bu_{h}\|_{1,\O}\leq C\left(h\|\bff\|_{0,\O}+\|\bu-\bu_{I}\|_{1,\O}+|\bu-\bu_{\pi}|_{1,h,\O}\right)$$ 
where $\bu_{I}$ and $\bu_{\pi}$ are defined as in that lemma. 
Thus, the result follows from
Lemmas~\ref{estima2}--\ref{estima4} and Lemma~\ref{LEM:REG}.
\end{proof}

As a consequence of this lemma, a spectral convergence result analogous
to Theorem~\ref{gap} holds for $\widehat{\bT}_{h}$ and $\widetilde{\mathbf{T}}$.
Moreover, we are in a position to establish the following estimate.

\begin{lemma}
\label{erroga}
Let $\bw_h$ be an eigenfunction of $\widehat{\bT}_{h}$ associated with the eigenvalue
$\mu_h^{(i)}$, $1\le i\le m$, with $\left\|\bw_h\right\|_{0,\O}=1$. Then,
there exists an eigenfunction $\bw\in[\LO]^{2}$
of $\mathbf{T}$ associated with $\mu$ and $C>0$ such
that
\begin{equation}
\label{equat} 
\left\|\bw-\bw_h\right\|_{0,\O}\leq C h^{r_1}\left\{\|\bu-\bu_{h}\|_{1,\O}+\left[\sum_{E\in\T_{h}}\left(\left\| \bu-\bpiO\bu_{h}\right\|_{0,E}^{2}+\left| \bu-\bpisi\bu_{h}\right|_{1,E}^{2}\right)\right]^{1/2}\right\}.
\end{equation}
\end{lemma}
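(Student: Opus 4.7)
The plan is to combine Lemma~\ref{convnormnew} with Lemma~\ref{lasttheo} via the abstract spectral approximation theory of Babu\v ska--Osborn (see \cite{BO}), now applied in $[\LO]^2$ rather than in $\bV$ as was done in Section~\ref{SEC:approximation}.

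First, Lemma~\ref{convnormnew} provides norm convergence of $\widehat{\bT}_h$ to $\widetilde{\mathbf{T}}$ on $[\LO]^2$ at rate $h^{r_1}$. Since both operators are compact ($\widehat{\bT}_h$ is of finite rank) and self-adjoint in the appropriate sense (for $\widehat{\bT}_h$, with respect to the discrete inner product, combined with the $[\LO]^2$-stability of $\bP_h$), Theorems~7.1 and 7.3 of \cite{BO} apply: for the given eigenfunction $\bw_h$ of $\widehat{\bT}_h$ associated with $\mu_h^{(i)}$, there exists an eigenfunction $\bw\in[\LO]^2$ of $\mathbf{T}$ associated with $\mu$ such that
\begin{equation*}
\|\bw-\bw_h\|_{0,\O}\le C\,\tilde\gamma_h,\qquad\tilde\gamma_h:=\sup_{\bff\in\CE:\,\|\bff\|_{0,\O}=1}\|(\widetilde{\mathbf{T}}-\widehat{\bT}_h)\bff\|_{0,\O}.
\end{equation*}

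Second, because $\CE$ is finite-dimensional, the supremum defining $\tilde\gamma_h$ is attained at some $\bff^{\ast}\in\CE$ with $\|\bff^{\ast}\|_{0,\O}=1$. Setting $\bu:=\mathbf{T}\bff^{\ast}=\widetilde{\mathbf{T}}\bff^{\ast}$ and $\bu_h:=\widehat{\bT}_h\bff^{\ast}$, the hypotheses of Lemma~\ref{lasttheo} are fulfilled, and the lemma yields
\begin{equation*}
\tilde\gamma_h=\|\bu-\bu_h\|_{0,\O}\le Ch^{r_1}\Biggl\{\|\bu-\bu_h\|_{1,\O}+\Biggl[\sum_{E\in\T_h}\Bigl(\|\bu-\bpiO\bu_h\|_{0,E}^{2}+|\bu-\bpisi\bu_h|_{1,E}^{2}\Bigr)\Biggr]^{1/2}\Biggr\}.
\end{equation*}
Chaining the two displayed inequalities gives \eqref{equat}.

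The main (only nontrivial) obstacle is the legitimate invocation of the Babu\v ska--Osborn gap estimate in the $[\LO]^2$ setting: $\widehat{\bT}_h=\bT_h\bP_h$ has been constructed as an operator into $\bV_h\hookrightarrow\bV$ and its discrete self-adjointness was stated with respect to $a_h(\cdot,\cdot)$ and $b_h(\cdot,\cdot)$, so one has to re-read it as a compact operator on $[\LO]^2$ whose non-trivial eigenpairs coincide with those of $\bT_h$ (the $[\LO]^2$-orthogonal complement of $\bV_h$ contributing only zero eigenvalues) and whose approximation of $\widetilde{\mathbf{T}}$ satisfies the hypotheses of Theorems~7.1 and 7.3 of \cite{BO}; once this is in place, the remaining steps reduce to direct applications of Lemmas~\ref{convnormnew} and \ref{lasttheo}.
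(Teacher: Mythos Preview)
Your proposal is correct and follows essentially the same approach as the paper: invoke Lemma~\ref{convnormnew} to obtain norm convergence of $\widehat{\bT}_h$ to $\widetilde{\mathbf{T}}$ in $[\LO]^2$, apply Theorem~7.1 of \cite{BO} to bound $\|\bw-\bw_h\|_{0,\O}$ by the gap $\sup_{\bff\in\CE,\,\|\bff\|_{0,\O}=1}\|(\widetilde{\mathbf{T}}-\widehat{\bT}_h)\bff\|_{0,\O}$, and then control this by Lemma~\ref{lasttheo}. The only cosmetic difference is that the paper bounds the supremum termwise (for every $\tilde{\bff}\in\tilde{\CE}$ identified with $\bff\in\CE$) rather than selecting a maximizer $\bff^{\ast}$, but both routes are equivalent here since $\CE$ is finite-dimensional.
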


\begin{proof}
Thanks to Lemma~\ref{convnormnew}, Theorem~7.1 from \cite{BO} yields
spectral convergence of $\widehat{\mathbf{T}}_{h}$ to $\widetilde{\mathbf{T}}$.   In particular,  because of the
relation between the eigenfunctions of $\mathbf{T}$ and $\bT_h$ with those of $\widetilde{\mathbf{T}}$
and $\widehat{\mathbf{T}}_{h}$, respectively, we have that $\bw_h\in\CE_h$ and
there exists  $\bw\in\CE$ such that
\begin{equation*}
\left\|\bw-\bw_h\right\|_{0,\O}
\le C\sup_{\widetilde{{\bff}}\in\tE:\ \left\|\widetilde{{\bff}}\right\|_{0,\O}=1}
\big\|(\widetilde{\mathbf{T}}-\widehat{\mathbf{T}}_{h})\widetilde{{\bff}}\big\|_{0,\O}.
\end{equation*}
On the other hand, because of Lemma~\ref{lasttheo}, for all $\widetilde{{\bff}}\in\tE$,
if $\bff\in\CE$ is such that $\widetilde{{\bff}}=\bff$, then
$$
\big\|(\widetilde{\mathbf{T}}-\widehat{\mathbf{T}}_{h})\widetilde{{\bff}}\big\|_{0,\O}
=\left\|(\bT-\widehat{\mathbf{T}}_{h})\bff\right\|_{0,\O}
 \leq C h^{r_1}\left\{\|\bu-\bu_{h}\|_{1,\O}
 +\left[\sum_{E\in\T_{h}}\left(\left\| \bu-\bpiO\bu_{h}\right\|_{0,E}^{2}
 +\left| \bu-\bpisi\bu_{h}\right|_{1,E}^{2}\right)\right]^{1/2}\right\},
$$
which conclude the proof.
\end{proof}

We are now in a position to prove Theorem~\ref{asintotico}.

\begin{proof}
{\it of Theorem~\ref{asintotico}.}
Now, we are able to derive estimate~\eqref{meta}. With this aim,
we will bound each term on the right hand side of estimare \eqref{equat}
in Lemma~\ref{erroga}.

On the one hand, let $\bu\in\bV$ be the unique solution of the following problem. 
$$a(\bu,\bv)=b(\bw,\bv)\qquad \forall\bv\in\bV.$$ 
Since $a(\bw,\bv)=\l b(\bw,\bv)$ we have that $\bu=\bw/\l.$

On the other hand, let $\bu_{h}\in \bV_{h}$ be the unique solution of
the discrete problem:
\begin{equation}
\label{ro2}
a_{h}(\bu_{h},\bv_{h})=b_{h}(\bP_h\bw,\bv_{h})\qquad \forall \bv_{h}\in \bV_{h}.
\end{equation}
Now, since as stated above $\bu=\bw/\l$, we have that
\begin{equation}
\label{ro4}
\|\bu-\bu_{h}\|_{1,\O}\leq \dfrac{\|\bw-\bw_{h}\|_{1,\O}}{|\l|}+\left|\dfrac{1}{\l}-\dfrac{1}{\l_{h}}\right|\|\bw_{h}\|_{1,\O}+\left\|\dfrac{\bw_{h}}{\l_{h}}-\bu_{h}\right\|_{1,\O}.
\end{equation}
For the second term on the right hand side above, we use \eqref{cotavalor} to write 
\begin{align}
\label{ro5}
\left|\dfrac{1}{\l}-\dfrac{1}{\l_{h}}\right|=\dfrac{|\l-\l_{h}|}{|\l||\l_{h}|}&\leq C\left[\|\bw-\bw_{h}\|_{1,\O}^{2}+ \sum_{E\in\T_{h}}\left(\left\| \bw-\bpiO\bw_{h}\right\|_{0,E}^{2}+\left| \bw-\bpisi\bw_{h}\right|_{1,E}^{2}\right)\right].
\end{align}
In order to estimate the third term we recall first that
$$a_{h}(\bw_{h},\bv_{h})=\l_{h}b_{h}(\bw_{h},\bv_{h})\qquad \forall \bv_{h}\in \bV_{h}.$$
Then, subtracting this equation divided by $\l_{h}$ from \eqref{ro2} we have that 
$$a_{h}\!\left(\bu_{h}-\dfrac{\bw_{h}}{\l_{h}},\bv_{h}\right)=b_{h}(\bP_h\bw-\bw_{h},\bv_{h})\qquad \forall \bv_{h}\in \bV_{h}.$$
Hence, from the uniform ellipticity of $a_{h}(\cdot,\cdot)$ in $\bV_{h}$, we obtain
\begin{align*}
\left\|\bu_{h}-\dfrac{\bw_{h}}{\l_{h}}\right\|_{1,\O}^{2}&\leq C \|\bP_h\bw-\bw_{h}\|_{0,\O}\left\|\bu_{h}-\dfrac{\bw_{h}}{\l_{h}}\right\|_{0,\O}\leq C \|\bP_h(\bw-\bw_{h})\|_{0,\O}\left\|\bu_{h}-\dfrac{\bw_{h}}{\l_{h}}\right\|_{1,\O}.
\end{align*}
Therefore
\begin{align}
\label{ro6}
\left\|\bu_{h}-\dfrac{\bw_{h}}{\l_{h}}\right\|_{1,\O}&\leq C \|\bP_h\|\|\bw-\bw_{h}\|_{0,\O}\leq C \|\bw-\bw_{h}\|_{1,\O}.
\end{align}
Then, substituting \eqref{ro5} and  \eqref{ro6} into \eqref{ro4} we obtain 
\begin{align}
\label{ro7}
\|\bu-\bu_{h}\|_{1,\O}\leq C\left\{\|\bw-\bw_{h}\|_{1,\O}+\left[\sum_{E\in\T_{h}}\left(\left\| \bw-\bpiO\bw_{h}\right\|_{0,E}^{2}+\left| \bw-\bpisi\bw_{h}\right|_{1,E}^{2}\right)\right]^{1/2}\right\}.
\end{align}

For the second term on the right hand side of \eqref{equat} we have
\begin{align}
\label{roexl2}
 \left(\sum_{E\in\T_{h}}\left\| \bu-\bpiO\bu_{h}\right\|_{0,E}^{2}\right)^{1/2}\leq \|\bu-\bu_{h}\|_{1,\O}+ \left(\sum_{E\in\T_{h}}\left\| \bu_{h}-\bpiO\bu_{h}\right\|_{0,E}^{2}\right)^{1/2},
\end{align}
whereas  
\begin{align*}
\|\bu_{h}-\bpiO\bu_{h}\|_{0,E}\leq &C\left\|\bu_{h}-\dfrac{\bw_{h}}{\l_{h}}\right\|_{0,E}+\dfrac{\|\bw_{h}-\bpiO\bw_{h}\|_{0,E}}{\l_{h}}+\left\|\bpiO\left(\dfrac{\bw_{h}}{\l_{h}}-\bu_{h}\right)\right\|_{0,E}
\\&\leq  2\left\|\bu_{h}-\dfrac{\bw_{h}}{\l_{h}}\right\|_{0,E}+\dfrac{\|\bw-\bw_{h}\|_{0,E}}{\l_{h}}+\dfrac{\|\bw-\bpiO\bw_{h}\|_{0,E}}{\l_{h}}.\\
\end{align*}
Then, summing over all poligons and using \eqref{ro6}, we obtain
\begin{align*}
\left(\sum_{E\in\T_{h}}\left\| \bu_{h}-\bpiO\bu_{h}\right\|_{0,E}^{2}\right)^{1/2}&\leq  C\left[2\left\|\bu_{h}-\dfrac{\bw_{h}}{\l_{h}}\right\|_{1,\O}+\|\bw-\bw_{h}\|_{1,\O}+\left(\sum_{E\in\T_{h}}\left\| \bw_{h}-\bpiO\bw_{h}\right\|_{0,E}^{2}\right)^{1/2}\right]\\
&\leq C \left[\|\bw-\bw_{h}\|_{1,\O}+\left(\sum_{E\in\T_{h}}\left\| \bw_{h}-\bpiO\bw_{h}\right\|_{0,E}^{2}\right)^{1/2}\right].
\end{align*}

Substituting this and estimate \eqref{ro7} into \eqref{roexl2}  we obtain
\begin{equation}\label{newro}
\left(\sum_{E\in\T_{h}}\left\| \bu-\bpiO\bu_{h}\right\|_{0,E}^{2}\right)^{1/2}\leq C \left\{\|\bw-\bw_{h}\|_{1,\O}+\left[\sum_{E\in\T_{h}}\left(\left\| \bw_{h}-\bpiO\bw_{h}\right\|_{0,E}^{2}+\left| \bw_{h}-\bpisi\bw_{h}\right|_{1,E}^{2}\right)\right]^{1/2}\right\}.
\end{equation}

For the other term on the right hand side of \eqref{equat} we have
\begin{align}
\label{roex2}
 \left(\sum_{E\in\T_{h}}\left| \bu-\bpisi\bu_{h}\right|_{1,E}^{2}\right)^{1/2}\leq \|\bu-\bu_{h}\|_{1,\O}+ \left(\sum_{E\in\T_{h}}\left| \bu_{h}-\bpisi\bu_{h}\right|_{1,E}^{2}\right)^{1/2},
\end{align}
whereas  
\begin{align*}
|\bu_{h}-\bpisi\bu_{h}|_{1,E}&\leq  2\left\|\bu_{h}-\dfrac{\bw_{h}}{\l_{h}}\right\|_{1,E}+\dfrac{|\bw-\bw_{h}|_{1,E}}{\l_{h}}+\dfrac{|\bw-\bpisi\bw_{h}|_{1,E}}{\l_{h}}.
\end{align*}
Then, summing over all polygons and using \eqref{ro6}, we obtain
\begin{align*}
\left(\sum_{E\in\T_{h}}\left| \bu_{h}-\bpisi\bu_{h}\right|_{1,E}^{2}\right)^{1/2}&\leq  2\left\|\bu_{h}-\dfrac{\bw_{h}}{\l_{h}}\right\|_{1,\O}+\dfrac{\|\bw-\bw_{h}\|_{1,\O}}{\l_{h}}+\dfrac{\disp\left(\sum_{E\in\T_{h}}\left| \bw_{h}-\bpisi\bw_{h}\right|_{1,E}^{2}\right)^{1/2}}{\l_{h}}\\
&\leq C \left[\|\bw-\bw_{h}\|_{1,\O}+\left(\sum_{E\in\T_{h}}\left| \bw_{h}-\bpisi\bw_{h}\right|_{1,E}^{2}\right)^{1/2}\right].
\end{align*}

Substituting this and estimate \eqref{ro7} into \eqref{roex2}  we obtain
$$\left(\sum_{E\in\T_{h}}\left| \bu-\bpisi\bu_{h}\right|_{1,E}^{2}\right)^{1/2}\leq C \left\{\|\bw-\bw_{h}\|_{1,\O}+\left[\sum_{E\in\T_{h}}\left(\left\| \bw_{h}-\bpiO\bw_{h}\right\|_{0,E}^{2}+\left| \bw_{h}-\bpisi\bw_{h}\right|_{1,E}^{2}\right)\right]^{1/2}\right\}.$$

Finally, substituting the above estimate, \eqref{newro}
and \eqref{ro7} into \eqref{equat}, we conclude \eqref{meta} of Lemma \ref{asintotico}.
\end{proof}

\setcounter{equation}{0}
\section{A posteriori error estimator}
\label{SEC:aposteriori1}
The aim of this section is to introduce a suitable
residual-based error estimator for the elasticity equations
which is completely computable,
in the sense that it depends only on quantities available
from VEM solution. Then, we will  show its equivalence with the error.
For this purpose, we introduce the following definitions and notations.

For any polygon $E\in \CT_h$, we denote by $\CS_{E}$ the set of edges of $E$ and,
$$\CS=\bigcup_{E\in\CT_h}\CS_{E}.$$
 We decompose $\CS=\CS_{\O}\cup\CS_{\G_{D}}\cup\CS_{\G_N}$
where $\CS_{\G_{D}}=\{\ell\in \CS:\ell\subset \G_{D}\}$, $\CS_{\G_N}=\{\ell\in \CS:\ell\subset \G_N\}$
and $\CS_{\O}=\CS\backslash(\CS_{\G_{D}}\cup\CS_{\G_N})$.
For each edge $\ell\in \CS_{\O}$ and for any sufficiently
smooth function $\bv$, we define the following jump on $\ell$ by
$$\left[\!\left[ \Cten\beps(\bv) \bn\right]\!\right]_\ell:=\Cten\beps( \bv|_{E^{+}}) \boldsymbol{n}_{E^{+}}+\Cten\beps( \bv|_{E^{-}})\boldsymbol{n}_{E^{-}} .$$
 where $E^{+}$ and  $E^{-}$ are two element $\CT_{h}$ sharing the edge $\ell$ and $\boldsymbol{n}_{E^{+}}$ and $\boldsymbol{n}_{E^{-}}$ are the respective outer unit normal vectors.
 
As consequence of the mesh regularity assumptions,
we have that, each polygon $E\in\CT_h$, admits a sub-triangulation $\CT_h^{E}$
obtained by joining each vertex of $E$ with the midpoint of the ball with respect
to which $E$ is starred. Let $\hCT_h:=\bigcup_{E\in\CT_h}\CT_h^{E}$.
Since we are also assuming \textbf{A1} and \textbf{A2}, $\big\{\hCT_h\big\}_h$
is a shape-regular family of triangulations of $\O$. 

Now, we introduce bubble functions on polygons as follows.
A bubble function $\psi_{E}\in \H_0^1(E)$ for a polygon $E$
can be constructed piecewise as the sum of the cubic
bubble functions (cf. \cite{CGPS,Verfurth}) on each triangle of the
mesh element $\CT_h^{E}$. Now, an edge bubble function $\psi_{\ell}$ for
$\ell\in\partial E$ is a piecewise quadratic function,
attaining the value 1 at the barycenter of $\ell$ and vanishing
on the triangles $T\in\CT_h^{E}$  that do not contain $\ell$
on its boundary (see also \cite{CGPS}).

The following results which establish standard estimates
for bubble functions will be useful in what follows (see \cite{ATO,Verfurth}).

\begin{lemma}[Interior bubble functions]
\label{burbujainterior}
For any $E\in \CT_h$, let $\psi_{E}$ be the corresponding bubble function.
Then, there exists a constant $C>0$ 
independent of  $h_E$ such that
\begin{align*}
C^{-1}\|q\|_{0,E}^2&\leq \int_{E}\psi_{E} q^2\leq C\|q\|_{0,E}^2\qquad \forall q\in \bbP_k(E),\\
C^{-1}\| q\|_{0,E}&\leq \|\psi_{E} q\|_{0,E}+h_E\|\nabla(\psi_{E} q)\|_{0,E}\leq C\|q\|_{0,E}\qquad
\forall q\in \bbP_k(E).\\
\end{align*}
\end{lemma}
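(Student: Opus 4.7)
The plan is to reduce the claim on the polygon $E$ to standard bubble-function estimates on the shape-regular sub-triangulation $\CT_h^E$, which was introduced just before the lemma statement. Since $\psi_E$ is constructed piecewise as a sum of cubic triangle bubbles on the triangles $T\in\CT_h^E$, and since $\{\hCT_h\}_h$ is a shape-regular family (thanks to assumptions $\mathbf{A_1}$ and $\mathbf{A_2}$), classical finite element bubble-function results apply on each $T$ with constants depending only on the shape-regularity constant and on $k$.

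First I would establish the two upper bounds, which are the easy direction. Since $0\le\psi_E\le 1$ on $E$, one has immediately
\begin{equation*}
\int_E \psi_E q^2 \,\le\, \|q\|_{0,E}^2
\qquad\text{and}\qquad
\|\psi_E q\|_{0,E} \,\le\, \|q\|_{0,E}.
\end{equation*}
For the gradient term, I would split $\|\nabla(\psi_E q)\|_{0,E}^2=\sum_{T\in\CT_h^E}\|\nabla(\psi_E q)\|_{0,T}^2$, note that $\psi_E q|_T$ is a polynomial of degree at most $k+3$, and apply a standard inverse inequality on the shape-regular triangle $T$ (with $h_T\simeq h_E$) together with $\|\psi_E q\|_{0,T}\le\|q\|_{0,T}$ to conclude $h_E\|\nabla(\psi_E q)\|_{0,T}\le C\|q\|_{0,T}$; then sum over the triangles of $\CT_h^E$.

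Next I would tackle the lower bounds, which are the genuinely nontrivial direction. For each triangle $T\in\CT_h^E$, the restriction of $\psi_E$ to $T$ is precisely the standard cubic bubble $\psi_T$, so Verf\"urth's classical estimate yields
\begin{equation*}
\int_T \psi_T q^2 \,\ge\, C\,\|q\|_{0,T}^2
\qquad\forall q\in\bbP_k(T),
\end{equation*}
with $C$ depending only on $k$ and on the shape-regularity of $T$. Summing over $T\in\CT_h^E$ gives $\int_E \psi_E q^2 \ge C\|q\|_{0,E}^2$ for every $q\in\bbP_k(E)$ (here one uses that the restriction of a global polynomial $q\in\bbP_k(E)$ to each $T$ still lies in $\bbP_k(T)$). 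Combined with the Cauchy--Schwarz-type inequality $\int_E\psi_E q^2\le \|\psi_E q\|_{0,E}\|q\|_{0,E}$, this delivers $\|q\|_{0,E}\le C\|\psi_E q\|_{0,E}$, which gives the lower bound in the second inequality (the gradient term only helps).

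The main obstacle I anticipate is the potentially delicate interplay between the global polynomial $q\in\bbP_k(E)$ and the piecewise structure of $\psi_E$: one must verify that the constants produced by the triangle-wise application of the classical bubble estimates do not degenerate when one sums over $\CT_h^E$. This is controlled by the uniform shape-regularity of $\hCT_h$ (which, in turn, rests on assumptions $\mathbf{A_1}$ and $\mathbf{A_2}$ and on the uniform bound on the number of triangles in $\CT_h^E$, which equals the number of vertices of $E$, again controlled by $\mathbf{A_1}$). Once this is in place, all scaling arguments and inverse inequalities are standard, and the lemma follows.
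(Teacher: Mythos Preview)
Your argument is correct. The paper does not actually supply a proof of this lemma: it introduces the bubble functions and then simply cites the standard references \cite{ATO,Verfurth} for the estimates. What you have written is precisely the adaptation of the classical triangle--based bubble estimates of Verf\"urth to the polygonal element $E$ via the shape-regular sub-triangulation $\CT_h^E$, and all the ingredients you invoke (piecewise identity $\psi_E|_T=\psi_T$, $0\le\psi_E\le1$, the inverse inequality on each $T$ with $h_T\simeq h_E$, the triangle-wise lower bound and summation, and the Cauchy--Schwarz step to pass from the weighted norm to $\|\psi_E q\|_{0,E}$) are exactly the standard ones from those references. Your remark about the control of constants through the uniform bound on the number of sub-triangles (equivalently, on $N_E$) via $\mathbf{A_1}$ is the only point that is genuinely specific to the polygonal VEM setting, and you have identified it correctly.
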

\begin{lemma}[Edge bubble functions]
\label{burbuja}
For any $E\in \CT_h$ and $\ell\in\partial E$, let $\psi_{\ell}$
be the corresponding edge bubble function. Then, there exists
a constant $C>0$ independent of $h_E$ such that
 \begin{align*}
C^{-1}\|q\|_{0,\ell}^2\leq \int_{\ell}\psi_{\ell} q^2& \leq C\|q\|_{0,\ell}^2\qquad
\forall q\in \bbP_k(\ell).\\
\end{align*}
Moreover, for all $q\in\bbP_k(\ell)$, there exists an extension of
$q\in\bbP_k(E)$ (again denoted by $q$) such that
 \begin{align*}
h_E^{-1/2}\|\psi_{\ell} q\|_{0,E}+h_E^{1/2}\|\nabla(\psi_{\ell} q)\|_{0,E}&\leq C\|q\|_{0,\ell}.\\
\end{align*}
\end{lemma}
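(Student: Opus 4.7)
The plan is to reduce both assertions to the classical triangular edge-bubble estimates by exploiting the sub-triangulation $\hCT_h^E$ introduced earlier: under assumptions $\mathbf{A_1}$ and $\mathbf{A_2}$, the family $\{\hCT_h\}_h$ is shape-regular, so standard reference-element and scaling techniques apply with constants that depend only on $C_\CT$ and $k$.

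For the two-sided inequality on $\ell$, observe that $\psi_\ell$ restricted to $\ell$ is just the one-dimensional quadratic bubble (equal to $1$ at the barycenter and vanishing at the endpoints). The form $q\mapsto \bigl(\int_\ell \psi_\ell q^{2}\bigr)^{1/2}$ therefore defines a norm on the finite-dimensional space $\bbP_k(\ell)$. Pulling back to a reference edge via an affine map and invoking equivalence of norms on finite-dimensional spaces gives the comparison with $\|q\|_{0,\ell}^{2}$ with a constant depending only on $k$; the affine map has Jacobian proportional to $|\ell|$, and both sides scale identically under this Jacobian, so the constant transfers without dependence on $h_E$.

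For the extension estimate, first I would construct a polynomial extension. Let $T^{+},T^{-}\in\hCT_h^E$ be the (at most) two sub-triangles sharing the edge $\ell$. On each such triangle, extend $q$ as a polynomial of degree $k$ that is constant in the direction normal to $\ell$; this yields $\tilde q\in\bbP_k(T^{\pm})$ with $\tilde q|_\ell=q$. Outside $T^{+}\cup T^{-}$, set $\psi_\ell q=0$, which is consistent because $\psi_\ell$ vanishes on every triangle of $\hCT_h^E$ not containing $\ell$. Mapping each $T^{\pm}$ to a reference triangle and using the classical edge bubble inequalities on triangles (see, e.g., Verfürth), one obtains
\[
h_{T^{\pm}}^{-1/2}\|\psi_\ell q\|_{0,T^{\pm}} + h_{T^{\pm}}^{1/2}\|\nabla(\psi_\ell q)\|_{0,T^{\pm}} \leq C\|q\|_{0,\ell}.
\]
Since $\hCT_h$ is shape-regular, $h_{T^{\pm}}$ and $|\ell|$ are both comparable to $h_E$, so summing the contributions from $T^{+}$ and $T^{-}$ (and using that $\psi_\ell q$ is supported there) yields the desired bound with a constant independent of $h_E$.

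The main obstacle is establishing that all constants are uniform with respect to $E$ and $\ell$; this hinges on the shape-regularity of the auxiliary triangulation $\hCT_h$, which is precisely what assumptions $\mathbf{A_1}$ and $\mathbf{A_2}$ ensure. Given this, the argument proceeds by straightforward reference-element scaling and norm equivalence on finite-dimensional polynomial spaces.
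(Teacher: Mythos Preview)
The paper does not actually prove this lemma: it is stated as a standard result with a citation to Ainsworth--Oden and Verf\"urth, and no argument is given beyond Remark~\ref{extencion}, which specifies one admissible extension of $q$ (prolongation of the one-variable polynomial to the line containing $\ell$, then constant continuation in the normal direction, then restriction to $E$). Your proposal supplies precisely the kind of argument those references contain---reduction to the shape-regular sub-triangulation $\hCT_h^E$ and a reference-element scaling---so it is entirely consistent with what the paper intends.

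Two minor remarks. First, since $\ell\subset\partial E$, exactly one sub-triangle of $\hCT_h^E$ has $\ell$ as an edge (the triangle with apex at the star center and base $\ell$); your ``at most two'' is harmless but in this setting there is only $T^{+}$. Second, the lemma asserts $q\in\bbP_k(E)$, i.e.\ a single polynomial on the whole polygon, not a piecewise object; your constant-in-the-normal extension is in fact a global polynomial (the same formula on every sub-triangle), so this is fine, and it coincides with the extension described in Remark~\ref{extencion}. With these clarifications your argument is correct and matches the standard proof the paper defers to.
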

\begin{remark}
\label{extencion}
A possible way of extending $q$ from $\ell\in\partial E$ to $E$
so that Lemma~\ref{burbuja} holds is  as follows:
first to extend $q$ to the straight line $L\supset\ell$ as the same polynomial function,
then to extend it to the whole plain through a constant
prolongation in the normal direction to $L$ and finally restricting it to $E$. 
\end{remark}

In what follows, let $(\l,\bw)$ be a solution to Problem~\ref{P1}.
We assume $\l$ is a simple eigenvalue and we normalize $\bw$
so that $\|\bw\|_{0,\O}=1$. Then, for each mesh $\CT_{h}$,
there exists a solution $(\l_{h},\bw_{h})$ of Problem~\ref{P3}
such that $\l_{h}\rightarrow\l$, $\|\bw_{h}\|_{0,\O}=1$
and $\|\bw-\bw_{h}\|_{1,\O}\rightarrow 0$ as $h\rightarrow 0$.

The following lemmas provide some error equations which will be the starting
points of our error analysis. First, we will denote with
$\be:=(\bw-\bw_{h})\in \bV$  the eigenfunction error and
we define the edge residuals as follows:
\begin{equation}
\label{saltocal}
J_{\ell}:=\left\{\begin{array}{l}
\dfrac{1}{2}\left[\!\left[ \Cten\beps(\bpisi  \bw_h) \bn\right]\!\right]_{\ell},\qquad\;\;\ell\in \CS_{\O},
\\[0.3cm]
-\Cten\beps(\bpisi  \bw_h) \bn, \qquad\qquad \;\ell\in\CS_{\G_N},
\\[0.3cm]
\0 , \qquad\qquad\qquad\qquad \;\ell\in\CS_{\G_D}.
\end{array}\right. 
\end{equation}
Notice that $J_{\ell}$ are actually computable since
they only involve values of $\bpisi  \bw_h\in [\bbP_{k}(E)]^{2}$ which is computable.
\begin{lemma}
\label{ext2}
For any $\bv\in \bV$, we have the following identity:
\begin{align*}
a(\be,\bv)&=\l b(\bw,\bv)-\l_h b(\bw_h,\bv)+\sum_{E\in \CT_h}\l_{h}b^E(\bw_h-\bpiO  \bw_h,\bv)-\sum_{E\in \CT_h}a^E(\bw_h-\bpisi  \bw_h,\bv)\\
&\quad+\sum_{E\in \CT_h}\left[\int_{E}\left(\l_{h}\varrho\bpiO \bw_{h}+\Div(\Cten\beps(\bpisi  \bw_h))\right)\cdot\bv+\sum_{\ell\in \CS_{E}}\int_{l}J_{\ell} \bv\right],
\end{align*}
where $\bpisi$ is the projector defined by \eqref{proje_0}.
\end{lemma}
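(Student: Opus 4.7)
The starting point is the eigenvalue equation in Problem~\ref{P1}, which gives $a(\bw,\bv)=\l b(\bw,\bv)$ for all $\bv\in\bV$, so that
$$a(\be,\bv) \;=\; \l b(\bw,\bv) - a(\bw_h,\bv) \;=\; \l b(\bw,\bv) - \sum_{E\in\CT_h} a^E(\bw_h,\bv).$$
The whole task is then to rewrite $\sum_E a^E(\bw_h,\bv)$ as a strong, computable residual plus the two ``VEM consistency'' corrections involving $\bpisi$ and $\bpiO$.

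The first move is to insert the projector by splitting
$$a^E(\bw_h,\bv) = a^E(\bw_h-\bpisi\bw_h,\bv) + a^E(\bpisi\bw_h,\bv).$$
The first piece appears in the target formula essentially verbatim (with a minus sign). For the second piece, because $\bpisi\bw_h|_E\in[\bbP_k(E)]^2$ is a genuine polynomial, Green's formula for symmetric tensors applies elementwise:
$$a^E(\bpisi\bw_h,\bv) = -\int_E \Div(\Cten\beps(\bpisi\bw_h))\cdot\bv + \int_{\partial E}\bigl(\Cten\beps(\bpisi\bw_h)\bn_E\bigr)\cdot\bv,$$
where $\bn_E$ is the outward unit normal to $E$.

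Next, I would assemble the boundary contributions across elements. On each interior edge $\ell\in\CS_\O$ the two contributions from the adjoining elements fuse into the jump $\int_\ell[\![\Cten\beps(\bpisi\bw_h)\bn]\!]_\ell\cdot\bv$; on each Neumann edge $\ell\in\CS_{\G_N}$ only the single trace $\int_\ell (\Cten\beps(\bpisi\bw_h)\bn)\cdot\bv$ survives; on Dirichlet edges the term vanishes since $\bv|_{\G_D}=\mathbf{0}$. Recognizing that in the double sum $\sum_E\sum_{\ell\in\CS_E}$ each interior edge is counted twice while each boundary edge is counted once, the assembled boundary term can be repackaged as $\sum_E\sum_{\ell\in\CS_E}\int_\ell J_\ell\cdot\bv$ with $J_\ell$ given in~\eqref{saltocal}. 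At this stage the identity reads
$$a(\be,\bv) = \l b(\bw,\bv) - \sum_E a^E(\bw_h-\bpisi\bw_h,\bv) + \sum_E \int_E\Div(\Cten\beps(\bpisi\bw_h))\cdot\bv + \sum_E\sum_{\ell\in\CS_E}\int_\ell J_\ell\cdot\bv.$$

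The final step is to dress the volume term with the mass contributions required by the statement. Writing $b=\sum_E b^E$ gives the elementary identity
$$-\l_h b(\bw_h,\bv) + \sum_E \l_h b^E(\bw_h-\bpiO\bw_h,\bv) = -\sum_E\int_E \l_h\varrho\,\bpiO\bw_h\cdot\bv,$$
so adding and subtracting $\sum_E\int_E \l_h\varrho\,\bpiO\bw_h\cdot\bv$ and regrouping $\l_h\varrho\bpiO\bw_h$ with $\Div(\Cten\beps(\bpisi\bw_h))$ inside the volume integral produces exactly the stated formula. The main obstacle is pure bookkeeping: matching signs and orientations when collapsing the element boundary integrals into edge residuals, and correctly handling the factor $1/2$ on interior edges against the double-counting in $\sum_E\sum_{\ell\in\CS_E}$; once these are pinned down, everything else follows from the splitting by $\bpisi$, Green's formula, and the tautological insertion of $\bpiO\bw_h$.
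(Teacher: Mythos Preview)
Your proposal is correct and follows essentially the same route as the paper's proof: start from $a(\bw,\bv)=\l b(\bw,\bv)$, split $a^E(\bw_h,\bv)$ via $\bpisi$, integrate the polynomial part by parts, assemble the edge integrals into the $J_\ell$ residuals, and finally add and subtract $\l_h\varrho\,\bpiO\bw_h$ to produce the mass terms. Your account is in fact slightly more explicit than the paper's on the edge bookkeeping (the factor $1/2$ versus double counting in $\sum_E\sum_{\ell\in\CS_E}$) and on the algebraic identity used to insert $\bpiO\bw_h$.
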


\begin{proof} Using that $(\l,\bw)$ is a solution of Problem~\ref{P1},
adding and subtracting $\bpisi \bw_h$ and integrating by parts, we obtain the identity 
\begin{align*}
a(\be,\bv)&=\l b(\bw,\bv)-a(\bw_h,\bv)=\l b(\bw,\bv)-\sum_{E\in \CT_h}\left[a^E(\bw_h-\bpisi  \bw_h,\bv)+a^E(\bpisi  \bw_h,\bv)\right]\\
&=\l b(\bw,\bv)-\sum_{E\in \CT_h}a^E(\bw_h-\bpisi  \bw_h,\bv)-\sum_{E\in \CT_h}\left[-\int_{E}\Div(\Cten\beps(\bpisi  \bw_h))\cdot \bv+\int_{\partial E}\left(\Cten\beps(\bpisi  \bw_h) \bn \right)\cdot\bv\right]\\
&=\l b(\bw,\bv)-\l_{h} b(\bw_{h},\bv)+\sum_{E\in \CT_h}\l_{h}b^{E}(\bw_{h}-\bpiO  \bw_h\,\bv)-\sum_{E\in \CT_h}a^E(\bw_h-\bpisi  \bw_h,\bv)\\&
\quad+\sum_{E\in \CT_h}\left[\int_{E}(\l_{h}\varrho\bpiO  \bw_h+\Div(\Cten\beps(\bpisi  \bw_h)))\cdot \bv\right.\\\nonumber
&\quad-\left.\sum_{\ell \in \CS_{E}\cap(\CS_{\G_N})}\int_{\ell}\left(\Cten\beps(\bpisi  \bw_h) \bn \right)\cdot\bv+\dfrac{1}{2}\sum_{\ell \in \CS_{E}\cap\CS_{\O}}\int_{\ell}\left[\!\left[ \Cten\beps(\bpisi  \bw_h) \bn\right]\!\right]_{\ell}\bv\right].
\end{align*}
The proof is complete. 
\end{proof}

For all $E\in \CT_{h}$, we introduce the following local terms
and the local error indicator $\eta_{E}$ by:
\begin{align}
\label{RK}
\theta_{E}^2&:=b_h^E(\bw_h-\bpiO \bw_h,\bw_h-\bpiO  \bw_h)+a_h^E(\bw_h-\bpisi  \bw_h,\bw_h-\bpisi  \bw_h);\\
\label{etaK1}
R_{E}^{2}&:=h_{E}^{2}\| \l_{h} \varrho\bpiO\bw_{h}+\Div(\Cten\beps(\bpisi  \bw_h))\|_{0,E}^{2};\\
\label{etaK}
\eta_{E}^{2}&:= \theta_{E}^2+R_{E}^{2}+\sum_{\ell \in \CS_{E}}h_E\|J_{\ell}\|_{0,\ell}^2.   
\end{align}
Now, we are in a position to define the global error estimator by
\begin{align}
\label{etacald}
\eta&:=\left(\sum_{E\in \CT_h}\eta_{E}^2\right)^{1/2}.     
\end{align}

\begin{remark}
\label{RKK}
Contrary to the estimator obtained for standard finite
element approximations, in  the local estimator $\eta_{E}$,
for the virtual element approximations, appear the additional
term $\theta_{E}$. This term which represent the virtual
inconsistency of the VEM, has been also introduced
in \cite{BMm2as,CGPS} for a posteriori error estimates
of other VEM. Moreover,
we stress that the term $\theta_{E}$ can be directly
computed in terms of bilinear forms $S_{0}^{E}(\cdot,\cdot)$ and  $S_{\beps}^{E}(\cdot,\cdot)$.
In fact, 
\begin{align*}
\theta_{E}^2&=b_h^E(\bw_h-\bpiO \bw_h,\bw_h-\bpiO  \bw_h)+a_h^E(\bw_h-\bpisi  \bw_h,\bw_h-\bpisi  \bw_h)\\
&=S_{0}^{E}(\bw_h-\bpiO \bw_h,\bw_h-\bpiO  \bw_h)+S_{\beps}^E(\bw_h-\bpisi  \bw_h,\bw_h-\bpisi  \bw_h).
\end{align*}
\end{remark}

\subsection{Reliability of the a posteriori error estimator}

We now provide an upper bound for our error estimator.
\begin{theorem}
\label{erroipo}
There exists a constant $C>0$ independent of $h$ such that
\begin{align*}
\|\bw-\bw_h\|_{1,\O} &\leq C\left[\eta+\varrho\dfrac{(\l+\l_h)}{2}\|\bw-\bw_h\|_{0,\O}\right].
\end{align*}
\end{theorem}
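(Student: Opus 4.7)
The strategy is to combine the $\bV$-ellipticity of $a(\cdot,\cdot)$ with the error identity in Lemma~\ref{ext2}, using a Cl\'ement-like quasi-interpolant in $\bV_h$ to extract the correct $h$-scaling for the residual terms.

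First, Korn's inequality and $\bV$-ellipticity of $a(\cdot,\cdot)$ yield the dual characterization
$$\|\be\|_{1,\O}\leq C\sup_{\bv\in\bV\setminus\{\0\}}\frac{|a(\be,\bv)|}{\|\bv\|_{1,\O}},$$
so the task reduces to bounding $|a(\be,\bv)|$ uniformly for $\|\bv\|_{1,\O}=1$. I would then fix a Cl\'ement-type quasi-interpolant $\bv_I\in\bV_h$ of $\bv$ (cf. Lemma~\ref{estima4}) with standard patch-wise estimates $\|\bv-\bv_I\|_{0,E}+h_E|\bv-\bv_I|_{1,E}\leq Ch_E\|\bv\|_{1,\omega_E}$, and split $a(\be,\bv)=a(\be,\bv-\bv_I)+a(\be,\bv_I)$. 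Applying Lemma~\ref{ext2} to the first summand (the identity holds for any $\bv\in\bV$) and invoking the discrete VEM equation $a_h(\bw_h,\bv_I)=\l_h b_h(\bw_h,\bv_I)$ on the second gives
$$a(\be,\bv_I)=\l b(\bw,\bv_I)-\l_h b(\bw_h,\bv_I)+\l_h[b(\bw_h,\bv_I)-b_h(\bw_h,\bv_I)]-[a(\bw_h,\bv_I)-a_h(\bw_h,\bv_I)].$$
After adding, the data contributions from the two pieces collapse to $\l b(\bw,\bv)-\l_h b(\bw_h,\bv)$.

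For this data part, I would use the symmetric decomposition
$$\l b(\bw,\bv)-\l_h b(\bw_h,\bv)=\tfrac{\l+\l_h}{2}\,b(\be,\bv)+\tfrac{\l-\l_h}{2}\,b(\bw+\bw_h,\bv),$$
whose first term produces exactly the $\varrho\tfrac{\l+\l_h}{2}\|\be\|_{0,\O}\|\bv\|_{1,\O}$ contribution required by the theorem (and a residual term of size $|\l-\l_h|$ handled below). The residual integrals $\int_E R_E\cdot(\bv-\bv_I)$ and $\int_\ell J_\ell\cdot(\bv-\bv_I)$ inherit the $h$-scaling from the Cl\'ement estimates and are bounded by $C\eta\|\bv\|_{1,\O}$. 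The virtual-inconsistency sums $\sum_E\l_h b^E(\bw_h-\bpiO\bw_h,\bv-\bv_I)$ and $\sum_E a^E(\bw_h-\bpisi\bw_h,\bv-\bv_I)$ are controlled via the stability-derived estimates $\|\bw_h-\bpiO\bw_h\|_{0,E}\leq C\theta_E$ and $|\bw_h-\bpisi\bw_h|_{1,E}\leq C\theta_E$ (consequences of \eqref{stab0}--\eqref{stab2}). The consistency corrections $\l_h[b(\bw_h,\bv_I)-b_h(\bw_h,\bv_I)]$ and $a(\bw_h,\bv_I)-a_h(\bw_h,\bv_I)$ are reorganized, thanks to the $L^2$-orthogonality of $\bpiO$ and the energy-orthogonality built into $\bpisi$ (equation \eqref{proje_0}), into expressions of the form $b^E(\bw_h-\bpiO\bw_h,\bv_I-\bpiO\bv_I)-S_0^E(\bw_h-\bpiO\bw_h,\bv_I-\bpiO\bv_I)$ (and an $a^E$-analog), and then bounded by the stability of $S_0^E,S_{\beps}^E$ combined with $\|\bv_I-\bpiO\bv_I\|_{0,E}\leq Ch_E\|\bv\|_{1,\omega_E}$.

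The principal obstacle is absorbing the stray $|\l-\l_h|$ factor produced by the symmetric split: I would appeal to Remark~\ref{obserbapos} and apply the triangle inequality to obtain $|\l-\l_h|\leq C[\|\be\|_{1,\O}^2+\|\be\|_{0,\O}^2+\eta^2]$; the quadratic $\|\be\|_{1,\O}^2$ contribution can then be absorbed on the left-hand side (for $h$ sufficiently small) while the remaining pieces fit into the two terms already on the right-hand side of the claimed inequality. Collecting all estimates and taking the supremum over $\bv$ yields the announced reliability bound.
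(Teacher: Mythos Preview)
Your overall architecture is sound, but the paper takes a simpler route by testing with $\bv=\be=\bw-\bw_h$ directly (using $a(\be,\be)\geq C\|\be\|_{1,\O}^2$) rather than with a generic $\bv\in\bV$. The payoff is in the data term: since the eigenfunctions are normalized by $\|\bw\|_{0,\O}=\|\bw_h\|_{0,\O}=1$, one obtains the \emph{exact} identity
\[
\l\, b(\bw,\be)-\l_h\, b(\bw_h,\be)=\varrho\,\frac{\l+\l_h}{2}\,\|\be\|_{0,\O}^2
\]
(expand both sides and use $\|\be\|_{0,\O}^2=2-2\int_\O\bw\cdot\bw_h$), with no $|\l-\l_h|$ residue whatsoever. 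This is precisely where the factor $\varrho\frac{\l+\l_h}{2}$ in the statement comes from, and it makes the whole detour through Remark~\ref{obserbapos} unnecessary.

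By contrast, your symmetric split $\frac{\l+\l_h}{2}b(\be,\bv)+\frac{\l-\l_h}{2}b(\bw+\bw_h,\bv)$ for a general test function $\bv$ produces a stray $|\l-\l_h|$ term which you then control via Remark~\ref{obserbapos} and absorb. That absorption of a $C\|\be\|_{1,\O}^2$ contribution into the left-hand $\|\be\|_{1,\O}$ works only for $h$ sufficiently small, so your argument actually proves a slightly weaker statement than the theorem, which carries no smallness restriction on $h$. The remaining bounds you sketch (Cl\'ement scaling for the volumetric and edge residuals, and consistency/stability for the $b-b_h$ and $a-a_h$ corrections) match the paper's treatment of the terms $T_2$ through $T_5$ essentially verbatim.
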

\begin{proof}
For $\be=\bw-\bw_h\in \bV\subset [\HuO]^{2}$, there exists $\be_I\in \bV_{h}$
such that (see Lemma~\ref{estima4}),
\begin{equation}\label{cotainter}
\|\be-\be_{I}\|_{0,E}+h_{E}|\be-\be_{I}|_{1,E}\leq Ch_{E}\|\be\|_{1,E}.
\end{equation} 
Now, from Lemma~\ref{ext2}, we have that
\begin{align}\label{cotasalto}\nonumber
C \|\bw-\bw_h\|_{1,\O} ^{2}&\leq a(\bw-\bw_h,\be)=a(\bw-\bw_h,\be-\be_I)+a(\bw,\be_I)-a_h(\bw_h,\be_I)+a_h(\bw_h,\be_I)-a( \bw_h,\be_I)\\\nonumber
&=\underbrace{\l b( \bw,\be)-\l_h b(\bw_h,\be)}_{T_{1}}+\underbrace{\l_{h}\left[ b( \bw_{h},\be_{I})- b_{h}(\bw_h,\be_{I})\right]}_{T_2}+\underbrace{a_h(\bw_h,\be_I)-a( \bw_h,\be_I)}_{T_3}\\\nonumber
&\quad+\underbrace{\sum_{E\in \CT_h}\left[\l_{h}b^{E}(\bw_h-\bpiO  \bw_h, \be-\be_I)-a^{E}( \bw_h-\bpisi  \bw_h, \be-\be_I)\right]}_{T_{4}}\\
&\quad+\underbrace{\sum_{E\in \CT_h}\left[\int_E \left(\l_{h}\varrho\bpiO  \bw_h+\Div(\Cten\beps(\bpisi  \bw_h))\right)(\be-\be_I)+\sum_{\ell\in \CS_{E}}\int_{\ell}J_{\ell}(\be-\be_I)\right]}_{T_{5}}.
\end{align}
Now, we bound each term $T_{i}$, $i=1,\ldots,5$, with a constant $C$ independent of $h_{E}$.

First, we bound the term $T_1$, we use the definition of $b(\cdot,\cdot)$  and 
the fact that  $\|\bw\|_{0,\O}=\|\bw_h\|_{0,\O}=1$, we obtain
\begin{align}\label{cota3}
T_{1}&=\varrho\dfrac{(\l+\l_h)}{2}\|\be\|_{0,\O}^2\leq C\varrho\dfrac{(\l+\l_h)}{2}\|\be\|_{0,\O}\|\be\|_{1,\O}.
\end{align}

For the term $T_2$, we add and subtract $\bpiO  \bw_h$ on each $E\in\CT_h$,
and using the  {\it consistency} property  \eqref{consis1}, we have
\begin{align*}
T_2& \leq\l_{h}\left[\sum_{E\in \CT_h}b^E(\bw_h-\bpiO  \bw_h,\bw_h-\bpiO  \bw_h)^{1/2}b^E(\be_{I},\be_I)^{1/2}\right.\\
&\quad+\left.\sum_{E\in \CT_h}b_{h}^E(\bw_h-\bpiO  \bw_h,\bw_h-\bpiO  \bw_h)^{1/2}b_{h}^E(\be_{I},\be_I)^{1/2}\right]\\
& \leq C\sum_{E\in \CT_h}b_h^E(\bw_h-\bpiO  \bw_h,\bw_h-\bpiO  \bw_h)^{1/2}\|\be_{I}\|_{0,E}\\
& \leq C \left[\sum_{E\in \CT_h}b_h^E(\bw_h-\bpiO  \bw_h,\bw_h-\bpiO  \bw_h)\right]^{1/2}\|\be\|_{1,\O},
\end{align*}
where for the last estimate we have used the {\it stability} property  \eqref{stab2} and \eqref{cotainter}.

In a similar way, for the term $T_3$, we add and subtract
$\bpisi  \bw_h$ on each $E\in\CT_h$, using the  {\it consistency}
property  \eqref{consis0}, together a {\it stability}
property  \eqref{stab0} and \eqref{cotainter}, we have
\begin{align*}
T_3& \leq C \left(\sum_{E\in \CT_h}a_h^E(\bw_h-\bpisi  \bw_h,\bw_h-\bpisi  \bw_h)\right)^{1/2}\|\be\|_{1,\O}.
\end{align*}

To bound $T_4$, we use the  {\it stability} properties \eqref{stab0}
and \eqref{stab2} and \eqref{cotainter} to write
\begin{align*}
T_{4}&\leq \sum_{E\in \CT_h}\left[\l_{h}b^{E}(\bw_h-\bpiO  \bw_h, \be-\be_I)-a^{E}( \bw_h-\bpisi  \bw_h, \be-\be_I)\right]\\
&\leq C\left(\sum_{E\in \CT_h}\left[\l_{h}h_{E}b_{h}^{E}(\bw_h-\bpiO  \bw_h,\bw_h-\bpiO  \bw_h) +a_{h}^{E}( \bw_h-\bpisi  \bw_h,\bw_h-\bpisi  \bw_h)\right]\right)^{1/2}\|\be\|_{1,\O}.
\end{align*}
%
Therefore, by the above estimate and  \eqref{RK}, we have that 
 \begin{align}\label{cota6}
T_{2}+T_3+T_4 \leq C\left(\sum_{E\in \CT_h}\theta_{E}^{2}\right)^{1/2}\|\be\|_{1,\O}.
\end{align}

For the term $T_5$. First, we use a local trace inequality
(see \cite[Lemma~14]{BMRR}) and  \eqref{cotainter} to write
\begin{align*}
\|\be-\be_I\|_{0,\ell}&\leq C( h_E^{-1/2}\|\be-\be_I\|_{0,E}+h_E^{1/2}|\be-\be_I|_{1,E})\leq Ch_{E}^{1/2}\|\be\|_{1,E}.
\end{align*}
Hence, by the above inequality and \eqref{cotainter} again, we have,
\begin{align}\label{cot}
\nonumber
T_{5}&\leq C\sum_{E\in \CT_h}\left(\|\l_{h}\varrho\bpiO \bw_{h}+\Div(\Cten\beps(\bpisi  \bw_h))\|_{0,E}\|\be-\be_I\|_{0,E}+\sum_{\ell \in \CS_{E}}\|J_{\ell}\|_{0,\ell}\|\be-\be_I\|_{0,\ell}\right)\\\nonumber
&\leq C\sum_{E\in \CT_h}\left(h_E\|\l_{h}\varrho\bpiO \bw_{h}+\Div(\Cten\beps(\bpisi  \bw_h))\|_{0,E}\|\be\|_{1,E}+\sum_{\ell \in \CS_{E}}h_E^{1/2}\|J_{\ell}\|_{0,\ell}\|\be\|_{1,E}\right)\\
&\leq C\left[\sum_{E\in \CT_h}\left(h_E^2\|\l_{h}\varrho\bpiO \bw_{h}+\Div(\Cten\beps(\bpisi  \bw_h))\|_{0,E}^2+\sum_{\ell \in \CS_{E}}h_E\|J_{\ell}\|_{0,\ell}^2\right)\right]^{1/2}\|\be\|_{1,\O}.
\end{align}

Thus, the result follows from \eqref{cotasalto}--\eqref{cot}.
\end{proof}

The following result   establishes an estimate similar to the
above theorem for the projectors  $\bpiO$ and $\bpisi$.
\begin{corollary}
\label{corolario2}
There exists a constant $C>0$ independent of $h$ and $E$ such that:
\begin{equation*}
 \|\bw-\bw_h\|_{1,\O}+\left[\sum_{E\in \CT_h}\left(\|\bw-\bpiO \bw_h\|_{0,E}^{2}
 +|\bw-\bpisi\bw_{h}|_{1,E}^2\right)\right]^{1/2}\leq C\left[ \eta
 +\varrho\left(\dfrac{\l+\l_{h}}{2}\right)\|\bw-\bw_{h}\|_{0,\O}\right].
\end{equation*}
\end{corollary}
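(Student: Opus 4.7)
The plan is to deduce the corollary from Theorem~\ref{erroipo} by adding triangle-inequality estimates for the two projector terms, and then absorbing those extra terms into the estimator $\eta$ through the local consistency/stability of the VEM bilinear forms. Concretely, for each $E\in\CT_h$ I split
\[
\|\bw-\bpiO\bw_h\|_{0,E}\le\|\bw-\bw_h\|_{0,E}+\|\bw_h-\bpiO\bw_h\|_{0,E},
\qquad
|\bw-\bpisi\bw_h|_{1,E}\le|\bw-\bw_h|_{1,E}+|\bw_h-\bpisi\bw_h|_{1,E}.
\]
Summing over $E$ and using the global estimates $\|\bw-\bw_h\|_{0,\O}\le\|\bw-\bw_h\|_{1,\O}$ and $|\bw-\bw_h|_{1,\O}\le\|\bw-\bw_h\|_{1,\O}$, the first pieces are already controlled by Theorem~\ref{erroipo}, so the task reduces to proving
\[
\|\bw_h-\bpiO\bw_h\|_{0,E}^{2}+|\bw_h-\bpisi\bw_h|_{1,E}^{2}\le C\,\theta_E^{2}.
\]

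To handle the $\LO$ term, I would use that $\bpiO$ is a projector, so $\bv_h:=\bw_h-\bpiO\bw_h$ satisfies $\bpiO\bv_h=\0$; then by the definition~\eqref{bilineal_be} of $b_h^E$,
\[
b_h^E(\bv_h,\bv_h)=S_0^E(\bv_h,\bv_h)\ge\tilde c_0\,b^E(\bv_h,\bv_h)=\tilde c_0\,\varrho\,\|\bw_h-\bpiO\bw_h\|_{0,E}^{2},
\]
using \eqref{stabilS0}. Hence $\|\bw_h-\bpiO\bw_h\|_{0,E}^{2}\le C\,\theta_E^{2}$. For the $H^1$-seminorm term, the analogous argument uses that $\bpisi$ is a projector, so that $\bz_h:=\bw_h-\bpisi\bw_h$ satisfies $\bpisi\bz_h=\0$, giving by~\eqref{bilineal_ae} and \eqref{stabilS},
\[
a_h^E(\bz_h,\bz_h)=S_\beps^E(\bz_h,\bz_h)\ge c_0\,a^E(\bz_h,\bz_h)\ge 2c_0\mu_S\,\|\beps(\bz_h)\|_{0,E}^{2}.
\]
The main obstacle is here: I still need to pass from $\|\beps(\bz_h)\|_{0,E}$ to $|\bz_h|_{1,E}$ by a \emph{local} Korn inequality on the star-shaped element $E$. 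This is the only nontrivial point, and it is precisely what the second equation in~\eqref{proje_0} is designed for: it forces $\langle\langle\bz_h,\bp\rangle\rangle=0$ for every $\bp\in\ker(a^E)$ (rigid motions), so a Korn inequality with constant depending only on the shape-regularity constant $C_\CT$ (through $\bA_1,\bA_2$) yields $|\bz_h|_{1,E}^{2}\le C\,\|\beps(\bz_h)\|_{0,E}^{2}$, hence $|\bw_h-\bpisi\bw_h|_{1,E}^{2}\le C\,\theta_E^{2}$.

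Putting everything together, summing the two inequalities over $E\in\CT_h$ gives
\[
\sum_{E\in\CT_h}\!\left(\|\bw_h-\bpiO\bw_h\|_{0,E}^{2}+|\bw_h-\bpisi\bw_h|_{1,E}^{2}\right)\le C\!\!\sum_{E\in\CT_h}\theta_E^{2}\le C\eta^{2},
\]
by the definition~\eqref{etaK}--\eqref{etacald} of the global estimator. Combining this with the triangle-inequality step and Theorem~\ref{erroipo} applied to both $\|\bw-\bw_h\|_{1,\O}$ and (trivially) $\|\bw-\bw_h\|_{0,\O}\le\|\bw-\bw_h\|_{1,\O}$, I obtain the asserted bound with a constant depending only on $\mu_S$, $\varrho$, $\beta_*$, $\tilde c_0$, $c_0$ and $C_\CT$. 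The only non-routine ingredient is the local Korn inequality on star-shaped polygons with vanishing vertex-average against rigid motions; everything else is triangle inequalities and direct invocation of the stability/consistency assumptions.
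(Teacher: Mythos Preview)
Your proof is correct and follows essentially the same route as the paper's: triangle inequality on each element to reduce to $\|\bw_h-\bpiO\bw_h\|_{0,E}^{2}+|\bw_h-\bpisi\bw_h|_{1,E}^{2}\le C\theta_E^{2}$, then invoke Theorem~\ref{erroipo}. The paper obtains that bound in one line by citing \eqref{stabilS}, \eqref{stabilS0} and Remark~\ref{RKK}; you are simply more explicit about the local Korn inequality hidden in the passage from $a^E(\bz_h,\bz_h)$ to $|\bz_h|_{1,E}^{2}$, which is indeed justified by the rigid-motion constraint in \eqref{proje_0} and the shape-regularity assumptions.
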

\begin{proof}
For each polygon $E\in\CT_h$, we have that
\begin{align*}
\|\bw-\bpiO \bw_{h}\|_{0,E}+|\bw-\bpisi\bw_{h}|_{1,E}&\leq
C\left(\|\bw- \bw_h\|_{1,E}+\|\bw_h-\bpiO \bw_h\|_{0,E}+|\bw_h-\bpisi  \bw_h|_{1,E}\right),
\end{align*}
then, summing over all polygons we obtain 
\begin{align*}
\sum_{E\in \CT_h}\left(\|\bw-\bpiO \bw_{h}\|_{0,E}^{2}+\|\bw-\bpisi  \bw\|_{1,E}^2\right)
&\leq C\left[\|\bw- \bw_h\|_{1,\O}^2 +\sum_{E\in \CT_h}\left(\|\bw_h- \bpiO\bw_h\|_{0,E}^2+|\bw_h-\bpisi  \bw_h|_{1,E}^2\right)\right].\\
\end{align*} 
Hence, from \eqref{stabilS} and \eqref{stabilS0}, together with Remark~\ref{RKK},
we have that $\|\bw_h- \bpiO\bw_h\|_{0,E}^2+|\bw_h-\bpisi  \bw_h|_{1,E}^2\leq C\theta_{E}^{2}\leq C\eta_{E}^{2}$.
Thus, the result follows from Theorem~\ref{erroipo}.
\end{proof}
We prove a convenient upper bound for the eigenvalue approximation. 
\begin{corollary}
\label{cotalambda}
There exists a constant $C>0$ independent of $h$ such that.
 \begin{align*}
|\l-\l_h|\leq C\left[\eta+ \varrho\left(\dfrac{\l+\l_{h}}{2}\right)\|\bw-\bw_{h}\|_{0,\O}\right]^{2}.
  \end{align*} 
  \end{corollary}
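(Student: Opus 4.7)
The plan is to combine two results already in hand. First, Remark~\ref{obserbapos} (estimate \eqref{cotavalor}) provides the algebraic identity/bound
\[
|\l-\l_h^{(i)}|
\le C\!\left[\|\bw-\bw_h\|_{1,\O}^{2}
+\sum_{E\in\T_h}\!\left(\|\bw-\bpiO\bw_h\|_{0,E}^{2}
+|\bw-\bpisi\bw_h|_{1,E}^{2}\right)\right]\!,
\]
which was derived directly from the symmetry of $a(\cdot,\cdot)$, $b(\cdot,\cdot)$, $a_h(\cdot,\cdot)$, $b_h(\cdot,\cdot)$ and the consistency/stability properties \eqref{consis0}--\eqref{stab2}. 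Crucially, the right-hand side is (up to a constant) exactly the square of the quantity estimated in Corollary~\ref{corolario2}.

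The second ingredient is therefore Corollary~\ref{corolario2} itself, which gives
\[
\|\bw-\bw_h\|_{1,\O}
+\Bigl[\sum_{E\in\CT_h}\!\left(\|\bw-\bpiO\bw_h\|_{0,E}^{2}
+|\bw-\bpisi\bw_h|_{1,E}^{2}\right)\Bigr]^{1/2}
\le C\!\left[\eta+\varrho\Bigl(\tfrac{\l+\l_h}{2}\Bigr)\|\bw-\bw_h\|_{0,\O}\right].
\]

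With these in place, the proof is a one-line composition: I would apply \eqref{cotavalor}, majorize the square bracket on its right-hand side by the square of the left-hand side of Corollary~\ref{corolario2}, and then invoke the corollary and square. No delicate manipulation is needed, since the two earlier results have been designed exactly to fit together. The only point worth stating carefully is that the inequality $A^2+B^2\le (A+B)^2$ (for $A,B\ge 0$) lets us pass from the sum of squared quantities in \eqref{cotavalor} to the square of the sum appearing in Corollary~\ref{corolario2}, absorbing constants into $C$.

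There is no real obstacle here; the work has already been done in Remark~\ref{obserbapos} and Corollary~\ref{corolario2}. The statement is, in effect, the natural corollary one obtains by taking the reliability estimate for the eigenfunction and plugging it into the double-order bound for the eigenvalue that was first observed at the continuous level in the a priori analysis. Thus the proof will be short: one line invoking \eqref{cotavalor}, one line invoking Corollary~\ref{corolario2} with the elementary squaring step, and the conclusion.
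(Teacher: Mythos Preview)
Your proposal is correct and matches the paper's approach exactly: the paper's proof is the one-liner ``The result follows from Remark~\ref{obserbapos} (see \eqref{cotavalor}) and Corollary~\ref{corolario2},'' which is precisely the composition you describe.
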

  \begin{proof}
  The result follows from Remark \ref{obserbapos} (see \eqref{cotavalor})
  and  Corollary~\ref{corolario2}.
  
%
\end{proof}

The upper bounds in Corollaries \ref{corolario2} and \ref{cotalambda}
are not computable since they involve the error term
$\|\bw-\bw_{h}\|_{0,\O}$. Our next goal is to prove that
this term is asymptotically negligible.

\begin{theorem}
There exist positive constants $C$ and $h_{0}$ such that, for all $h<h_{0}$, there holds
\begin{align}
\label{ro9}
& \|\bw-\bw_h\|_{1,\O}+\left[\sum_{E\in \CT_h}(\|\bw-\bpiO \bw_h\|_{0,E}^{2}+|\bw-\bpisi  \bw_{h}|_{1,E}^2)\right]^{1/2}\leq C\eta;\\
&|\l-\l_{h}|\leq C \eta^{2}.\label{ro10}
\end{align}
\end{theorem}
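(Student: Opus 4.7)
The plan is to chain together three ingredients already established in the paper: Theorem~\ref{asintotico} (which controls $\|\bw-\bw_h\|_{0,\O}$ by $h^{r_1}$ times the $\H^1$ error plus the projector terms), Corollary~\ref{corolario2} (the reliability estimate, which still contains the pollution term $\|\bw-\bw_h\|_{0,\O}$ on its right-hand side), and Corollary~\ref{cotalambda} (the analogous pollution-type bound for $|\lambda-\lambda_h|$). The strategy is a standard absorption argument that exploits the extra factor $h^{r_1}$ coming from the $\LO$-duality in Theorem~\ref{asintotico}.

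Concretely, I would introduce the abbreviation
\[
A_h := \|\bw-\bw_h\|_{1,\O} + \left[\sum_{E\in\CT_h}\Big(\|\bw-\bpiO\bw_h\|_{0,E}^{2} + |\bw-\bpisi\bw_h|_{1,E}^{2}\Big)\right]^{1/2}.
\]
By Theorem~\ref{asintotico} there exists $C_1>0$ with $\|\bw-\bw_h\|_{0,\O} \le C_1 h^{r_1} A_h$, and by Corollary~\ref{corolario2} there exists $C_2>0$ with $A_h \le C_2\bigl[\eta + \varrho\tfrac{\lambda+\lambda_h}{2}\|\bw-\bw_h\|_{0,\O}\bigr]$. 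Substituting the first into the second yields
\[
A_h \le C_2\eta + C_1 C_2\,\varrho\,\tfrac{\lambda+\lambda_h}{2}\,h^{r_1} A_h.
\]
Since $\lambda_h \to \lambda$ (from Theorem~\ref{gap} and Theorem~\ref{convibration}), the prefactor $C_1 C_2\,\varrho\,\tfrac{\lambda+\lambda_h}{2}$ is bounded, so there exists $h_0>0$ such that $C_1 C_2\,\varrho\,\tfrac{\lambda+\lambda_h}{2}\,h^{r_1} \le \tfrac{1}{2}$ for all $h<h_0$. Absorbing this term into the left-hand side gives $A_h \le 2C_2\eta$, which is exactly \eqref{ro9}.

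For \eqref{ro10}, I would insert the bound just obtained back into Theorem~\ref{asintotico} to get $\|\bw-\bw_h\|_{0,\O} \le C h^{r_1}\eta$ for $h<h_0$, and then apply Corollary~\ref{cotalambda}:
\[
|\lambda-\lambda_h| \le C\bigl[\eta + \varrho\tfrac{\lambda+\lambda_h}{2}\|\bw-\bw_h\|_{0,\O}\bigr]^{2} \le C\bigl[\eta + C' h^{r_1}\eta\bigr]^{2} \le C''\eta^{2},
\]
which is \eqref{ro10}. The main (and really only) delicate point is the absorption step; it relies on $r_1>0$ from Lemma~\ref{LEM:REG} so that $h^{r_1}\to 0$, together with the uniform boundedness of $\lambda_h$, both of which are already in hand. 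Everything else is just bookkeeping.
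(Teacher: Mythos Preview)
Your proposal is correct and follows essentially the same route as the paper: combine Theorem~\ref{asintotico} with Corollary~\ref{corolario2} to obtain an inequality of the form $A_h \le C(\eta + h^{r_1}A_h)$, absorb the $h^{r_1}A_h$ term for $h$ small enough to get \eqref{ro9}, then feed this back through Theorem~\ref{asintotico} and Corollary~\ref{cotalambda} to conclude \eqref{ro10}. Your write-up is in fact slightly more explicit than the paper's (you spell out the role of the boundedness of $\lambda_h$ in the absorption step), but the argument is the same.
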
 
\begin{proof}
From Theorem~\ref{asintotico} and Corollary~\ref{corolario2} we have
\begin{align*}
 \|\bw-\bw_h\|_{1,\O}+\left[\sum_{E\in \CT_h}(\|\bw-\bpiO \bw_h\|_{0,E}^{2}+|\bw-\bpisi  \bw_{h}|_{1,E}^2)\right]^{1/2}&\leq \\
&\hspace{-7cm}C\left( \eta+ h^{r}\left\{ \|\bw-\bw_h\|_{1,\O}+\left[\sum_{E\in \CT_h}\left(\|\bw-\bpiO \bw_h\|_{0,E}^{2}+|\bw-\bpisi  \bw_{h}|_{1,E}^2\right)\right]^{1/2}\right\}\right).
\end{align*}
Hence, it is straightforward to check that there exists $h_{0}>0$
such that for all $h<h_{0}$ \eqref{ro9} holds true.

On the other hand, from Lemma \ref{asintotico} and \eqref{ro9}
we have that for all $h<h_{0}$
$$\|\bw-\bw_{h}\|_{0,\O}\leq Ch^r\eta.$$
Then, for $h$ small enough, \eqref{ro10} follows
from Corollary~\ref{cotalambda} and the above estimate. 
\end{proof}

\subsection{Efficiency of the a posteriori error estimator}

In the present section we will show that the local error indicators
$\eta_{E}$ (cf. \eqref{etaK}) are efficient in the sense of pointing
out which polygons should be effectively refined.

First, we prove an upper estimate of the volumetric residual term $R_{E}$
introduced in \eqref{etaK1}.
\begin{lemma}
\label{eficiencia1}
There exists a constant $C>0$ independent of $h_E$, such that
\begin{equation*}
R_{E}\leq C \left(|\bw-\bw_{h}|_{1,E}+\theta_{E}+h_{E}\|\l \bw-\l_h \bw_h\|_{0,E}\right).
\end{equation*}
\end{lemma}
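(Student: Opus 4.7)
The plan is to use the classical residual efficiency argument via interior bubble functions, combined with the observation that $\bp := \lambda_h\varrho\bpiO\bw_h + \Div(\Cten\beps(\bpisi\bw_h))$ is a polynomial on $E$ (both $\bpiO\bw_h$ and $\bpisi\bw_h$ lie in $[\bbP_k(E)]^2$), so we may apply Lemma~\ref{burbujainterior}. Specifically, I would set $\bv := \psi_E\bp$, extended by zero outside $E$, so that $\bv\in [H^1_0(E)]^2\subset\bV$, and use
$$\|\bp\|_{0,E}^2 \;\le\; C\int_E \psi_E\,|\bp|^2 \;=\; C\int_E \bp\cdot\bv.$$

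Next, I would rewrite the integral $\int_E\bp\cdot\bv$ in a form that exposes the eigenfunction error. Integration by parts on the divergence term is legitimate because $\bv$ vanishes on $\partial E$, giving
$$\int_E\bp\cdot\bv \;=\; \lambda_h\int_E\varrho\,\bpiO\bw_h\cdot\bv \;-\; \int_E \Cten\beps(\bpisi\bw_h):\beps(\bv).$$
Since $\bv\in\bV$ is supported in $E$, the continuous eigenvalue relation $a(\bw,\bv)=\lambda b(\bw,\bv)$ localizes to $E$, allowing me to substitute $\int_E\Cten\beps(\bw):\beps(\bv) = \lambda\int_E\varrho\,\bw\cdot\bv$. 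After introducing and subtracting $\bw$ in both terms, I obtain
$$\int_E \bp\cdot\bv \;=\; \int_E \varrho\bigl(\lambda_h(\bpiO\bw_h-\bw_h) + (\lambda_h\bw_h-\lambda\bw)\bigr)\cdot\bv \;+\; \int_E \Cten\beps\bigl((\bw-\bw_h)+(\bw_h-\bpisi\bw_h)\bigr):\beps(\bv).$$

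I would then invoke the bubble-function bounds from Lemma~\ref{burbujainterior}, namely $\|\bv\|_{0,E}\le C\|\bp\|_{0,E}$ and $|\bv|_{1,E}\le Ch_E^{-1}\|\bp\|_{0,E}$, together with Cauchy--Schwarz. Dividing through by $\|\bp\|_{0,E}$ and multiplying by $h_E$ yields
$$R_E \;\le\; C\bigl(h_E\|\bpiO\bw_h-\bw_h\|_{0,E} + h_E\|\lambda\bw-\lambda_h\bw_h\|_{0,E} + |\bw-\bw_h|_{1,E} + |\bw_h-\bpisi\bw_h|_{1,E}\bigr).$$
To conclude, I would use stability properties \eqref{stabilS}--\eqref{stabilS0} and Remark~\ref{RKK} to absorb $\|\bpiO\bw_h-\bw_h\|_{0,E}$ and $|\bw_h-\bpisi\bw_h|_{1,E}$ into $\theta_E$ (the factor $h_E$ in front of the projection term is harmless since the mesh size is bounded).

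The only delicate point is ensuring the integration-by-parts step is clean despite working on a single polygon: this is where choosing $\bv$ with compact support in $E$ is essential, as it kills all boundary contributions and also guarantees $\bv\in\bV$ so that the continuous equation can be tested against it. After that, the rest of the estimate is a routine accounting of projection errors against the inconsistency term $\theta_E$.
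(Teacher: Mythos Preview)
Your proposal is correct and follows essentially the same approach as the paper's proof. The only cosmetic difference is that the paper invokes the pre-established error identity of Lemma~\ref{ext2} (specialized to the bubble-supported test function, so that the edge terms drop out), whereas you perform the integration by parts and the substitution of the continuous eigenvalue relation directly; the resulting decomposition and the subsequent use of Lemma~\ref{burbujainterior} together with \eqref{stabilS}--\eqref{stabilS0} to absorb the projection discrepancies into $\theta_E$ are identical.
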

\begin{proof}
For any $E\in\CT_h$, let $\psi_{E}$ be the corresponding
interior bubble function,
we define  $\bv:=\psi_{E}\left(\l_{h}\bpiO \bw_{h}+\Div(\Cten\beps(\bpisi  \bw_h))\right)$. 
Since $\bv$ vanishes on
the boundary of  $E$. It may be extended by zero to the  whole domain $\O$. This extension, again denoted by $\bv$, belongs to $[\HuO]^{2}$ and from Lemma~\ref{ext2}, we have
\begin{align*}
a^{E}(\be,\bv)&=\l b^{E}(\bw,\bv)-\l_h b^{E}(\bw_h,\bv)+\l_{h}b^E(\bw_h-\bpiO  \bw_h,\bv)-a^E(\bw_h-\bpisi  \bw_h,\bv)\\
&\quad+\int_{E}\left(\l_{h}\bpiO \bw_{h}+\Div(\Cten\beps(\bpisi  \bw_h))\right)\cdot\bv.
\end{align*}
Since  $\left(\l_{h}\bpiO \bw_{h}+\Div(\Cten\beps(\bpisi  \bw_h))\right)\in [\bbP_k(E)]^{2}$,
using Lemma~\ref{burbujainterior} and the above equality, we obtain
\begin{align}\label{ghtjy}\nonumber
C^{-1}\|\l_{h}\bpiO \bw_{h}+\Div(\Cten\beps(\bpisi  \bw_h))\|_{0,E}^2&\leq \int_{E}\psi_{E}\left(\l_{h}\bpiO \bw_{h}+\Div(\Cten\beps(\bpisi  \bw_h))\right)^2\\\nonumber
&\hspace{-5.5cm}\leq C\left[\left(|\be|_{1,E}+|\bw_h-\bpisi  \bw_h|_{1,E}\right)\left|\psi_{E}(\l_{h}\bpiO \bw_{h}+\Div(\Cten\beps(\bpisi  \bw_h)))\right|_{1,E}\right.\\\nonumber
&\hspace{-5.0cm}+ \left. \left(\|\bw_h-\bpiO  \bw_h\|_{0,E}+\|\l \bw-\l_h \bw_h\|_{0,E}\right)\|\psi_{E}(\l_{h}\bpiO \bw_{h}+\Div(\Cten\beps(\bpisi  \bw_h)))\|_{0,E}\right]\\
&\hspace{-5.5cm}\leq Ch_{E}^{-1} \left[|\be|_{1,E}+\theta_{E}+h_{E}\left(\theta_{E}+\|\l \bw-\l_h \bw_h\|_{0,E}\right)\right]\|\l_{h}\bpiO \bw_{h}+\Div(\Cten\beps(\bpisi  \bw_h))\|_{0,E}.
\end{align}
where, for the  last estimate,  we have used again
Lemma~\ref{burbujainterior} together with  \eqref{stabilS}, \eqref{stabilS0} and Remark~\ref{RKK}.
Thus, multiplying the above inequality by $h_{E}$, allow us to conclude the proof.
\end{proof}
Next goal is to obtain an upper estimate
for the local term $\theta_{E}.$
\begin{lemma}
\label{cotaR}
There  exists $C>0$ independent of $h_E$ such that
\begin{equation*}
\theta_{E}\leq C\left(\|\bw-  \bw_{h}\|_{1,E}+\|\bw-\bpiO  \bw_h\|_{0,E}+|\bw-\bpisi  \bw_{h}|_{1,E}\right).
\end{equation*}
\end{lemma}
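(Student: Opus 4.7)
The plan is to exploit the explicit formula for $\theta_E$ given in Remark~\ref{RKK}, which expresses it directly in terms of the stabilizing bilinear forms $S_0^E(\cdot,\cdot)$ and $S_{\beps}^E(\cdot,\cdot)$, and then to apply the stabilization upper bounds \eqref{stabilS}--\eqref{stabilS0} followed by the triangle inequality. This is an efficiency-type estimate where we only need an upper bound (no bubble-function arguments are required, unlike the bound for $R_E$).

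First I would observe that $\bpisi$ and $\bpiO$ are projections onto $[\bbP_k(E)]^2$ (the first by construction via \eqref{proje_0}, the second because it is the $L^2$-projection). In particular,
\[
\bpisi(\bw_h-\bpisi\bw_h)=\bO,\qquad \bpiO(\bw_h-\bpiO\bw_h)=\bO,
\]
so that the hypothesis needed to invoke the right-hand inequalities of \eqref{stabilS} and \eqref{stabilS0} is satisfied for the argument $\bw_h-\bpisi\bw_h$ (and the second bound holds unconditionally). Combined with Remark~\ref{RKK}, this yields
\[
\theta_E^{2}\le c_1\,a^{E}(\bw_h-\bpisi\bw_h,\bw_h-\bpisi\bw_h)
+\tilde c_1\,b^{E}(\bw_h-\bpiO\bw_h,\bw_h-\bpiO\bw_h).
\]

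Next I would control each term by a polynomial-approximation quantity: since $\Cten$ is a bounded elasticity tensor, $a^E(\bv,\bv)\le C|\bv|_{1,E}^{2}$, and $b^E(\bv,\bv)=\varrho\|\bv\|_{0,E}^{2}$. Hence
\[
\theta_E^{2}\le C\bigl(|\bw_h-\bpisi\bw_h|_{1,E}^{2}+\|\bw_h-\bpiO\bw_h\|_{0,E}^{2}\bigr).
\]
Adding and subtracting $\bw$ inside each seminorm/norm and using the triangle inequality gives
\[
|\bw_h-\bpisi\bw_h|_{1,E}\le |\bw-\bw_h|_{1,E}+|\bw-\bpisi\bw_h|_{1,E},
\]
\[
\|\bw_h-\bpiO\bw_h\|_{0,E}\le \|\bw-\bw_h\|_{0,E}+\|\bw-\bpiO\bw_h\|_{0,E}.
\]

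Finally I would absorb $\|\bw-\bw_h\|_{0,E}\le\|\bw-\bw_h\|_{1,E}$, take square roots, and collect terms to arrive at
\[
\theta_E\le C\bigl(\|\bw-\bw_h\|_{1,E}+\|\bw-\bpiO\bw_h\|_{0,E}+|\bw-\bpisi\bw_h|_{1,E}\bigr),
\]
which is the claimed bound. No serious obstacle is expected: the only subtle point is verifying that $\bpisi$ and $\bpiO$ act as projections so that the ``kernel-of-$\bpisi$'' side of \eqref{stabilS} can be invoked on $\bw_h-\bpisi\bw_h$; everything else is triangle inequality and the continuity of $a^E$ and $b^E$.
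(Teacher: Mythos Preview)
Your proposal is correct and follows essentially the same route as the paper: invoke Remark~\ref{RKK} to rewrite $\theta_E^2$ in terms of the stabilizing forms, apply the upper bounds \eqref{stabilS}--\eqref{stabilS0} (using that $\bpisi(\bw_h-\bpisi\bw_h)=\bO$), bound $a^E$ and $b^E$ by the $H^1$-seminorm and $L^2$-norm respectively, and finish with the triangle inequality inserting $\bw$. The paper's proof is terser but identical in substance; your additional remark that $\bpisi$ must be a genuine projection so that \eqref{stabilS} applies to $\bw_h-\bpisi\bw_h$ is a useful clarification that the paper leaves implicit.
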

\begin{proof}
From definition of $\theta_{E}$,
together with Remark~\ref{RKK} and estimates  \eqref{stabilS} and \eqref{stabilS0}, we have 
\begin{align*}
\theta_{E}&\leq C\left(\|\bw_h-\bpiO  \bw_h\|_{0,E}+|\bw_h-\bpisi  \bw_h|_{1,E}\right)\\
&\leq C\left(\|\bw-  \bw_{h}\|_{1,E}+\|\bw-\bpiO  \bw_h\|_{0,E}+|\bw-\bpisi  \bw_{h}|_{1,E}\right).
\end{align*}
The proof is complete.
\end{proof}

The following lemma provides an upper estimate for
the jump terms of the local error indicator $\eta_{E}$ (cf. \eqref{etaK}).

\begin{lemma}
There exists a constant $C>0$ independent of $h_E$, such that
\label{lema4}
\begin{align}
\label{eqa}
h_E^{1/2}\left\|J_{\ell}\right\|_{0,\ell}&\leq C\left(| \bw-\bw_{h}|_{1,E}+\theta_{E}+h_{E}\|\l \bw-\l_h \bw_h\|_{0,E}\right)\quad\forall\ell\in\CS_{E}\cap\partial \O\neq\emptyset, \\\label{eqb}
h_E^{1/2}\left\|J_{\ell}\right\|_{0,\ell}&\leq C\left[\sum_{E'\in \omega_{\ell}}(|\be|_{1,E'}+\theta_{E'}+h_{E}\|\l \bw-\l_h \bw_h\|_{0,E'})\right]\qquad \forall\ell\in\CS_{E}\cap\CS_{\O},
\end{align}
where  $\omega_{\ell}:=\{E'\in\CT_{h}: \ell\subset\partial E'\}$.
\end{lemma}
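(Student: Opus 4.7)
The plan is to follow the classical Verf\"urth bubble-function strategy, adapted to account for the virtual-element consistency terms. I first treat a boundary edge $\ell\in\CS_E\cap\CS_{\G_N}$ (if $\ell\in\CS_{\G_D}$ then $J_\ell=\0$ and \eqref{eqa} is trivial), and then explain the minor modification needed for an interior edge.

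For the boundary case, set $\bv:=\psi_\ell J_\ell$, where $J_\ell$ is extended from $\ell$ to $E$ according to Remark~\ref{extencion}. Because $\psi_\ell$ vanishes on $\partial E\setminus\ell$ and $\ell\subset\G_N$, extending $\bv$ by zero outside $E$ yields $\bv\in\bV$, and Lemma~\ref{burbuja} provides the scaling bounds $\|\bv\|_{0,E}\le Ch_E^{1/2}\|J_\ell\|_{0,\ell}$ and $|\bv|_{1,E}\le Ch_E^{-1/2}\|J_\ell\|_{0,\ell}$. Applying the error identity in Lemma~\ref{ext2} with this test function (only the contributions from the single element $E$ survive) and isolating the edge term, I obtain
\begin{align*}
\int_\ell J_\ell\cdot\bv
&= a^E(\be,\bv)-\bigl[\l b^E(\bw,\bv)-\l_h b^E(\bw_h,\bv)\bigr]
-\l_h b^E(\bw_h-\bpiO\bw_h,\bv)\\
&\quad+a^E(\bw_h-\bpisi\bw_h,\bv)
-\int_E\bigl(\l_h\varrho\bpiO\bw_h+\Div(\Cten\beps(\bpisi\bw_h))\bigr)\cdot\bv.
\end{align*}

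Next I use the lower bound from Lemma~\ref{burbuja}, namely $C^{-1}\|J_\ell\|_{0,\ell}^2\le\int_\ell\psi_\ell J_\ell^2=\int_\ell J_\ell\cdot\bv$, and bound each of the five terms on the right via Cauchy--Schwarz together with the bubble scalings. This produces the five contributions $h_E^{-1/2}|\be|_{1,E}$, $h_E^{1/2}\|\l\bw-\l_h\bw_h\|_{0,E}$, $h_E^{1/2}\|\bw_h-\bpiO\bw_h\|_{0,E}$, $h_E^{-1/2}|\bw_h-\bpisi\bw_h|_{1,E}$, and $h_E^{1/2}\|\l_h\varrho\bpiO\bw_h+\Div(\Cten\beps(\bpisi\bw_h))\|_{0,E}$, each multiplied by $\|J_\ell\|_{0,\ell}$. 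The two virtual-inconsistency contributions are controlled by $\theta_E$ via the stability assumptions \eqref{stabilS}--\eqref{stabilS0} and Remark~\ref{RKK}, and the last contribution equals $h_E^{-1/2}R_E$, which I reabsorb using Lemma~\ref{eficiencia1}. Dividing through by $\|J_\ell\|_{0,\ell}$ and multiplying by $h_E^{1/2}$ yields \eqref{eqa}.

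For an interior edge $\ell\in\CS_E\cap\CS_{\O}$, with $\omega_\ell=\{E^+,E^-\}$, I extend $J_\ell$ from $\ell$ to both elements of $\omega_\ell$ by Remark~\ref{extencion} and set $\bv=\psi_\ell J_\ell$ on $\omega_\ell$, extended by zero elsewhere, so again $\bv\in\bV$. Applying Lemma~\ref{ext2} now produces a sum over $E'\in\omega_\ell$ of the same five local terms that arose before (there is no boundary edge contribution because $\bv$ is supported away from $\partial\O$, and the interior jumps across edges other than $\ell$ do not appear since $\bv$ vanishes on those). Repeating the bubble argument and again invoking Lemma~\ref{eficiencia1} on each of $E^+$ and $E^-$ gives \eqref{eqb}.

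The only non-routine aspect is the presence of the virtual-inconsistency terms $\bw_h-\bpisi\bw_h$ and $\bw_h-\bpiO\bw_h$ on the right-hand side of Lemma~\ref{ext2}, which are absent in the standard FEM residual identity; I expect controlling these uniformly by $\theta_{E'}$ (and hence by $\eta_{E'}$) via \eqref{stabilS}--\eqref{stabilS0} and Remark~\ref{RKK} to be the main technical point where the VEM proof departs from its finite-element counterpart.
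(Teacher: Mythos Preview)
Your proposal is correct and follows essentially the same approach as the paper's proof: both use the edge bubble $\psi_\ell J_\ell$ extended via Remark~\ref{extencion}, insert it into the error identity of Lemma~\ref{ext2}, apply the scaling estimates of Lemma~\ref{burbuja}, absorb the virtual-inconsistency contributions into $\theta_E$ via \eqref{stabilS}--\eqref{stabilS0} and Remark~\ref{RKK}, and handle the interior residual by invoking Lemma~\ref{eficiencia1} (the paper cites the intermediate estimate~\eqref{ghtjy} from that lemma's proof, which is equivalent). The interior-edge case is treated identically in both, summing over $E'\in\omega_\ell$.
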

\begin{proof}
First, for $\ell \in \CS_{E}\cap\CS_{\G_{D}}$,
we have $J_{\ell}=\0$, then \eqref{eqa} is obvious.

Secondly, for $\ell \in \CS_{E}\cap\CS_{\G_{N}}$,
we extend $J_{\ell}\in[\bbP_{k-1}(\ell)]^2$ to the element $E$ as in Remark~\ref{extencion}.
Let $\psi_{\ell}$ be the corresponding edge bubble function.
We define $\bv:=J_{\ell}\psi_{\ell}$. Then, $\bv$ may be extended by zero to the whole domain $\O$.
This extension, again denoted by $\bv$, belongs to $[\HuO]^2$ and from Lemma~\ref{ext2} we have  that
\begin{align*}
a^{E}(\be,\bv)&=\l b^{E}(\bw,J_{\ell}\psi_{\ell})-\l_h b^{E}(\bw_h,J_{\ell}\psi_{\ell})+b^E(\bw_h-\bpiO  \bw_h,J_{\ell}\psi_{\ell})-a^E(\bw_h-\bpisi  \bw_h,J_{\ell}\psi_{\ell})\\
&\quad+\int_{E}\left(\l_{h}\bpiO \bw_{h}+\Div(\Cten\beps(\bpisi  \bw_h))\right)\cdot J_{\ell}\psi_{\ell}+\int_{\ell}J_{\ell}^{2}\psi_{\ell}.
\end{align*}
For $J_{\ell}\in[\bbP_{k-1}(\ell)]^2$,
from Lemma~\ref{burbuja} and the above equality we obtain 
\begin{align*}
\left\|J_{\ell}\right\|^2_{0,\ell}&\leq \int_{\ell} J_{\ell}^2\psi_{\ell}\leq
C\left[\left(| \be|_{1,E}+| \bw_{h}-\bpisi\bw_{h}|_{1,E}\right)
\left|\psi_{\ell}J_{\ell}\right|_{1,E}\right.\\
&\quad+\left. \left(\|\l \bw-\l_h \bw_h\|_{0,E}+\|\l_{h}\bpiO \bw_{h}+\Div(\Cten\beps(\bpisi  \bw_h))\|_{0,E}+\|\bw_h-\bpiO  \bw_h\|_{0,E}
\right)\left\|J_{\ell}\psi_{\ell}\right\|_{0,E}\right]\\
&\leq C\left[\left(| \be|_{1,E}+\theta_{E}\right)
h_E^{-1/2}\left\|J_{\ell}\right\|_{0,\ell}+\left(\|\l \bw-\l_h \bw_h\|_{0,E}+(1+h_{E}^{-1})\theta_E
\right)h_E^{1/2}\left\|J_{\ell}\right\|_{0,\ell}\right]\\
&\leq Ch_E^{-1/2}\left\|J_{\ell}\right\|_{0,\ell} \left[|\be|_{1,E}+\theta_{E}+h_{E}\left(\theta_E+\|\l \bw-\l_h \bw_h\|_{0,E}\right)\right],
\end{align*}
where  we have used again Lemma \ref{burbuja} together
with estimate \eqref{ghtjy} of the proof of Lemma \ref{eficiencia1}.
Multiplying by $h_{E}^{1/2}$ the above inequality allows us to conclude \eqref{eqa}.

Finally, for $\ell \in \CS_{E}\cap\CS_\O$, we extend
$\bv:=J_{\ell}\psi_{\ell}$ to $[\HuO]^2$ as above  again. Taking into account that
$J_{\ell}\in[\bbP_{k-1}(\ell)]^2$ and $\psi_{\ell}$ is a quadratic bubble function in $E$,
from Lemma~\ref{ext2} we obtain
\begin{align*}
a(\be,\bv)&=\l b(\bw,J_{\ell}\psi_{\ell})-\l_h b(\bw_h,J_{\ell}\psi_{\ell})
+\sum_{E'\in \omega_{\ell}}b^{E'}(\bw_h-\bpiO  \bw_h,J_{\ell}\psi_{\ell})-\sum_{E'\in \omega_{\ell}}a^{E'}(\bw_h-\bpisi  \bw_h,J_{\ell}\psi_{\ell})\\
&+\sum_{E'\in \omega_{\ell}}\left(\int_{E'}\left(\l_{h}\bpiO \bw_{h}
+\Div(\Cten\beps(\bpisi  \bw_h))\right)\cdot J_{\ell}\psi_{\ell}+\int_{l}J_{\ell}^2\psi_{\ell}\right).
\end{align*}
Then, proceeding analogously to the above case we obtain
\begin{align*}
\|J_{\ell}\|^2_{0,\ell}&\leq Ch_{E}^{-1/2}\|J_{\ell}\|_{0,\ell}
\left[\sum_{E'\in \omega_{\ell}}(|\be|_{1,E'}+\theta_{E'}+h_{E}\|\l \bw-\l_h \bw_h\|_{0,E'})\right].
\end{align*}
Thus, the proof is complete.
\end{proof}

Now, we are in a position to prove the efficiency
of our local error indicator $\eta_E$. 
\begin{theorem}
\label{eficiencia}
There  exists $C>0$ such that
$$
\eta_{E}^2\leq 
C\left[\displaystyle\sum_{E'\in \omega_{E}}\left(\|\bw-\bw_h\|_{1,E'}^{2}
+\|\bw-\bpiO  \bw_h\|_{0,E'}+|\bw-\bpisi  \bw_{h}|_{1,E'}^{2}+h_{E}^{2}\|\l \bw-\l_h \bw_h\|_{0,E'}^{2}\right)\right],$$
$\text{where }\omega_{E}:=\{E'\in \CT_{h}: E' \text{ and } E\text{ share an edge }  \}$.
\end{theorem}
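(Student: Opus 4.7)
The plan is to assemble the three auxiliary bounds already established, namely Lemma~\ref{eficiencia1} for the volumetric residual $R_E$, Lemma~\ref{cotaR} for the virtual inconsistency term $\theta_E$, and Lemma~\ref{lema4} for the edge jumps $h_E^{1/2}\|J_\ell\|_{0,\ell}$. Since the global error indicator is defined by $\eta_E^2 = \theta_E^2 + R_E^2 + \sum_{\ell\in\CS_E} h_E\|J_\ell\|_{0,\ell}^2$, it suffices to estimate each of these three contributions separately and then sum.

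First I would square the estimate of Lemma~\ref{eficiencia1} to obtain
\[
R_E^2 \le C\bigl(|\bw-\bw_h|_{1,E}^2 + \theta_E^2 + h_E^2\|\l\bw-\l_h\bw_h\|_{0,E}^2\bigr),
\]
and similarly square Lemma~\ref{cotaR} to control $\theta_E^2$ in terms of $\|\bw-\bw_h\|_{1,E}^2$, $\|\bw-\bpiO\bw_h\|_{0,E}^2$ and $|\bw-\bpisi\bw_h|_{1,E}^2$. For the edge contribution I would split the sum over $\CS_E$ into the three cases: edges lying on $\G_D$ (where $J_\ell=\bO$ trivially), edges in $\CS_{\G_N}$ (estimated by \eqref{eqa}), and interior edges (estimated by \eqref{eqb}, which brings in the patch $\omega_\ell$). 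Squaring these bounds and summing over $\ell\in\CS_E$ would yield an upper bound involving $|\be|_{1,E'}^2$, $\theta_{E'}^2$ and $h_E^2\|\l\bw-\l_h\bw_h\|_{0,E'}^2$ for $E'\in\omega_E$.

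At that point, the only remaining ingredient is to eliminate the $\theta_{E'}$ terms that appear on the right-hand sides of the $R_E$ and jump bounds. This is done by invoking Lemma~\ref{cotaR} one more time, which replaces every $\theta_{E'}^2$ by the intrinsic error quantities $\|\bw-\bw_h\|_{1,E'}^2 + \|\bw-\bpiO\bw_h\|_{0,E'}^2 + |\bw-\bpisi\bw_h|_{1,E'}^2$. Collecting all the pieces and using the assumption $\bA_1$ (so that $h_{E'}\sim h_E$ for $E'\in\omega_E$, in particular in the last term involving $\|\l\bw-\l_h\bw_h\|_{0,E'}$) gives exactly the claimed inequality.

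The argument is essentially bookkeeping: all the real work was done in Lemmas~\ref{eficiencia1}, \ref{cotaR} and \ref{lema4} via standard bubble-function techniques. The only subtlety I expect is ensuring that the shape-regularity assumptions allow comparing $h_E$ and $h_{E'}$ uniformly within the patch $\omega_E$, so that a single $h_E^2$ factor can be written in front of the high-order term $\|\l\bw-\l_h\bw_h\|_{0,E'}^2$ for all neighbors $E'$; this is immediate from $\bA_1$. No further obstruction is foreseen.
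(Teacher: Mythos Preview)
Your proposal is correct and matches the paper's own proof, which simply states that the result ``follows immediately from Lemmas~\ref{eficiencia1}--\ref{lema4}''; you have merely made explicit the bookkeeping of squaring, summing over $\ell\in\CS_E$, and absorbing the intermediate $\theta_{E'}$ terms via Lemma~\ref{cotaR}. The only minor remark is that the factor $h_E$ (rather than $h_{E'}$) already appears in estimate~\eqref{eqb} of Lemma~\ref{lema4}, so the comparison $h_E\sim h_{E'}$ is not strictly needed to obtain the stated bound, though it is indeed a consequence of assumption~$\mathbf{A_1}$.
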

\begin{proof}
It follows immediately from Lemmas~\ref{eficiencia1}--\ref{lema4}.
\end{proof}

The following result establishes that term $h_{E}\|\l \bw-\l_h \bw_h\|_{0,E'}$
which appears in the above estimate is asymptotically negligible
for the global estimator $\eta$ (cf. \eqref{etacald}).

\begin{corollary}
There exists a constant $C>0$ such that
$$
\eta^2\leq C\left[\|\bw-\bw_h\|_{1,\O}^{2}+\sum_{E\in \CT_h}\left(\|\bw-\bpiO\bw_{h}\|_{0,E}^{2}+|\bw-\bpisi  \bw_{h}|_{1,E}^{2}\right)\right].
$$
\end{corollary}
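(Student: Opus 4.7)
The plan is to start from the local efficiency estimate in Theorem~\ref{eficiencia} and sum over all $E \in \CT_h$. Because each element $E'$ belongs to the patch $\omega_E$ of only a uniformly bounded number of neighbors $E$ (a consequence of the mesh regularity assumptions $\mathbf{A_1}$--$\mathbf{A_2}$), the summation produces no combinatorial blow-up. This yields
\begin{equation*}
\eta^2 \le C\left[\|\bw-\bw_h\|_{1,\O}^2 + \sum_{E\in\CT_h}\!\left(\|\bw-\bpiO\bw_h\|_{0,E}^2 + |\bw-\bpisi\bw_h|_{1,E}^2\right) + h^2\|\l\bw - \l_h\bw_h\|_{0,\O}^2\right].
\end{equation*}
So the corollary reduces to showing that the last term $h^2\|\l\bw - \l_h\bw_h\|_{0,\O}^2$ can be absorbed by the other two, which is the only genuinely new point in the argument.

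The next step is to split $\l\bw - \l_h\bw_h = \l(\bw-\bw_h) + (\l-\l_h)\bw_h$ and use the triangle inequality together with the normalization $\|\bw_h\|_{0,\O}=1$ to get
\begin{equation*}
\|\l\bw - \l_h\bw_h\|_{0,\O} \le \l\|\bw-\bw_h\|_{0,\O} + |\l-\l_h|.
\end{equation*}
For the first piece, I would simply use $\|\bw-\bw_h\|_{0,\O}\le\|\bw-\bw_h\|_{1,\O}$; multiplied by $h^2$ (with $h$ bounded by the diameter of $\O$) this contributes harmlessly to the first term on the right-hand side of the claim.

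For the eigenvalue piece, I would invoke the identity \eqref{cotavalor} from Remark~\ref{obserbapos}, which gives
\begin{equation*}
|\l-\l_h| \le C\left[\|\bw-\bw_h\|_{1,\O}^2 + \sum_{E\in\CT_h}\!\left(\|\bw-\bpiO\bw_h\|_{0,E}^2 + |\bw-\bpisi\bw_h|_{1,E}^2\right)\right] =: C\,\Psi_h.
\end{equation*}
Hence $h^2|\l-\l_h|^2 \le C h^2 \Psi_h\cdot|\l-\l_h|$. Since $|\l-\l_h|\to 0$ as $h\to 0$ (cf.\ Theorem~\ref{convibration}), there exists $h_0>0$ such that for all $h<h_0$ the factor $Ch^2|\l-\l_h|$ is smaller than $1$, and thus $h^2|\l-\l_h|^2 \le \Psi_h$. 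Putting these estimates together absorbs the last term into the others and yields the desired bound.

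The main potential obstacle is purely bookkeeping: one must make sure that the overlap constant from the patches $\omega_E$, the constant from Theorem~\ref{eficiencia}, and the constants appearing in \eqref{cotavalor} remain independent of $h$, which they do under $\mathbf{A_1}$--$\mathbf{A_2}$. Beyond that, the absorption argument is standard — the only subtle point is that the statement is genuinely asymptotic, valid for $h$ below a threshold $h_0$, as is typical for a posteriori efficiency results involving squared higher-order terms.
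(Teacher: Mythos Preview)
Your proof is correct and follows essentially the same route as the paper: sum the local efficiency bound from Theorem~\ref{eficiencia}, split $\l\bw-\l_h\bw_h=\l(\bw-\bw_h)+(\l-\l_h)\bw_h$, and control the eigenvalue piece via \eqref{cotavalor}. The only difference is in the absorption of $|\l-\l_h|^2$: the paper uses the trivial bound $|\l-\l_h|^2\le(|\l|+|\l_h|)\,|\l-\l_h|$ and then applies \eqref{cotavalor} to the linear factor, which (since $|\l|+|\l_h|$ is bounded) avoids your asymptotic restriction $h<h_0$ and yields the corollary for all $h$.
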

\begin{proof}
From Theorem~\ref{eficiencia} we have that 
$$
\eta^2\leq C\left[\|\bw-\bw_h\|_{1,\O}^{2}+\sum_{E\in \CT_h}\left(\|\bw-\bpiO\bw_{h}\|_{0,E}^{2}+|\bw-\bpisi  \bw_{h}|_{1,E}^{2}\right)+h^{2}\|\l \bw-\l_h \bw_h\|_{0,\O}^{2}\right].$$
The last term on the right
hand side above is  bounded as follows: 
\begin{align*}
\|\l \bw -\l_h \bw_h\|_{0,\O}^2
&\leq 2 \l^{2}\|\bw-\bw_h\|_{0,\O}^{2}+2|\l-\l_h|^{2}\leq C\|\bw-\bw_h\|_{1,\O}^{2}+2|\l-\l_h|^{2},
\end{align*}
where we have used that $\|\bw_{h}\|_{0,\O}=1$.
Now, using the estimate \eqref{cotavalor},  we have
\begin{align*}
|\l-\l_{h}|^{2}&\leq (|\l|+|\l_{h}|)|\l-\l_{h}|\leq
C\left[\|\bw-\bw_h\|_{1,\O}^{2}+\sum_{E\in \CT_h}\left(\|\bw-\bpiO  \bw_h\|_{0,E}^{2}
+|\bw-\bpisi  \bw_h|_{1,E}^{2}\right)\right].
\end{align*} 
Therefore,
$$
\eta^2\leq C\left[\|\bw-\bw_h\|_{1,\O}^{2}+\sum_{E\in \CT_h}\left(\|\bw-\bpiO \bw_h\|_{0,E}^{2}
+\|\bw-\bpisi  \bw_h\|_{1,E}^{2}\right)\right]
$$
and we conclude the proof.
\end{proof}
\setcounter{equation}{0}
\section{Numerical results}
\label{SEC:NUMER}
We report in this section some numerical examples which have allowed us to assess
the theoretical result proved above. With this aim, we have
implemented in a MATLAB code a lowest-order VEM ($k=1$) on arbitrary
polygonal meshes following the ideas proposed in \cite{BBMR2014}.

To complete the choice of the VEM, we have to choose  the bilinear forms
$S_{\beps}^E(\cdot,\cdot)$ and $S_{0}^E(\cdot,\cdot)$ satisfying \eqref{stabilS}
and \eqref{stabilS0}, respectively. In this respect, we
have proceeded as in \cite[Section 4.6]{BBCMMR2013}: for each polygon $E$ with
vertices $P_1,\dots,P_{N_{E}}$, we have used
\begin{align*}
S_{\beps}^{E}(\bu,\bv)&:=\sigma_{E}\sum_{r=1}^{N_{E}}\bu(P_r)\bv(P_r),
\qquad \bu,\bv\in \bV^{E}_{h1}.\\
S_{0}^{E}(\bu,\bv)&:=\sigma_{E}^{0}\sum_{r=1}^{N_{E}}\bu(P_r)\bv(P_r),
\qquad \bu,\bv\in \bV^{E}_{h1},
\end{align*}
where $\sigma_{E}>0$ and $\sigma_{E}^{0}>0$
are multiplicative factors to take into account the magnitude of the
material parameter, for example, in the numerical tests a possible
choice could be to set $\sigma_{E}>0$ as the mean value of the eigenvalues
of the local matrix  $a^E(\bpisi\bu_h,\bpisi\bv_h)$ and for $\sigma_{E}^{0}>0$
as the mean value of the eigenvalues of the local matrix  $b^E(\bpiO\bu_h,\bpiO\bv_h)$.
This ensure that the stabilizing terms scales as $a^E(\bu_h,\bv_h)$
and $b^E(\bu_h,\bv_h)$, respectively. Finally, we mention that the above definitions
of the bilinear forms $S_{\beps}^{E}(\cdot,\cdot)$ and $S_{0}^{E}(\cdot,\cdot)$
are according with the analysis presented in \cite{MRR2015} in order to avoid spectral pollution.


\subsection{Test 1}

In this numerical test, we have taken an elastic body occupying the two dimensional
domain  $\O:=(0,1)^{2}$, fixed at its bottom $\G_{D}$
and free at the rest of boundary $\G_{N}$.
We have used different families of meshes
and the refinement parameter $N$ used to label
each mesh is the number of elements on each edge (see Figure~\ref{FIG:VM1}):
\begin{itemize}
\item $\CT_h^1$: trapezoidal meshes which consist of partitions
of the domain into $N\times N$ congruent trapezoids taking the
middle point of each edge as a new degree of freedom;
note that each element has 8 edges; 
\item $\CT_h^2$: non-structured hexagonal meshes made of convex hexagons.
\end{itemize}

We recall that the Lam\'e coefficients of a material are defined in
terms of the Young modulus $E_S$ and the Poisson ratio $\nu_S$ as follows:
$\lambda_S:=E_S\nu_S/[(1+\nu_S)(1-2\nu_S)]$ and $\mu_S:=E_S/[2(1+\nu_S)]$.
We have used the following physical parameters: density:
$\varrho=7.7\times 10^{3}$ kg/m$^{3}$, Young modulus:
$E_{S}=1.44\times 10^{11}$ Pa and Poisson ratio: $\nu_{S}=0.35$.

We observe that the eigenfunctions of this problem may present singularities at the points
where the boundary condition changes from Dirichlet ($\G_{D}$) to Neumann ($\G_{N}$).
According to \cite{G2}, for $\nu_{S}=0.35$, the estimate in Lemma~\ref{LEM:REG}(i)
holds true in this case for all $r < 0.6797$.
Therefore, the theoretical order of convergence for the vibration frequencies
presented in Theorem~\ref{convibration} is $2r\geq 1.36$
(see \cite{MMR2013} for further details).

\begin{figure}[H]
\begin{center}
\begin{minipage}{6.3cm}
\centering\includegraphics[height=4.1cm, width=4.1cm]{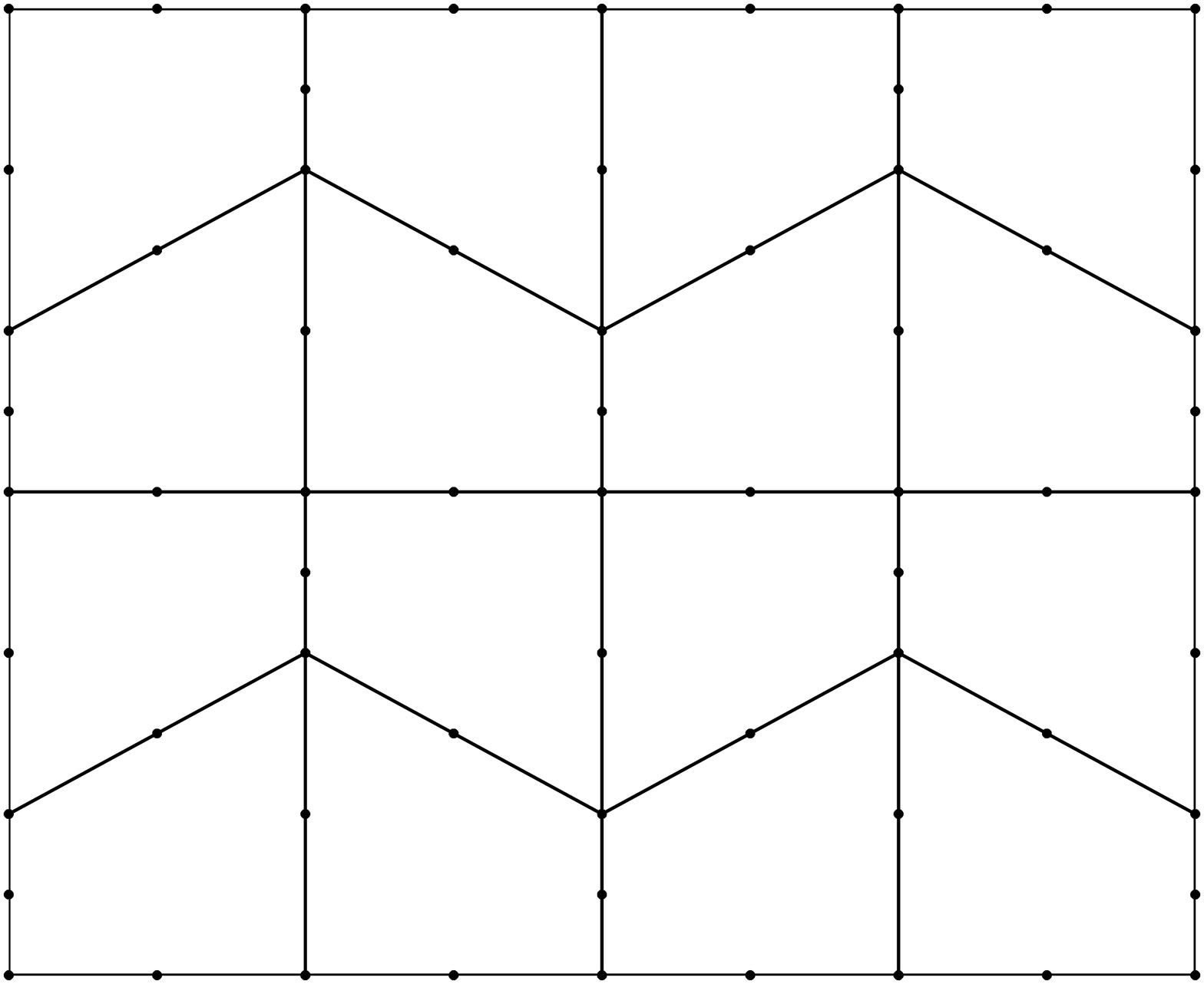}
\end{minipage}
\begin{minipage}{6.3cm}
\centering\includegraphics[height=4.1cm, width=4.1cm]{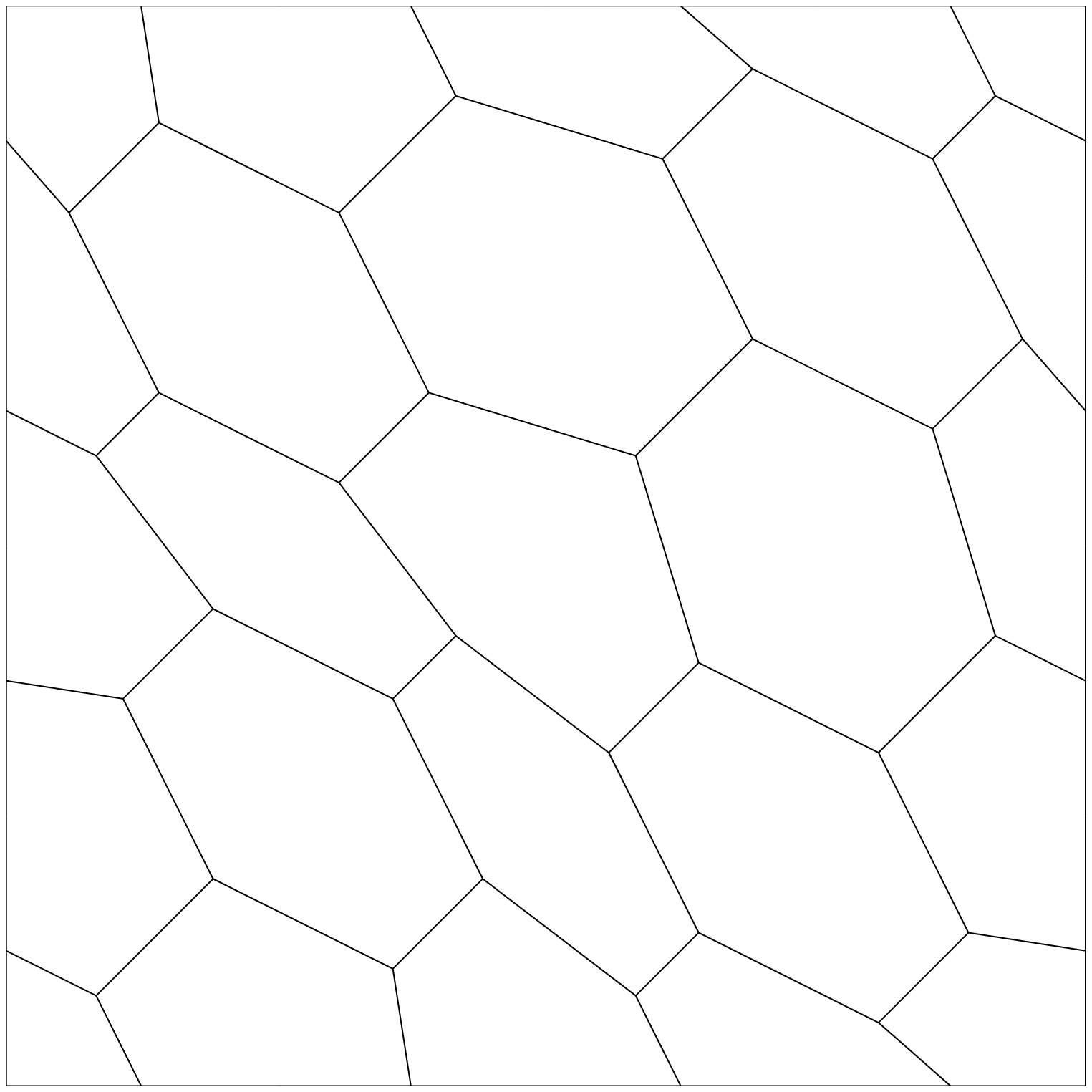}
\end{minipage}
\caption{ Sample meshes: $\CT_h^1$ and $\CT_h^2$ with $N=4$, respectively.}
 \label{FIG:VM1}
\end{center}
\end{figure}

We report in Table~\ref{TABLA:1}
the lowest vibration frequencies $\omega_{hi}:=\sqrt{\l_{hi}}$, $i=1,\ldots,6$
computed with the method analyzed in this paper. The table also includes
estimated orders of convergence, as  well as more accurate values of the
vibration frequencies extrapolated from the computed by means of
a least-squares fitting. Moreover, we compared our results with those
obtained in \cite{MMR2013} with a stress-rotation mixed formulation
of the elasticity system and a mixed Galerkin method based on AFW element.
With this aim, we include in the last column of Table~\ref{TABLA:1}
the values obtained by extrapolating those reported in \cite[Table~1]{MMR2013}.

\begin{table}[H]
\begin{center}
\caption{Test 1. Components lowest vibration frequencies $w_{hi}$, $i=1,\ldots,6$  on different meshes.}
\begin{tabular}{|c|c|c|c|c|c|c|c|c|c|}
\hline
  & Mesh&  $N=16$   & $N=32$ & $N=64$ & $N=128$ & Order &Extrapolated & \cite{MMR2013} \\
\hline
$\omega_{h1}$  & &2977.026  &  2955.750  &2948.391   &  2945.748 &  1.52    & 2944.387  &2944.295\\
$\omega_{h2} $ &  &  7386.910 &   7362.542  &  7353.758 &  7350.500 &    1.46   & 7348.674  &7348.840\\
$\omega_{h3}$  &$\CT_h^1$   &7992.109   & 7910.264  &   7888.147 &    7881.905    &   1.88  & 7879.746 &7880.084 \\
$\omega_{h4}$ & &13100.223 &  12838.752 &  12770.544 &  12752.434  & 1.93 & 12746.013   & 12746.802 \\
$\omega_{h5}$  &  &13289.395  & 13122.017 &  13072.453 &  13057.320 &  1.75 & 13051.220& 13051.758  \\
$\omega_{h6} $   & &15209.829 &  14975.380 &  14912.534 &  14895.790  & 1.90 &  14889.584 &14890.114\\
        \hline
  $\omega_{h1}$  & &2975.103  &  2955.754   &2948.274   &  2945.671 &  1.41    & 2943.964  &2944.295\\
$\omega_{h2} $ &  &  7383.823 &  7361.103  &  7353.189 &  7350.322 &    1.51   & 7348.834  & 7348.840\\
$\omega_{h3}$  &$\CT_h^2$   &8030.199    & 7921.047  &   7890.623 &   7882.914    &   1.87  & 7879.671 &7880.084 \\
$\omega_{h4}$ & &13174.876 &  12866.230  &  12778.890 &  12755.157   & 1.83 & 12745.302   & 12746.802 \\
$\omega_{h5}$  &  &13379.938   & 13149.980 &  13078.614&  13059.361 &  1.72 & 13049.282& 13051.758  \\
$\omega_{h6} $   & &15311.428 &  14997.597 &  14919.473 &  14897.987  & 1.98 &  14891.639 &14890.114\\
        \hline

\end{tabular}
\label{TABLA:1}
\end{center}
\end{table}

It can be seen from Table~\ref{TABLA:1}
that the eigenvalue approximation order
of our method is quadratic and that the results
obtained by the two methods agree perfectly well.
Let us remark that the theoretical order of convergence
($2r\geq 1.36$) is only a lower bound, since the actual order of
convergence for each vibration frequency depends on
the regularity of the corresponding eigenfunctions.
Therefore, the attained orders of convergence are in some cases
larger than this lower bound.

\subsection{Test 2}

The aim of this test is to assess the performance of the adaptive
scheme when solving a problem with a singular solution.
Let $\O:=[-0.75,0.75]^{2}\backslash[-0.5, 0.5]^{2}$
which corresponds to a two-dimensional closed vessel with vacuum inside.
The boundary of the elastic body is the union of $\G_{D}$ and $\G_{N}$: the solid is
fixed along $\G_{D}$ and free of stress along $\G_{N}$;
let $\boldsymbol{n}$ the  unit outward normal vector along
$\G_{N}$ (see Figure~\ref{FIG:interac}).
\begin{figure}[H]
\begin{center}
\input{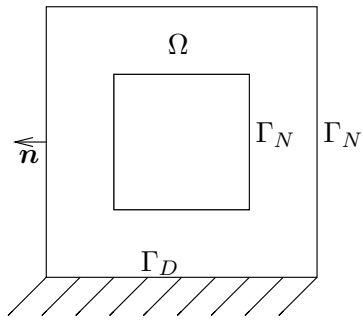}
\caption{Solid Domain.}
\label{FIG:interac}
\end{center}
\end{figure}
We have used the following physical parameters: density: $\varrho=1$ kg/m$^{3}$,
Young modulus: $E_{S}=1$ Pa and Poisson ratio: $\nu_{S}=0.35$.

In this numerical tests we have initiated the adaptive
process with a coarse triangular mesh. In order to compare the performance of
VEM with that of a the finite element method (FEM), we have used
two different algorithms to refine the meshes.
The first one is based on  a classical FEM strategy for which all
the subsequent  meshes consist of triangles. In such a case,
for $k=1$, VEM reduces to FEM. The other procedure to refine the
meshes is described in \cite{BMm2as}. It consists of splitting
each element into $n$ quadrilaterals ($n$ being the number of edges of the polygon) by connecting the
barycenter of the element with the midpoint  of each edge as shown
in Figure~\ref{FIG:cero} (see \cite{BMm2as} for more details).
Notice that although this process is initiated with a mesh of triangles,
the successively created meshes will contain other kind of convex  polygons as can be seen in Figure~\ref{FIG:VM35}.

\begin{figure}[H]
\centering
\subfigure[Triangle $E$ refined into 3 quadrilaterals.]{\includegraphics[height=3.6cm, width=3.6cm]{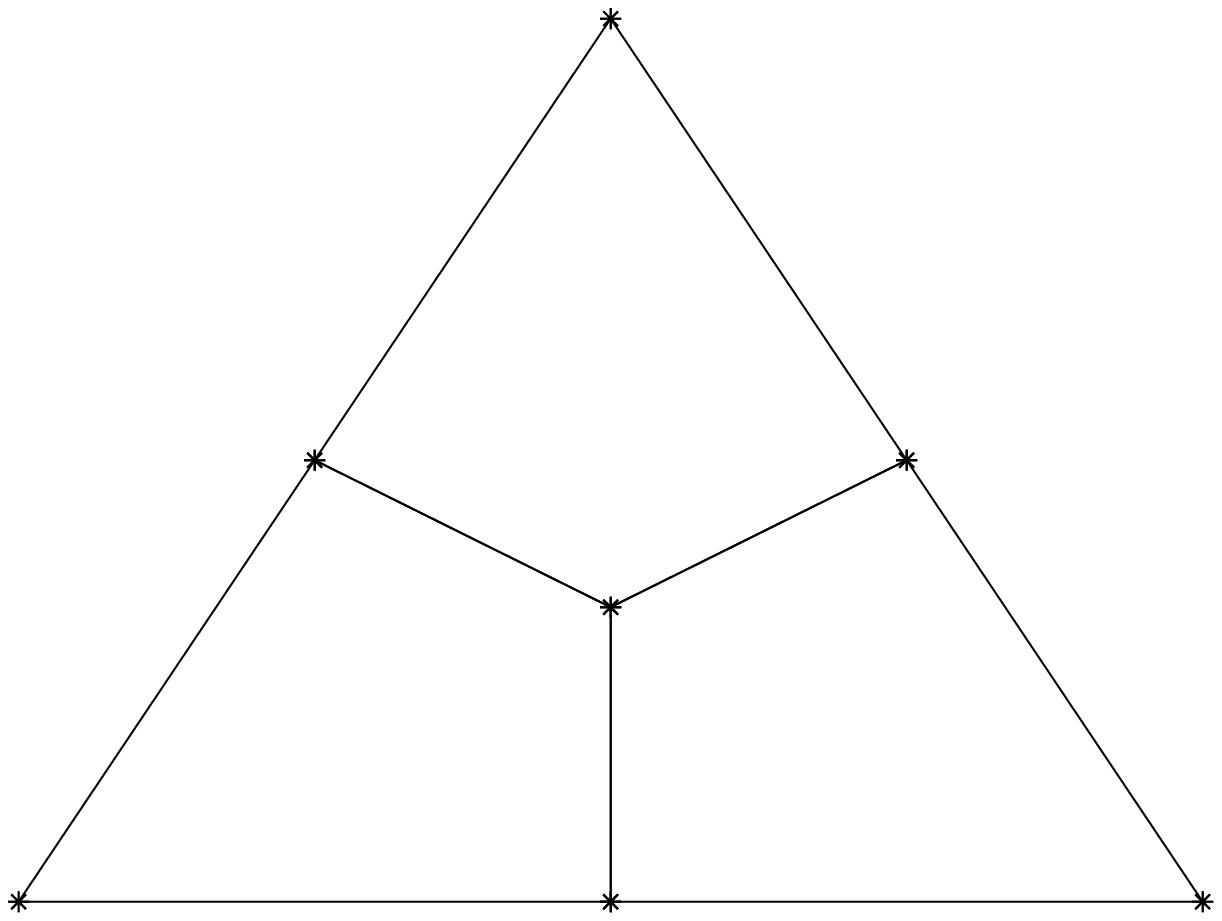}}
\hspace{0.5cm}\subfigure[Pentagon $E$ refined into 5 quadrilaterals.]{\includegraphics[height=3.6cm, width=3.6cm]{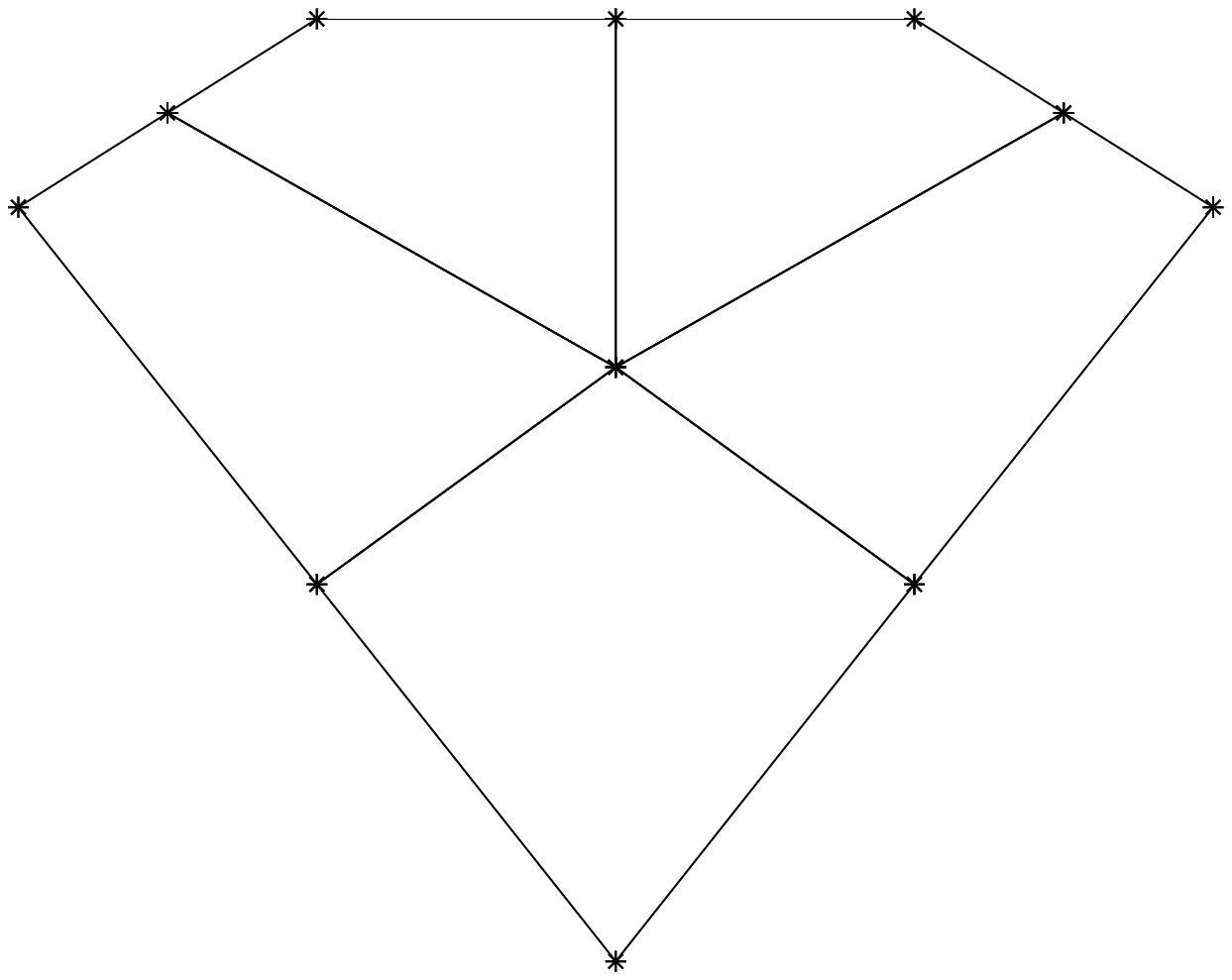}}
\caption{ Example of refined  elements for VEM strategy.}
 \label{FIG:cero}
\end{figure}

We have used the two refinement procedures (VEM and FEM)
described above. Both schemes are based on the strategy of refining those elements $E$ which satisfy
$$\eta_{E}\geq 0.5 \max_{E'\in \CT_{h}}\{\eta_{E'}\}.$$

Let us remark that in the case of triangular meshes, since
$\bV_{h1}^{E}=[\bbP_{1}(E)]^{2}$ and hence $\bpisi $ and $\bpiO$ are the identity, the term
$\theta_{E}^2$ (see \eqref{RK}) vanishes, by  the same reason, the projection $\bpisi $ also disappears
in the definition  \eqref{saltocal} of $J_{\ell}$ and $R_{E}$ in \eqref{etaK1}
reduces to $R_{E}^{2}=h_{E}^{2}\|\l_{h}\varrho\bw_{h}\|_{0,E}^{2}$.

The eigenfunctions of this problem may present singularities 
at the points where the boundary condition changes from Dirichlet
($\G_{D}$) to Neumann ($\G_{N}$) as well as at the reentrant angles of the domain
According to \cite{G2}, in this case, the estimate in Lemma~\ref{LEM:REG}(i)
holds true in this case for all $r < 0.5445$.
Therefore, in case of uniformly refined meshes,
the theoretical convergence rate for the eigenvalues should be
$|\l-\l_{h}|\simeq \mathcal{O}\left(h^{1.08}\right)\simeq\mathcal{O}\left(N^{-0.54}\right)$,
where $N$ denotes the number of degrees of freedom.
Now, an efficient adaptive scheme should lead to refine the meshes
in such a way that the optimal order $|\l-\l_{h}|\simeq\mathcal{O}\left(N^{-1}\right)$
could be recovered.

Figures \ref{FIG:VM351} and \ref{FIG:VM35} show the adaptively refined
meshes obtained with FEM and VEM procedures, respectively.
\begin{figure}[H]
\begin{center}
\begin{minipage}{4.0cm}
\centering\includegraphics[height=4.0cm, width=4.0cm]{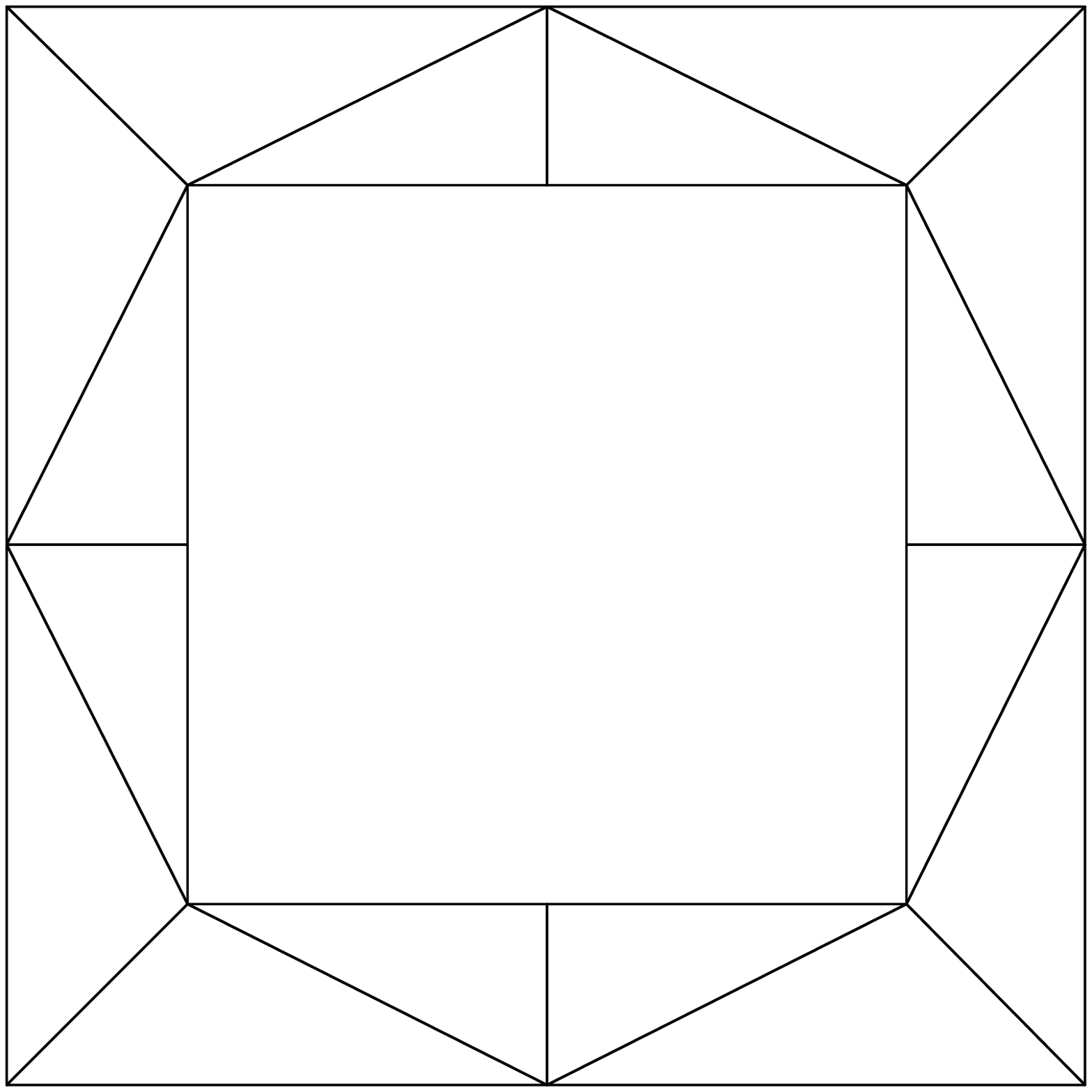}
\end{minipage}
\begin{minipage}{5.2cm}
\centering\includegraphics[height=5.2cm, width=5.2cm]{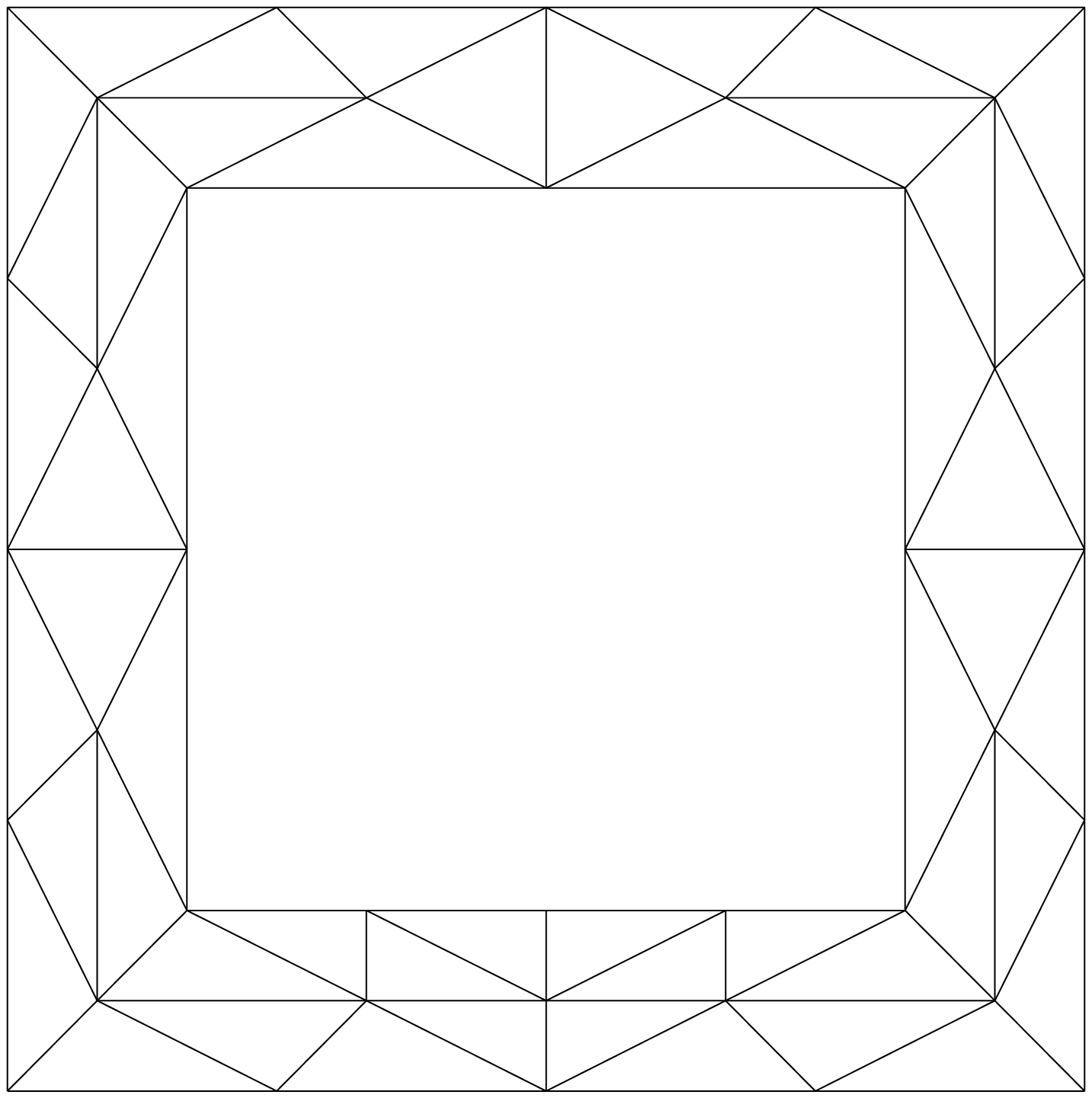}
\end{minipage}
\begin{minipage}{5.2cm}
\centering\includegraphics[height=5.2cm, width=5.2cm]{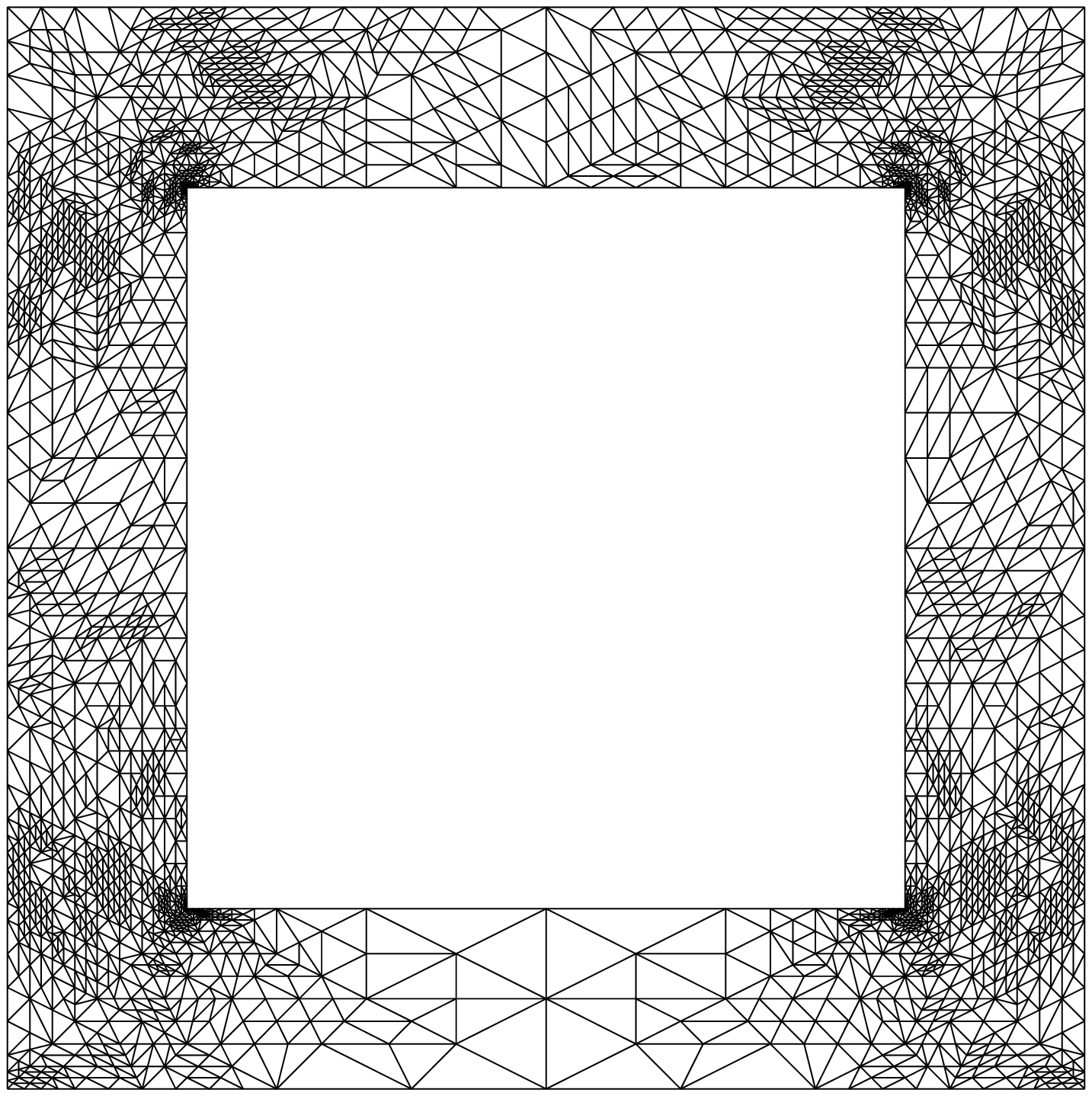}
\end{minipage}
\caption{Adaptively refined meshes obtained whit FEM scheme at refinement steps 0, 1 and 8.}
\label{FIG:VM351}
\end{center}
\end{figure}
\begin{figure}[H]
\begin{center}
\begin{minipage}{4.0cm}
\centering\includegraphics[height=4.1cm, width=4.1cm]{INICIAL.eps}
\end{minipage}
\begin{minipage}{5.2cm}
\centering\includegraphics[height=5.1cm, width=5.1cm]{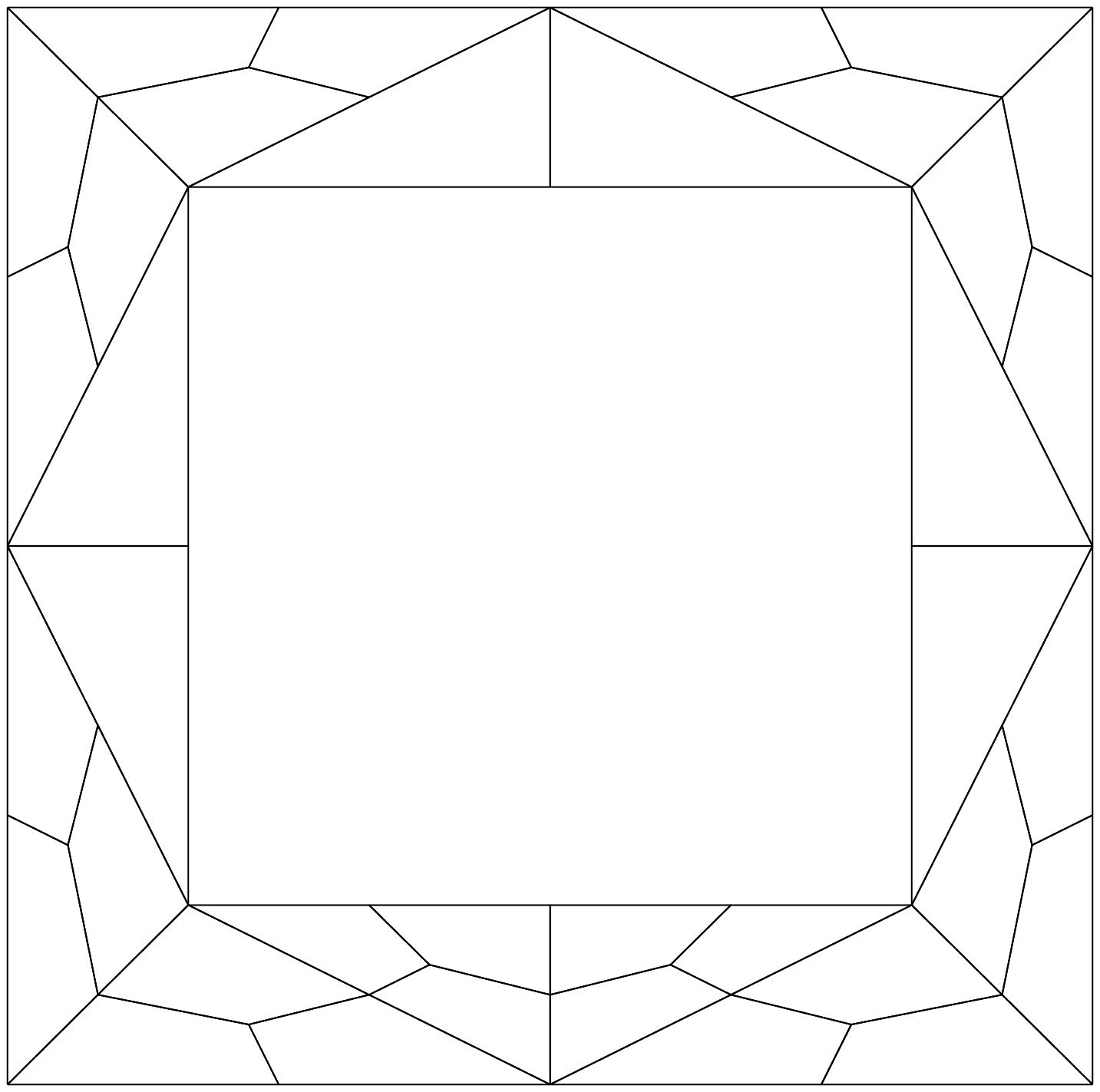}
\end{minipage}
\begin{minipage}{5.2cm}
\centering\includegraphics[height=5.1cm, width=5.1cm]{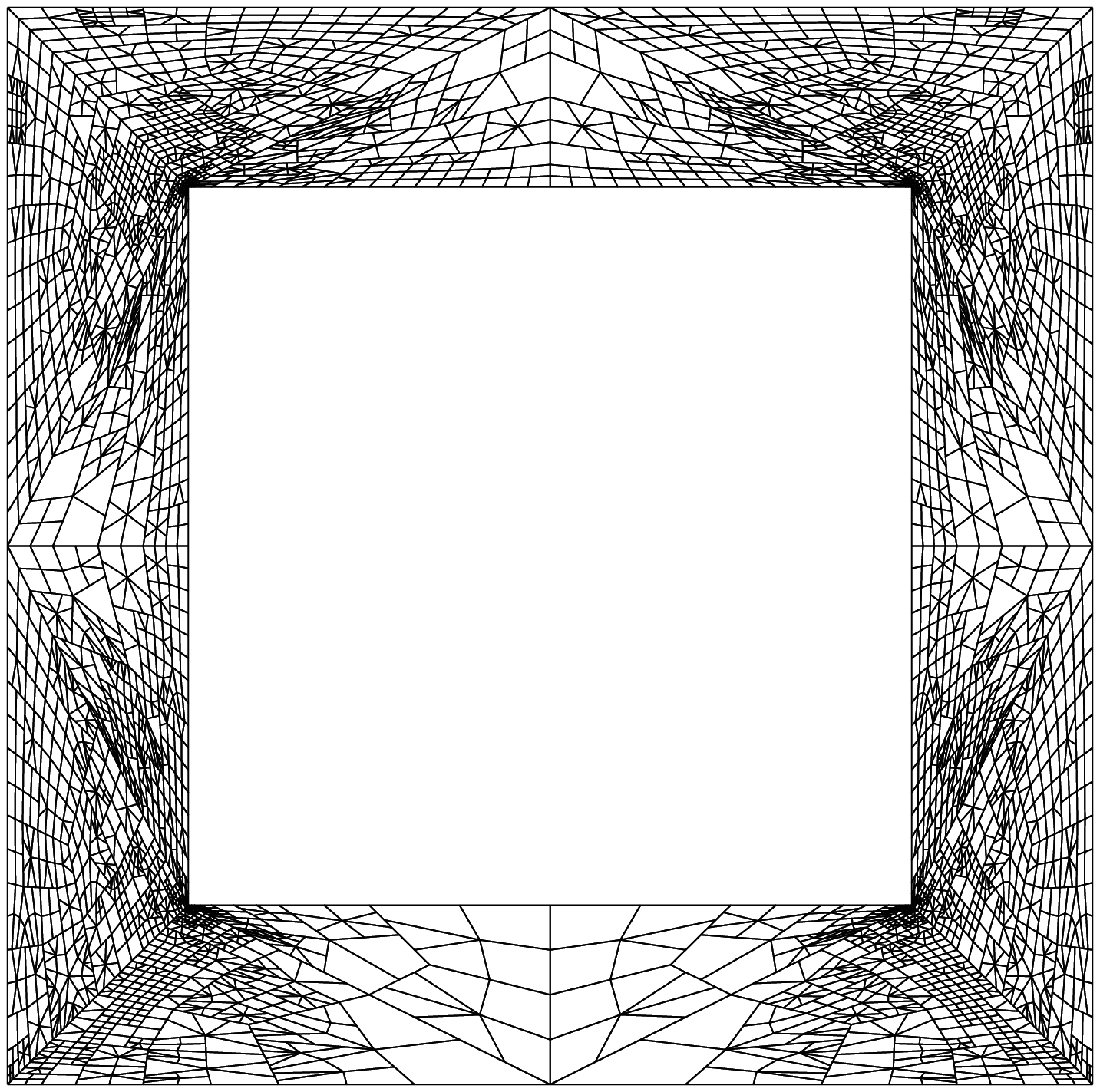}
\end{minipage}
\caption{Adaptively refined meshes obtained whit VEM scheme at refinement steps 0, 1 and 8.}
\label{FIG:VM35}
\end{center}
\end{figure}

In order to compute the errors $|\l_{1}-\l_{h1}|$,
due to the lack of an exact eigenvalue, we have used an approximation
based on a least squares fitting of the computed values obtained with
extremely refined meshes. Thus, we have obtained the value $\omega_{1}=\sqrt{\l_{1}}=0.1538$,
which has at least four correct significant digits.

We report in Table \ref{TABLA:5} the lowest vibration frequency
$\omega_{h1}$ on uniformly refined meshes and adaptive refined meshes
with FEM and VEM schemes. Each table includes the estimated convergence rate.

\begin{minipage}[t]{16.0cm}
\begin{table}[H]
\begin{center}
\caption{Test 2. frequency   $\omega_{h1}$ computed with different schemes:  uniformly refined meshes (``Uniform FEM''), adaptively refined meshes with  FEM (``Adaptive FEM'') and adaptively refined meshes with VEM (``Adaptive VEM'').}
\begin{tabular}{|c|c||c|c||c|c||c|c|}
  \hline
    \multicolumn{2}{|c||}{Uniform FEM}& \multicolumn{2}{|c||}{Adaptative FEM} &  \multicolumn{2}{|c||}{Adaptative VEM}  \\
    \hline
     $N$ & $\omega_{h1}$  &   $N$ & $\omega_{h1}$  &       $N$ & $\omega_{h1}$ \\
\hline
 136 &   0.2095   &  136  &  0.2095  &   136  &   0.2095\\ 
  390  &  0.1758  &  300 &   0.1810  &    340  &  0.1718\\  
  1418 &  0.1625 & 806  &  0.1659   &   646  &   0.1626\\
 5366  & 0.1567  &  1806  & 0.1599 &   1498 &  0.1574 \\
  20642 & 0.1551&  2946 &  0.1577 &   2942  & 0.1557 \\  
 80982  & 0.1543& 4198 &  0.1563  &  4788 &  0.1550  \\ 
            &             &  6348 &  0.1554 &   7782  & 0.1545 \\
           &              & 9000   &0.1549 &  12530& 0.1543 \\
           &              &  12894 &  0.1545 & 19398 &  0.1541  \\
          &               &18244 &0.1543 &  &      \\
         &                &    26760 & 0.1541 &  &     \\
    \hline 
     Order   &$\mathcal{O}\left(N^{-0.73}\right)$&    Order  &$\mathcal{O}\left(N^{-0.98}\right)$ &   Order   & $\mathcal{O}\left(N^{-1.0}\right)$\\
       \hline
       $\omega_1$  &0.1538 &      $\omega_1$  &0.1538 &   $\omega_1$  &0.1538\\
     \hline
    \end{tabular}
\label{TABLA:5}
\end{center}
\end{table}
\end{minipage}

\begin{figure}[H]
\begin{center}\begin{minipage}{8.0cm}
\centering\includegraphics[height=8.0cm, width=8.0cm]{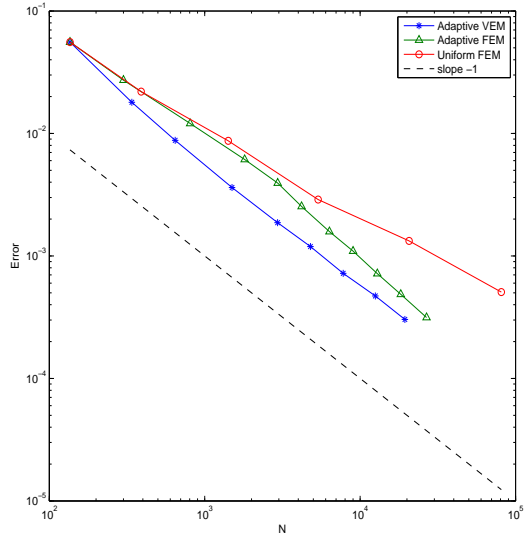}
\end{minipage}
\caption{Test2. Error curves of $|w_{1}-w_{h1}|$ for uniformly refined meshes
(``Uniform FEM''), adaptively refined meshes with
FEM (``Adaptive FEM'') and adaptively refined meshes with VEM (``Adaptive VEM'').}
\label{ero234}
\end{center}
\end{figure}

It can be seen from Figure~\ref{ero234} that the four refinement
schemes lead to the correct convergence rate. Moreover, the performance of adaptive VEM is slightly better than that of adaptive FEM.

We report in  Table \ref{TABLA:4}, the error $|w_{1}-w_{h1}|$
and the estimators $\eta^{2}$ at each step of the adaptative VEM scheme.
We include in the table the terms $\theta^{2}:=\sum_{E\in\CT_{h}}\theta_{E}^{2}$ which arise from the inconsistency of VEM, $R^{2}:=\sum_{E\in\CT_{h}}R_{E}^{2}$ which arise from the volumetric residuals and $J^{2}:=\sum_{E\in\CT_{h}}\left(\sum_{\ell\in\CT_{h}}h_{E}||J_{\ell}||_{0,\ell}^{2}\right)$ which arise from the edge residuals. We also report in the table the effectivity indexes $\dfrac{|\omega_{1}-\omega_{h1}|}{\eta^{2}}$.

\begin{table}[H]
\begin{center}
\caption{Test 2. Components of the error estimator and effectivity indexes on the adaptively refined meshes with VEM.}
\begin{tabular}{|c|c|c|c|c|c|c|c|c|}
\hline
$N$   & $\omega_{h1}$ &  $|\omega_{1}-\omega_{h1}|$   & $R^{2}$ & $\theta^{2}$ & $J^{2}$ &  $\eta^{2}$ & $\dfrac{|\omega_{1}-\omega_{h1}|}{\eta^{2}}$ \\
\hline
 136    & 2.095e-01 &  5.570e-02  & 2.795e-05 &  0       &     1.643e-01  & 1.643e-01  & 3.390e-01\\
   340   &  1.718e-01 &  1.797e-02 &  1.028e-05 &  2.244e-03&   3.501e-02 &  3.726e-02  & 4.823e-01\\
   646   &  1.626e-01 &  8.792e-03  & 4.353e-06   &1.874e-03  & 1.777e-02 &  1.965e-02   &4.475e-01\\
   1498 &   1.574e-01  & 3.623e-03 &  2.520e-06 &  9.645e-04&   7.441e-03 &  8.408e-03  & 4.309e-01\\
   2942  &  1.557e-01 &  1.872e-03  & 1.039e-06   &5.414e-04  & 4.348e-03 &  4.891e-03  & 3.827e-01\\
   4788  &  1.550e-01  & 1.194e-03 &  6.433e-07&   3.864e-04 &  2.883e-03 &  3.270e-03  & 3.652e-01\\
   7782  &  1.545e-01  & 7.216e-04  & 4.495e-07  & 2.472e-04  & 2.007e-03 &  2.255e-03  & 3.200e-01\\
   12530 &  1.543e-01 &  4.712e-04  & 2.894e-07   &1.682e-04 &  1.367e-03 &  1.536e-03  & 3.068e-01\\
   19398 &  1.541e-01 &  3.030e-04&   1.845e-07  & 1.155e-04  & 9.524e-04  & 1.068e-03 &  2.837e-01\\
 \hline
\end{tabular}
\label{TABLA:4}
\end{center}
\end{table}
It can be seen from the Table~\ref{TABLA:4} that the effectivity
indexes are bounded above and below far from zero and the inconsistency
and edge residual terms are roughly speaking of the same order,
none of them being asymptotically negliglible.

\section*{Acknowledgments}

The first author was partially supported by CONICYT-Chile through
FONDECYT project 1140791 and by DIUBB through project 171508 GI/VC,
Universidad del B\'io-B\'io, (Chile).
The second author was partially supported by a CONICYT-Chile
through FONDECYT initiation project 111170534.
%
The authors are deeply grateful Prof. Rodolfo Rodr\'iguez (Universidad de
Concepci\'on) for the fruitful discussions.


\bibliographystyle{amsplain}

\end{document}